\newtheorem{lemma}{Lemma}[section]
\newtheorem{remark}{Remark}[section]
\newtheorem{theorem}{Theorem}
\newtheorem{corollary}{Corollary}
\newtheorem{example}{Example}
\newtheorem{definition}{Definition}
\newtheorem{proposition}{Proposition}
\newtheorem*{ack}{Acknowledgements}
\def\<{\langle}
\def\>{\rangle}
\def\b{\beta}
\newcommand{\grad}{\mathop{\mathrm{grad}}\nolimits}
\def\bbc{{\mathbb C}}
\def\bbf{{\mathbb F}}
\def\bbn{{\mathbb N}}
\def\bbp{{\mathbb P}}
\def\bbq{{\mathbb Q}}
\def\bbr{{\mathbb R}}
\def\bbz{{\mathbb Z}}
\def\bfw{{\bf w}}
\def\bfH{{\bf H}}
\def\cald{{\mathcal D}}
\def\call{{\mathcal L}}
\def\calo{{\mathcal O}}
\def\gt{{\mathfrak t}}
\def\gz{{\mathfrak z}}
\def\lra{\longrightarrow}
\def\<{\langle}
\def\>{\rangle}
\def\ra {\rightarrow}
\def\b{\beta}
\newcommand{\hirz}{\mathfrak{n}}
\def\kt{\mathfrak{t}}
\def\hook{\mathbin{\hbox to 6pt{%
                 \vrule height0.4pt width5pt depth0pt
                 \kern-.4pt
                 \vrule height6pt width0.4pt depth0pt\hss}}}
\begin{document}

\title{Twins in K\"ahler and Sasaki geometry}
\author[C.P. Boyer]{Charles P. Boyer}
\author[H. Huang]{ Hongnian Huang}
\author[E. Legendre]{Eveline Legendre}
\author[C.W. T{\o}nnesen-Friedman]{Christina W. T{\o}nnesen-Friedman}

\address{Charles P. Boyer, Department of Mathematics and Statistics,
University of New Mexico, Albuquerque, NM 87131.}
\email{cboyer@unm.edu} 
\address{Hongnian Huang, Department of Mathematics and Statistics,
University of New Mexico, Albuquerque, NM 87131.}
\email{hnhuang@gmail.com} 
\address{Eveline Legendre\\ Universit\'e Claude Bernard Lyon 1\\
Institut Camille Jordan \'equipe AGL\\ 21 av. Claude Bernard\\
69100 Villeurbanne\\ France}
\email{eveline.legendre@math.univ-lyon1.fr}
\address{Christina W. T{\o}nnesen-Friedman, Department of Mathematics, Union
College, Schenectady, New York 12308, USA } \email{tonnesec@union.edu}

\thanks{The first author was partially supported by grant \#519432 from the Simons Foundation. The third author is partially supported by France ANR project BRIDGES No ANR-21-CE40-0017. }

\begin{abstract}
We introduce the notions of weighted extremal K\"ahler twins together with the related notion of extremal Sasaki twins. In the K\"ahler setting this leads to a generalization of the twinning phenomenon appearing among LeBrun's strongly Hermitian solutions to the Einstein-Maxwell equations on the first Hirzebruch surface \cite{Leb16} to weighted extremal metrics on Hirzebruch surfaces in general. We discover that many twins appear and that this can
be viewed in the Sasaki setting as a case where we have more than one extremal ray in the Sasaki cone even when we do not allow changes within the isotopy class. We also study extremal Sasaki twins directly in the Sasaki setting with a main focus on the toric Sasaki case.
\end{abstract}

\date{\today}

\maketitle

\markboth{Weighted Extremal Twins}{C. Boyer, H. Huang, E. Legendre and C. T{\o}nnesen-Friedman}

{
  \hypersetup{linkcolor=black}
  \tableofcontents
}

\section{Introduction}\label{intro}

The purpose of this paper is to introduce and explore the existence of what we call {\it weighted extremal K\"ahler twins} in which a K\"ahler metric is weighted extremal in the sense of Apostolov-Calderbank \cite{ApCa18} with respect to some fixed weight $p$ but with respect to two different smooth positive Killing potentials that are not scalar multiples of each other. More precisely, recall that for a given $p\in {\mathbb R}$, a Kähler manifold $(N^n,J,g,\omega)$ with a fixed Killing potential $f$ (i.e $\call_{\nabla^gf}J=0$) is said to be a {\it $(f,p)$-extremal K\"ahler structure} if the $(f,p)$-\emph{scalar curvature}, that is
\begin{equation}\label{weightedscal}
Scal_{f,p}(g):= f^2 Scal(g)  -2(p-1) f\Delta_g f - p(p-1)|df|^2_g,
\end{equation} is a Killing potential, \cite{ApCa18}.  This is a direct extension of the now classical extremal K\"ahler metrics of Calabi \cite{cal82} which correspond to the case $(f,p)=(1,p)$ for any $p$. A natural question arises: {\it Is it possible to have a K\"ahler metric which is $(f,p)$-extremal with respect to more than one positive Killing potential, $f$, which are not rescales of each other?} As we shall see in this paper, the answer is a resounding ``Yes'' and the {\it twin} structures obtained this way encompass many well known K\"ahler and Hermitian surfaces like the Page metric \cite{Pag} and many other examples eg. \cite{Leb10,Leb15}.\\

We thus introduce the notion of {\it weighted extremal K\"ahler twins} as follows.   

\begin{definition}\label{twinsKahler}
Let $(N,J,g,\omega)$ be a K\"ahler manifold of complex dimension $n$ and let $p\in \bbr$. Suppose that $f_1,f_2$ are two positive, smooth Killing potentials on $(N,J,g,\omega)$ that are not constant rescales of each other. 
If $g$ is simultaneously $(f_1,p)$-extremal and $(f_2,p)$ extremal then we say that $(g,f_1)$ and $(g,f_2)$ are $p$-\emph{weighted extremal K\"ahler twins}. 
\end{definition}
Our interest in weighted extremal K\"ahler twins comes essentially from two geometric cases where each twin corresponds to a genuine "metric structure" either 
Hermitian or Sasakian:
\begin{itemize}
\item When $p=2n$, $Scal_{f,2n}(g)$ computes the scalar curvature of the Hermitian metric $h = f^{-2} g$. In particular, the case $n=2$ and $p=4$ is of great interest due
to a discovery by LeBrun. 
If $Scal_{f,4}(g)$ is constant, this corresponds to the strongly Hermitian solutions of the Einstein-Maxwell equations \cite{Leb10,Leb15} 
\begin{equation}\label{EM}
dF=0, \qquad d\star F=0, \qquad ({\rm Ric}_h +F\circ F)_0=0
\end{equation}
where $F$ is a smooth 2-form and the subscript $0$ denotes the trace free component. Hence, $F$ is just an harmonic 2-form that satisfies the 3rd equation. On the trivial and first Hirzebruch surfaces, LeBrun found many such solutions (see \cite{Leb15, Leb16}) among which the Page metric \cite{Pag} on the first Hirzebruch surface is a special case.

\item  When $p=n+2$ and $(J,g,\omega)$ is the transversal K\"ahler structure of a $(2n+1)$-dimensional Sasaki manifold $(M, \cald, J,\xi_1)$ in the sense that $\omega=d\eta$ where $\eta$ is a contact $1$-form with Reeb vector field $\xi_1$ and such that $(\cald=\ker \eta, J)$ is a CR structure, then each $\xi_1$-invariant positive Killing potential $f$ determines a new Sasaki structure $(M, \cald, J,\xi_f)$ with the same underlying CR-structure but another Reeb vector field, $\xi_f$ such that $\xi_f$ and $\xi_1$ commute. We refer to \cite{ApCa18} where it is proven that $g$ is a {\it $(f,n+2)$-extremal K\"ahler metric} if and only if $\xi_f$ is the Reeb vector field of an extremal Sasaki structure in the sense of \cite{BGS06}. 

\end{itemize} 

This last situation leads to the following natural notion. 
\begin{definition}\label{twinsSasaki}
Let $(M,\cald, J)$ be a pseudo-convex CR structure of Sasaki type on a compact smooth manifold $M$ of dimension $2n+1$. \emph{Extremal Sasaki twins} of $(\cald,J)$ are two extremal Sasaki structures both compatible with $(\cald,J)$ and having commuting non-colinear Sasaki-Reeb vector fields. 
\end{definition}

Note that for a compact K\"ahler manifold $(N,J,g,\omega)$ with an integer K\"ahler class $[\omega]$, we can form - via the Boothby-Wang construction - a regular Sasaki structure on the circle bundle over $N$ corresponding to the class $[\omega]$, so a $(f,n+2)$-extremal K\"ahler metric on $N$ corresponds to an extremal Sasaki structure which might be irregular. Moreover, the two examples above coincide when $n=2$, so for $p=4$, weighted extremal K\"ahler twins on a Hodge compact K\"ahler surface correspond to both Sasaki extremal metrics with the same underlying CR-structure and strongly Hermitian solutions of the Einstein-Maxwell equation.\\

In the K\"ahler setting, the study of twins have their origin in LeBrun's study of the Einstein-Maxwell equations \cite{Leb10,Leb15,Leb16}, where a curious twinning phenomenon appear for certain K\"ahler classes on the first Hirzebruch surface. In the Sasaki setting, while it is well known \cite{BGS06} that the set of extremal Sasakian structures is open when one can deform within the isotopy classes, the existence of twins which occurs when the CR structures are fixed appears, with some special case (Bochner flat \cite{Bry01,ApCaGa06}) exceptions, to be a discrete phenomenon and it is precisely this that we wish to study.

\begin{remark}
  More generally, given a compact torus $T$ and a $T$-Hamiltonian K\"ahler manifolds $(N, \omega,J,g)$ with a momentum map $\mu: M\ra P$ and moment polytope $P$, one can extend the notion above to more general {\it weighted K\"ahler twins} as a pair of triplets $(\omega, v,w)$, $(\omega, \tilde{v},\tilde{w})$ of respectively $(v,w)$-extremal K\"ahler metrics and $(\tilde{v},\tilde{w})$-extremal K\"ahler metrics in the sense of Lahdili \cite{Lah19} where, here, $v,w,\tilde{v},\tilde{w}$ are smooth functions on $P$. For example, any $v$-soliton admits a twin \cite{ApJuLa21}. To that extent, the notion seems very flexible while the occurrence of $p$-weighted extremal K\"ahler twins appears to be scarce.  
\end{remark}

We first explore the notion of twins on {\it Hirzebruch surfaces} which are compact, complex surfaces given as a projective bundle over $\bbc\bbp^1$ of the form $\bbf_{\hirz} =\bbp(\calo\oplus\calo(\hirz))$ for $\hirz\in \bbn$. As we recall in \S\ref{hirze} below, up to a dilatation, K\"ahler classes of the $\hirz$-th Hirzebruch surface $\bbf_{\hirz}$ are conveniently parameterized by $x\in (0,1)$ and each of these classes contains an important family of explicit K\"ahler metrics called {\it admissible}. In each class, one of these admissible metrics is the extremal K\"ahler metric originally build by Calabi \cite{cal82}.  

\begin{theorem}
Every Hirzebruch surface has an open, non-empty sub-cone of the K\"ahler cone such that every K\"ahler class in this sub-cone
admits a non-trivial pair of weighted extremal K\"ahler twins $(g, f_1)$ and $(g, f_2)$ of weight 4. For the trivial, first, and second Hirzebruch surfaces, this sub-cone is the entire K\"ahler cone.

Every Hirzebruch surface has a K\"ahler metric with integer primitive K\"ahler class, such that the Boothby-Wang constructed Sasaki manifold above this K\"ahler manifold contains a pair of non-trivial extremal Sasaki twins. 

Moreover, each admissible extremal metric on a Hirzebruch surface admits at most one extremal twin and the Sasaki-Einstein structure on the circle bundle corresponding to the canonical bundle of a Hirzebruch surface admits no twin.
\end{theorem}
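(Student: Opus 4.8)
The plan is to reduce everything to the one–variable \emph{admissible} (Calabi ansatz) framework of \S\ref{hirze}, in which a torus–invariant Kähler metric on $\bbf_\hirz$ is encoded by a single \emph{momentum profile} $\Theta(z)>0$ on $(-1,1)$ subject to the boundary conditions $\Theta(\pm 1)=0$ and prescribed $\Theta'(\pm 1)$ that guarantee smooth compactification, while the Kähler class is recorded by the parameter $x\in(0,1)$ (equivalently a constant $r$ built from $x$ and $\hirz$). In this framework every invariant positive Killing potential is affine, $f=1+az$, so a twin amounts to a single profile $\Theta$ that is $(f,4)$–extremal for two distinct affine potentials $f_i=1+a_iz$ with $a_1\neq a_2$ (these are never rescales of one another, since $f_i(0)=1$). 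First I would record that for $f=1+az$ all ingredients of \eqref{weightedscal} are explicit in $z$: $|df|^2_g=a^2\Theta$, $\Delta_g f$ is the radial Laplacian of $1+az$, and $Scal(g)$ is the usual admissible expression, each of which is \emph{linear} in $\Theta$. Hence $Scal_{f,4}(g)=L_a[\Theta]$ with $L_a=a^2P+aQ+R$ a second–order linear operator whose $a$–dependence is quadratic, and the $(f,4)$–extremal condition is that $L_a[\Theta]$ be affine in $z$.

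Working in the quotient by affine functions, $(f,4)$–extremality reads $a^2\bar P[\Theta]+a\bar Q[\Theta]+\bar R[\Theta]=0$. I would then exploit the decisive structural point: if a fixed $\Theta$ solves this for two distinct values $a_1,a_2$, then (provided $\bar P[\Theta]\neq 0$) Vieta's relations force $\bar Q[\Theta]=-(a_1+a_2)\,\bar P[\Theta]$ and $\bar R[\Theta]=a_1a_2\,\bar P[\Theta]$, i.e.\ the obstruction factors as $\bar P[\Theta]\,(a-a_1)(a-a_2)$. This converts the twin problem into a \emph{pair} of linear ODEs for $\Theta$ parameterized by $\sigma=a_1+a_2$ and $\pi=a_1a_2$, namely that $Q[\Theta]+\sigma P[\Theta]$ and $R[\Theta]-\pi P[\Theta]$ be each affine in $z$. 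Solving the second with the four admissibility boundary conditions produces an explicit profile $\Theta=\Theta_{x,\pi}$, vanishing simply at $z=\pm 1$, whence positivity reduces to that of its remaining factor on $(-1,1)$; substituting into the first fixes $\sigma$ as a function of $(x,\pi)$, and the two potentials are recovered as $a_{1,2}=\tfrac12\bigl(\sigma\pm\sqrt{\sigma^2-4\pi}\bigr)$, genuinely distinct and positive exactly when $\sigma^2>4\pi$ and $|a_i|<1$.

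At this stage the existence of a twin for a class $x$ is equivalent to finitely many open, explicit inequalities — the distinctness condition $\sigma(x,\pi)^2>4\pi$ together with $|a_i|<1$ and positivity of $\Theta_{x,\pi}$ on $(-1,1)$ — so the locus of admissible $(x,\pi)$ is open and its image under projection to the class parameter is the asserted open sub-cone of the Kähler cone. I expect the \emph{main obstacle} to be precisely this positivity analysis: one must show the region is non-empty for every $\hirz$, and then sharpen it to the \emph{whole} interval $x\in(0,1)$ when $\hirz\in\{0,1,2\}$, which amounts to proving that the remaining factor of $\Theta_{x,\pi}$ stays positive on $(-1,1)$ throughout the range and locating the threshold at which an interior zero first appears for larger $\hirz$. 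This is a finite but delicate study of an explicit polynomial family in $(z;x,\pi)$, and it is here that the special role of the small Hirzebruch surfaces is established.

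The Sasaki statement then costs little extra, since for a surface $n=2$ gives $n+2=4=2n$, so the weight–$4$ twins just constructed are simultaneously Einstein–Maxwell twins and, via the Apostolov–Calderbank correspondence recalled above \cite{ApCa18}, Sasaki twins. I would pick $x$ rational inside the open sub-cone and rescale so that $[\omega]$ is integral and primitive — possible because the sub-cone is open and rational classes are dense — form the Boothby–Wang circle bundle $M\to N$, and observe that $f_1,f_2$ determine Reeb fields $\xi_{f_1},\xi_{f_2}$ that commute (both lie in the Reeb torus) and are non-colinear (as $f_1,f_2$ are not proportional); by the correspondence each is the Reeb field of an extremal Sasaki structure with the same underlying CR structure, i.e.\ an extremal Sasaki twin. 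Finally, for the rigidity claims I would specialise the quadratic $a^2\bar P+a\bar Q+\bar R=0$: the admissible \emph{extremal} metric is exactly the case $\bar R[\Theta]=0$ (the root $a=0$, $f=1$), so the residual equation $a\,\bar P[\Theta]+\bar Q[\Theta]=0$ is \emph{linear} and admits at most one further root $a_2=-\bar Q[\Theta]/\bar P[\Theta]$ — hence at most one twin; and for the canonical-bundle class one evaluates $(x,\pi)$ at the Sasaki–Einstein value and checks that either the discriminant degenerates ($\sigma^2-4\pi\le 0$) or the second root violates $|a_2|<1$ or positivity of $\Theta$, so that no admissible distinct twin exists.
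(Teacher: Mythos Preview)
Your reduction to the admissible ansatz and the quadratic-in-$a$ structure $L_a[\Theta]=a^2P+aQ+R$ is essentially the paper's approach in different coordinates: the paper writes down the explicit extremal profile $F_{x,c}$, and the condition $F_{x,a}=F_{x,b}$ collapses to the single equation \eqref{twins}, which is bilinear in $(a,b)$ and hence linear in your symmetric functions $(\sigma,\pi)=(a+b,ab)$. So your Vieta parameterization and the paper's conic-section analysis of \eqref{twins} are equivalent. The paper is simply more concrete on the existence side: rather than appealing to abstract openness, it evaluates the left side of \eqref{twins} at two specific points of $\partial\bigl((-1,1)^2\bigr)$ with opposite signs and invokes the intermediate value theorem on the segment joining them (Propositions~\ref{1st2ndexistence} and~\ref{higherhirzeexistence}), while $\bbf_0$ is handled separately by a symmetry $c\leftrightarrow -c$ of an explicit product metric.

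There is, however, a genuine gap in your treatment of the last two claims. Your assertion that ``in this framework every invariant positive Killing potential is affine, $f=1+az$'' is false: a Hirzebruch surface is toric for a $2$-torus, so torus-invariant Killing potentials are affine in \emph{two} moment coordinates. In the Calabi coordinates of \S\ref{quadrilaterals} this means $f=\lambda+c_1x+c_2xy$, and your $f=1+az$ corresponds only to $c_2=0$. Consequently your ``at most one twin'' argument (the linear residue $a\bar P+\bar Q=0$) only bounds twins of the one-variable admissible form, and your Sasaki--Einstein check only rules out twins in that $1$-parameter sub-cone of the full Sasaki cone. The paper closes exactly this gap in \S\ref{quadrilaterals}: the $y^2$-coefficient of $Scal_{f,4}(g_{A,B})$ is $c_2^2\bigl(-6A_3-12A_4/x\bigr)$, and $A_3=A_4=0$ is shown to be incompatible with the compactification boundary conditions, so every $4$-weighted extremal twin of a Calabi toric extremal metric is automatically of the form $\lambda+c_1x$. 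Only then does the one-variable uniqueness apply, and the K\"ahler--Einstein case ($A_0=A_3=0$) is seen to admit no twin at all. Without this $c_2=0$ step your proof of the ``Moreover'' clause is incomplete.
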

The last statement is proved using the classification of extremal K\"ahler surfaces with toric Hamiltonian $2$-forms \cite{ApCaGa06, ACGT04, Leg09}. This toric formalism is also used here in \S \ref{ssSIMPLEX}, to prove by a direct and simple calculation that for any positive Killing potential $f$ of $(\bbc\bbp^n, g,J,\omega)$ with its Fubini-Study K\"ahler metric $g$, $(g,f)$ and $(g,1)$ are $(n+2)$-weighted extremal twins. This results can be extracted also from the results in \cite{Bry01,ApCaGa06} since the CR-structure of the $(2n+1)$-dimensional round sphere governs the Bochner-flatness of the (local) K\"ahler quotient which in turns ensures extremality in the Calabi sense. All that to say that the Fubini-Study K\"ahler metric of $\bbp^n$ has a $(n+1)$-dimensional cone of $(n+2)$-weighted extremal twins, but in all examples we know this is the only Sasaki-Einstein structure admitting a twin. In fact, identifying Killing potentials with an affine function on the moment polytope, we exhibit a combinatorial condition obstructing the existence of twins to a Sasaki-Einstein metric, see Lemma~\ref{lem:CombinCdt}. We use this criterion to prove the following, see Example~\ref{exBl3cp2}.

\begin{corollary} The standard regular Sasaki–Einstein structure on the circle bundle over $Bl_3(\bbc\bbp^2)$ the blow-up of $\bbc\bbp^2$ at the $3$ generic fixed points admits no extremal Sasaki twin. 
\end{corollary}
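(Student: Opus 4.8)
The plan is to place $Bl_3(\bbc\bbp^2)$ in the toric framework used for the last statement of the Theorem. Its fan has the six primitive generators $(1,0),(1,1),(0,1),(-1,0),(-1,-1),(0,-1)$, and the standard regular Sasaki--Einstein structure is the one on the circle bundle of the (monotone) anticanonical polarization; its transverse K\"ahler metric $g$ is the toric K\"ahler--Einstein metric, with $\Ric=\lambda\omega$, $\lambda>0$, and $Scal(g)$ constant, and with moment polytope the reflexive hexagon $P$ whose vertices are $(1,0),(0,1),(-1,1),(-1,0),(0,-1),(1,-1)$. By the correspondence recalled in the introduction (\cite{ApCa18}, here $n=2$ so $p=n+2=4$), an extremal Sasaki twin of this structure is the same datum as a nonconstant affine Killing potential $f=\langle a,\cdot\rangle+b$ on $P$ for which $g$ is $(f,4)$-extremal, i.e. for which the quantity $Scal_{f,4}(g)$ of \eqref{weightedscal} is again affine. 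The statement is therefore the assertion that no such nonconstant $f$ exists.

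First I would use that $g$ is K\"ahler--Einstein to linearize \eqref{weightedscal}. By the Matsushima--Lichnerowicz identity the mean-value-zero Killing potentials on a K\"ahler--Einstein manifold are Laplace eigenfunctions, so for affine $f$ the term $f\,\Delta_g f$ equals a multiple of $f^2$ plus an affine function, while $Scal(g)\,f^2$ is a constant times $f^2$. Writing $H$ for the inverse of the Hessian of the symplectic potential of $g$, so that $|df|_g^2=\langle Ha,a\rangle$ as a function on $P$, and collecting the genuinely quadratic terms, one finds that $Scal_{f,4}(g)$ is affine if and only if $\langle Ha,a\rangle=c\,(\langle a,\cdot\rangle+b)^2+(\text{affine})$ on all of $P$, for a constant $c$ proportional to $\lambda$ and hence nonzero. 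In other words a twin would force $\langle Ha,a\rangle$ to coincide globally with a fixed quadratic polynomial $R$ whose homogeneous quadratic part is the rank-one form $c\,\langle a,\cdot\rangle^2$.

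The key step is the boundary behavior of any toric symplectic potential: $H$ obeys the Guillemin boundary conditions, and in particular $H$ vanishes (as a matrix) at every vertex of $P$, since two independent conormal conditions $Hu_j=Hu_k=0$ hold there. Hence $R$ must vanish at all six vertices of $P$, i.e. the conic $\{R=0\}$ passes through the six listed points; this is exactly the combinatorial obstruction isolated in Lemma~\ref{lem:CombinCdt}, and it decides the question. A direct computation shows that these six vertices lie on the \emph{unique} conic $x^2+xy+y^2=1$: the linear system for the conic coefficients has a one-dimensional solution space, consistent with the fact that two conics sharing no common line meet in at most four points while $x^2+xy+y^2=1$ is irreducible. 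Consequently $R$ is a scalar multiple of $x^2+xy+y^2-1$, so its quadratic part is positive definite (rank two); this is incompatible with the rank-one part $c\,\langle a,\cdot\rangle^2$ unless $a=0$. Thus $f$ is constant, there is no twin, and this is the computation performed in Example~\ref{exBl3cp2}.

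The main obstacle is precisely this last verification: one must check that the unique conic through the six vertices of the hexagon is nondegenerate (in fact positive definite), because that nondegeneracy is exactly what excludes the rank-one profile imposed by the $(f,4)$-extremal equation. This is where the hexagonal combinatorics is used in an essential way: for the simplex polytope of $\bbc\bbp^n$ only three vertices must be annihilated, and degenerate (rank-one) conics through three points are plentiful, which is why the Fubini--Study metric does admit a cone of twins. A secondary point to handle with care is confirming that vanishing of $R$ at the vertices alone---together with the prescribed rank-one quadratic part---already triggers Lemma~\ref{lem:CombinCdt}, so that no information about the interior values of the K\"ahler--Einstein metric $H$ is required.
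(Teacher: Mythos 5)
Your proof is correct and follows essentially the same route as the paper: the reduction of the twin condition for a (transversally) K\"ahler--Einstein structure to a quadratic vanishing condition at the vertices of the moment polytope is exactly Lemma~\ref{lem:CombinCdt}, and the hexagon verification is the content of Example~\ref{exBl3cp2}. The only difference is in how the final elimination is packaged --- where the paper solves the two barycentric equations at $p_3$ and $p_4$ by hand, you observe that the six vertices lie on the unique conic $x^2+xy+y^2=1$, whose positive-definite quadratic part is incompatible with the rank-one quadratic part $c\langle a,\cdot\rangle^2$ forced by the $(f,4)$-extremal equation, which also makes transparent why the simplex of $\bbc\bbp^n$ behaves differently.
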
 

We have a special interest in constant scalar Sasaki (cscS) structures which is a particular case of extremal Sasaki metrics that comes in rays into their Sasaki cones. Since the Tanaka-Webster scalar curvature, $Scal(\xi_f)$, of $\xi_f$ equals $Scal_{f,n+2}(g)/f$,
this extremal Sasaki structure is a cscS precisely when 
\begin{equation}\label{cscS1}
Scal_{f,n+2}(g)/f=\text{constant}.
\end{equation} The condition that both $(M, \cald, J,\xi_1)$ and $(M, \cald, J,\xi_f)$ are cscS gives a strong equation on $f$ via \eqref{cscS1}. Exploiting this, we are able to show the following result which implies that the (projective) set of solution is quadratic.  

\begin{theorem} Let $(M,\cald,J)$ be a pseudoconvex compact connected manifold on which acts a compact torus $T$ and denote $\kt^+$ the Sasaki cone. Denote by $\mathrm{cscS}(\kt^+) \subset \kt^+$ the set of rays of cscS structure (i.e for each $\xi \in \mathrm{cscS}(\kt^+)$, $(M,\cald,J,\xi)$ is cscS). Then, 
\begin{itemize}
    \item[1)] $\mathrm{cscS}(\kt^+)/\bbr_{>0} \subset \kt^+/\bbr_{>0}$ meets every line in $\kt^+/\bbr_{>0}$ in at most two points;    
    \item[2)] $\mathrm{cscS}(\kt^+)$ lies in the boundary of its convex hull. 
\end{itemize}
\end{theorem}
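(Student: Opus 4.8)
The plan is to reduce the constant--scalar--curvature condition to a finite--dimensional algebraic incidence relation on $\kt^+$ and then to control its solutions using the convexity of the Sasaki volume. Fix a reference Reeb field $\xi_1\in\kt^+$ with transverse K\"ahler data $(J,g,\omega)$ and reference volume $dV:=\omega^n/n!$ on the transverse space $N$. For $\xi\in\kt^+$ let $f_\xi>0$ be the associated Killing potential; identifying Killing potentials with affine functions on the moment polytope, the map $\xi\mapsto f_\xi$ is linear. By Apostolov--Calderbank the Tanaka--Webster scalar curvature of $\xi$ equals $Scal_{f_\xi,n+2}(g)/f_\xi$, so $(M,\cald,J,\xi)$ is cscS exactly when $Scal_{f_\xi,n+2}(g)=c\,f_\xi$ for a constant $c$. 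Since $f\mapsto Scal_{f,n+2}(g)$ is quadratic in $f$ and its derivatives and $f_\xi$ is linear in $\xi$, the assignment $\xi\mapsto Scal_{f_\xi,n+2}(g)$ is a homogeneous quadratic polynomial valued in $C^\infty(N)$; write $\mathcal{Q}(\xi,\xi')$ for its polarization, so $\mathcal{Q}(\xi,\xi)=Scal_{f_\xi,n+2}(g)$. Integrating $\mathcal{Q}(\xi,\xi)=c\,f_\xi$ against $dV$ forces $c=\Theta(\xi,\xi)/\theta(\xi)$, where $\Theta(\xi,\xi'):=\int_N\mathcal{Q}(\xi,\xi')\,dV$ is a symmetric bilinear form on $\kt$ and $\theta(\xi):=\int_N f_\xi\,dV$ a positive linear form. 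Hence
\[
\mathrm{cscS}(\kt^+)=\Big\{\xi\in\kt^+:\ \theta(\xi)\,\mathcal{Q}(\xi,\xi)=\Theta(\xi,\xi)\,f_\xi\ \text{in }C^\infty(N)\Big\},
\]
which displays the solution set as cut out by relations that become quadratic once $c$ is treated as an auxiliary variable.

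I then analyze a single line. A line in $\kt^+/\bbr_{>0}$ is the projectivization of a $2$--plane $\Pi=\mathrm{span}(\xi_0,\xi_1)$, parametrized on the cross--section $\{\theta=1\}$ by $\xi(u)=(1-u)\xi_0+u\xi_1$. If $\xi_0,\xi_1$ are cscS then $\mathcal{Q}(\xi_i,\xi_i)=\Theta(\xi_i,\xi_i)f_{\xi_i}$, and expanding $\mathcal{Q}(\xi(u),\xi(u))$ shows that a further cscS point on $\Pi$ can occur only if the cross term $\mathcal{Q}(\xi_0,\xi_1)$ lies in $\mathrm{span}(f_{\xi_0},f_{\xi_1})$; in that rigid case the residual equation for $u$, after removing the automatic roots $u=0,1$, is \emph{linear}. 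Thus a priori a line carries at most three cscS points, and the presence of a third forces the cross--term identity $\mathcal{Q}(\xi_0,\xi_1)\in\mathrm{span}(f_{\xi_0},f_{\xi_1})$.

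Upgrading ``three'' to ``two'' is the main obstacle, and here positivity must enter. Pairing $\mathcal{Q}(\xi,\xi)=c\,f_\xi$ against $f_\xi^{-(n+2)}\,dV$ identifies $c$ with the genuine average $S(\xi)/\mathcal{V}(\xi)$, where $S(\xi):=\int_N\mathcal{Q}(\xi,\xi)f_\xi^{-(n+2)}\,dV$ and $\mathcal{V}(\xi):=\int_N f_\xi^{-(n+1)}\,dV$ is the Sasaki volume. Because $f_\xi$ is linear in $\xi$ and $t\mapsto t^{-(n+1)}$ is strictly convex, $\mathcal{V}$ is \emph{strictly} convex on $\kt^+$, while on $\mathrm{cscS}(\kt^+)$ the constraint $S(\xi)=\Theta(\xi,\xi)\,\mathcal{V}(\xi)$ holds (using $c=\Theta/\theta$ on $\{\theta=1\}$). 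I expect the rigid cross--term identity to be incompatible with the strict convexity of $\mathcal{V}$ along $\Pi$, pushing the third intersection out of the cone and leaving at most two.

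For part (2) I would package the same convexity geometrically: the aim is to show that, in the cross--section $\{\theta=1\}$, $\mathrm{cscS}(\kt^+)$ lies on a strictly convex hypersurface built from $\mathcal{V}$ and the relation $S=\Theta\,\mathcal{V}$. A line meets such a hypersurface in at most two points, which re-proves (1), and every point on it is extreme, so $\mathrm{cscS}(\kt^+)$ consists of extreme points of its convex hull --- precisely the statement that it lies in the boundary of that hull. The crux of the whole argument is the single analytic step of converting the strict convexity of $\mathcal{V}$ into strict convexity of the hypersurface carrying $\mathrm{cscS}(\kt^+)$: this is exactly what simultaneously excludes a third collinear solution and forces extremality, and I expect it to be cleanest in the toric case, where $\mathcal{V}$, $S$, $\Theta$, and $\theta$ are explicit integrals over the moment polytope.
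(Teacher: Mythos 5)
Your reduction of the cscS condition to the quadratic identity $\mathcal{Q}(\xi,\xi)=c\,f_\xi$ and the resulting ``at most three collinear solutions, and a third forces $\mathcal{Q}(\xi_0,\xi_1)\in\mathrm{span}(f_{\xi_0},f_{\xi_1})$'' step are sound, but the proof stops exactly where the theorem begins: both the passage from three to two in part (1) and all of part (2) are left as expectations (``I expect the rigid cross--term identity to be incompatible with the strict convexity of $\mathcal{V}$\,'', ``the aim is to show \dots lies on a strictly convex hypersurface''). No argument is given that strict convexity of $\xi\mapsto\int_N f_\xi^{-(n+1)}dV$ contradicts the cross--term identity, and it is not clear it does: the locus $\{S=\Theta\,\mathcal{V}\}$ involves $S$ and $\Theta$, which are neither convex nor concave in any evident way, so the claimed strictly convex hypersurface carrying $\mathrm{cscS}(\kt^+)$ is unsubstantiated. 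As written, the proposal establishes only ``at most three points per line'' and nothing about the convex hull.

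The missing positivity input is much more elementary than volume convexity, and it is what the paper actually uses. Fix $\xi=\sum_i\lambda_i\xi_i$ with all $\lambda_i>0$ and assume $\xi$ and each $\xi_i$ are cscS; let $\eta,g$ be the data of $\xi$ and $f_i=\eta(\xi_i)$. Dividing the weighted--scalar--curvature identity by $f_i$ gives
\begin{equation*}
C_i-Cf_i=-2(n+1)\Delta^{g}f_i-(n+1)(n+2)\,f_i^{-1}|df_i|^2_{g},
\end{equation*}
and summing against the $\lambda_i$, the constraint $\sum_i\lambda_if_i=\eta(\xi)=1$ kills the Laplacian terms, leaving the constant $\sum_i\lambda_iC_i-C$ equal to $-(n+1)(n+2)\sum_i\frac{\lambda_i}{f_i}|df_i|^2_{g}$. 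This non-positive function vanishes on the (non-empty) fixed--point set of the torus, hence is identically zero, forcing each $f_i$ to be constant, i.e.\ all the $\xi_i$ colinear with $\xi$. This single computation rules out any cscS ray in the interior of the cone spanned by non-colinear cscS rays, which simultaneously gives ``at most two per line'' (a third collinear point would be a positive combination of the other two) and the convex--hull statement in (2). I would recommend replacing the convexity strategy by this cancellation--plus--vanishing--at--fixed--points argument; your polarization set-up is compatible with it but does not by itself supply the needed positivity.
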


In the examples we studied we did not find more than two cscS twins on the same CR structure. In particular, there is no more than two cscS twins on the CR-structure underlying a toric Sasaki manifold with moment quadrilaterals, see \S\ref{quadrilaterals}. Note that this confirms that among the three cscS structures of \cite{Leg10} only two of them are twins. Finally, we also point out that there is an abundance of these two cscS twins. See Example \ref{exemplecscStwins}.\\ 

In \S\ref{sasakibackground} we will give a brief review of Sasaki geometry and in particular describe the Boothby-Wang construction alluded to at various points in this paper.
In \S\ref{KahlerHirze} we will focus on the K\"ahler setting and treat the Hirzebruch surfaces in great detail and produce many examples 
of weighted extremal K\"ahler metrics with weight $p=4$. See Propositions \ref{page}, \ref{1st2ndexistence}, \ref{higherhirzeexistence}, as well as \S\ref{0hirze}. The weight $p=4$ is chosen since the complex dimension $n$ is two and so here $p$ is both 
\begin{itemize}
\item equal to $2n$,
allowing us to generalize the twinning phenomenon appearing among the strongly Hermitian solutions to the Einstein-Maxwell equations on the first Hirzebruch surface, and
\item equal to $n+2$, allowing us, when appropriate, to draw conclusion pertaining to extremal Sasaki twins.
\end{itemize}
In \S\ref{highergenus} we extrapolate the work from \S\ref{KahlerHirze} to show that we also have examples of weighted extremal K\"ahler metrics with weight $p=4$ on minimal ruled surfaces over higher genus Riemann surfaces.
This is to show that the twinning phenomenon is not confined to toric structures.
\S~\ref{sastwins} is dedicated to extremal Sasaki twins and the last section contains the local or transversal computations on the quadrilaterals.  


\begin{ack} We would like to thank Vestislav Apostolov for making us aware of the work by Futaki and Ono \cite{FutOno18,FutOno19} and Viza De Souza \cite{VizaDS21}. See Remark \ref{FODeSouza}. We also would like to thank Claude LeBrun and Roland P{\'u}\v{c}ek for their interest in our work.
\end{ack}

\section{Brief introduction to Sasaki Geometry and the Boothby-Wang Construction\label{sasakibackground}}

Sasaki geometry can be thought of as the odd dimensional sister geometry of K\"ahler geometry. Indeed, up to an infinitesimal deformation, a Sasakian structure is an $S^1$ orbibundle over a K\"ahler orbifold. This is the orbifold version of a construction due originally to Boothby and Wang \cite{BoWa58} in the context of contact and symplectic geometries. When the Reeb vector field of the contact structure has closed orbits the symplectic base is an orbifold, and when the symplectic base has a K\"ahlerian structure the total space of the $S^1$ orbibundle has a Sasakian structure. 
We refer to this construction as the orbifold Boothby-Wang construction, or just the Boothby-Wang construction for short when the context is clear. It is important to realize that there are Sasakian structures which do not have a well-defined base. These arise when the orbits of the Reeb vector field are not closed, but their closure generates a higher dimensional torus. To summarize, Sasaki geometry can be understood, up to infinitesimal deformations, in terms of the bundle structure

\centerline{circle $\lra$ Sasaki $\lra$ K\"ahler.}
\noindent
For more details see Section 7.1 in \cite{BG05}.

In this paper, whenever we are considering a Sasaki structure $(\cald, J, \xi)$ with Sasaki metric $g$ we will use $Scal(g)$ to denote the Tanaka-Webster scalar curvature $Scal(\xi)$ of $\xi$. This also corresponds to the transverse scalar curvature, i.e., the scalar curvature of the transverse (local) K\"ahler structure. [The scalar curvature of the Sasaki metric $g$ (as a Riemannian metric on the underlying $2n+1$-dimensional manifold) is equal to $Scal(g)-2n$, and hence just differs from $Scal(g)$ by a constant.]

\begin{remark}
Given the Boothby-Wang construction it is occasionally useful to mix the jargon between Definitions \ref{twinsKahler} and \ref{twinsSasaki}. Suppose $g$ is an extremal K\"ahler metric such that it is the transverse K\"ahler structure of an extremal Sasaki structure $(\cald,J,\xi)$. Further, suppose  that $(g,1)$ and $(g,f)$ are $p$-weighted extremal K\"ahler twins with $p=n+2$, i.e., $f>0$ is a non-constant Killing potential and $g$ is $(f,n+2)$-extremal. Then we know from the discussion in \S\ref{intro} that $(g,f)$ corresponds to an extremal Sasaki structure $(\cald,J,\xi_f)$ and therefore $(\cald,J,\xi)$ and $(\cald,J,\xi_f)$ are extremal Sasaki twins. In this case, and when it is convenient, we might say that the K\"ahler metric $g$ ``has a extremal Sasaki twin.''
\end{remark}

\section{K\"ahler setting: $4$-Weighted Extremal K\"ahler twins on Hirzebruch Surfaces}
\label{KahlerHirze}
We will now explore a particularly attractive and well-known family of compact, complex surfaces, namely the \emph{Hirzebruch surfaces}. These are total spaces of $\bbc\bbp^1$-bundles over $\bbc\bbp^1$ that can be classified by non-negative integers with the integer zero corresponding to the trivial Hirzebruch surface $\bbf_0=\bbc\bbp^1\times \bbc\bbp^1$. In general we have the $\hirz^{th}$ Hirzebruch surface, $\bbf_\hirz$, for each positive integer $\hirz$.

Let $\hirz\in {\mathbb Z}^+$ and let $\bbf_\hirz$ denote the $\hirz$th Hirzebruch surface of the form
${\mathbb P}({\mathcal O} \oplus {\mathcal O}(\hirz)) \rightarrow {\mathbb C}{\mathbb P}^1$,
where ${\mathcal O}(\hirz)$ is the
holomorphic line bundle of degree $\hirz$ over ${\mathbb C}{\mathbb P}^1$, and
${\mathcal O}$ is the trivial holomorphic line bundle. 

We know that $\bbf_\hirz$ admits no cscK metrics when $\hirz>0$ 
(see \cite{cal82}).
Let $g_{{\mathbb C}{\mathbb P}^1}$ be the
K\"ahler metric
on ${\mathbb C}{\mathbb P}^1$ of constant scalar curvature $2s$, with K\"ahler form
$\omega_{{\mathbb C}{\mathbb P}^1}$, such that
$c_{1}({\mathcal O}(\hirz)) = [\frac{\omega_{{\mathbb C}{\mathbb P}^1}}{2 \pi}]$.
Let ${\mathcal K}_{{\mathbb C}{\mathbb P}^1}$ denote the canonical bundle of ${\mathbb C}{\mathbb P}^1$. Since $c_1({\mathcal K}_{{\mathbb C}{\mathbb P}^1}^{-1}) = [\rho_{{\mathbb C}{\mathbb P}^1}/2\pi]$, where 
$\rho_{{\mathbb C}{\mathbb P}^1}$ denotes the Ricci form, we must have that $s= 2/\hirz$. 

The natural fiber wise $\mathbb{C}^*$-action on ${\mathcal O}(\hirz) \rightarrow {\mathbb C}{\mathbb P}^1$
extends to a holomorphic
$\mathbb{C}^*$-action on $\bbf_\hirz$. The open and dense set $\bbf_\hirz^0$ of stable points with
respect to the
latter action has the structure of a principal $\mathbb{C}^*$-bundle over the stable
quotient.
A choice of a Hermitian norm on the fibers of ${\mathcal O}(\hirz)$, whose Chern connection has curvature equal to $\omega_{{\mathbb C}{\mathbb P}^1}$, induces via a Legendre transform a function
$\mathfrak{z}:\bbf_\hirz^0\rightarrow (-1,1)$ whose extension to $\bbf_\hirz$ consists of the critical manifolds
$E_{0}:=\mathfrak{z}^{-1}(1)=P({\mathcal O} \oplus 0)$ and
$E_{\infty}:= \mathfrak{z}^{-1}(-1)=P(0 \oplus {\mathcal O}(\hirz))$.
These  are respectively the zero and infinity section of $\bbf_\hirz \rightarrow
{\mathbb C}{\mathbb P}^1$. It is well-known that  $E_{0}$ and $E_{\infty}$ have the property that $E_{0}^{2} = \hirz$ and $E_{\infty}^{2} = -\hirz$,
respectively. If $C$ denotes a fiber of the ruling $\bbf_\hirz \rightarrow
{\mathbb C}{\mathbb P}^1$, then $C^{2}=0$, while $C \cdot E_{i} =1$ for both, $i=0$ and
$i=\infty$. Any real cohomology class in the two dimensional space
$H^{2}(\bbf_\hirz, {\mathbb R})$ may be written as a linear combination of
the Poincar\'e duals of $E_{0}$ and $C$, i.e. $(m_{1} E_{0} +m_{2} C)^*$.
Thus, we may think of $H^{2}(\bbf_\hirz, {\mathbb R})$ as ${\mathbb R}^2$,
with coordinates $(m_{1},m_{2})$. So the K\"ahler cone ${\mathcal K}$ may
be identified with
${\mathbb R}_{+}^2=\{ (m_{1}, m_{2})\,  | \; m_{1} > 0,
m_{2} > 0 \}$.

\subsection{Admissible metrics on Hirzebruch Surfaces}\label{hirze}

The term \emph{admissible K\"ahler metrics} were introduced as such in \cite{ACGT08}, but on a Hirzebruch surface they date much further back to \cite{cal82}. What follows is a quick overview on how to build such metrics
on $\bbf_\hirz$. We will use the notation from \cite{ACGT08} to which we refer for references as well as more technical details on what follows below.

Let  $\theta$ be a connection one form for the
Hermitian metric on $\bbf_\hirz^0$, with curvature
$d\theta = \omega_{{\mathbb C}{\mathbb P}^1}$. Let $\Theta$ be a real function which is smooth and positive on the open interval
$(-1,1)$ and continuous on the closed interval $[-1,1]$. Let $x$ be a real number such that $0 < x < 1$.
Then an admissible K\"ahler metric
is given on $\bbf_\hirz^0$ by
\begin{equation}\label{metric}
g_x  =  \frac{1+x \mathfrak{z}}{x} g_{{\mathbb C}{\mathbb P}^1}
+\frac {d\mathfrak{z}^2}
{\Theta (\mathfrak{z})}+\Theta (\mathfrak{z})\theta^2\,
\end{equation}
with K\"ahler form
\begin{equation*}
\omega_x =  \frac{1+x \mathfrak{z}}{x}\omega_{{\mathbb C}{\mathbb P}^1}
+d\mathfrak{z}\wedge \theta\,. \label{kf}
\end{equation*}
The complex structure yielding this
K\"ahler structure is given by the pullback of the base complex structure
along with the requirement
\begin{equation}\label{complex}
Jd\mathfrak{z} = \Theta \theta.
\end{equation}
 The function $\mathfrak{z}$ is
Hamiltonian
with $K= J\grad \mathfrak{z}$ a Killing vector field. Observe that $K$
generates the circle action which induces the holomorphic
$\mathbb{C}^*$- action on $\bbf_\hirz$ as introduced above.
In fact, $\mathfrak{z}$ is the moment
map on $\bbf_\hirz$ for the circle action, decomposing $\bbf_\hirz$ into
the free orbits $\bbf_\hirz^0 = \mathfrak{z}^{-1}((-1,1))$ and the special orbits
$\mathfrak{z}^{-1}(\pm 1)$. Finally, $\theta$ satisfies
$\theta(K)=1$.
In order that $g_x$ (be a genuine metric and) extend to all of $\bbf_\hirz$,
$\Theta$ must satisfy the positivity and boundary
conditions
\begin{align}
\label{positivity}
(i)\ \Theta(\mathfrak{z}) > 0, \quad -1 < \mathfrak{z} <1,\quad
(ii)\ \Theta(\pm 1) = 0,\quad
(iii)\ \Theta'(\pm 1) = \mp 2.
\end{align}
It is convenient to define a function $F(\mathfrak{z})$ by the formula
\begin{equation}\label{theta}
\Theta(\mathfrak{z})= \frac{F(\mathfrak{z})}{(1+x
\mathfrak{z})}.
\end{equation}
Since $(1+x
\mathfrak{z})$ is positive for $-1<\mathfrak{z}<1$, conditions
\eqref{positivity}
imply the following equivalent conditions on $F(\mathfrak{z})$:
\begin{align}
\label{positivityF}
(i)\ F(\mathfrak{z}) > 0, \quad -1 < \mathfrak{z} <1,\quad
(ii)\ F(\pm 1) = 0,\quad
(iii)\ F'(\pm 1) = \mp 2(1 \pm x).
\end{align}

The construction of admissible K\"ahler metrics is based on the symplectic viewpoint. 
Different choices of $F$ yield different complex structures that are all compatible with the same fixed symplectic form $\omega_x$. However, for each $F$ there is an $S^1$-equivariant diffeomorphism pulling back $J$ to
 the original fixed complex structure on $\bbf_\hirz$ in such a way that the K\"ahler form of the new K\"ahler metric is in the same cohomology class as $\omega_x$ \cite{ACGT08}. 
 Therefore, with all else fixed, we may view the set of the functions $F$ satisfying \eqref{positivityF} as parametrizing a certain family of K\"ahler metrics within the same K\"ahler class of $\bbf_\hirz$.

One easily checks that the K\"ahler class of an admissible metric \eqref{metric} is given by
\begin{equation}\label{class1}
[\omega_x] = 4\pi E_{0}^*+ \frac{2\pi(1-x)\hirz}{x} C^*.
\end{equation}
Up to an overall rescale, every K\"ahler class in the K\"ahler cone may be represented by an admissible K\"ahler metric. Note that when $x\in \bbq\cap (0,1)$, $[\omega_x]$ can be rescaled to be an integer cohomology class. Thus
the $x\in \bbq\cap (0,1)$ cases are of particular interest from a Sasaki geometry point of view.

\subsection{Weighted Extremal Admissible K\"ahler metrics}\label{admisEMsoln}
As described in \S 3 of \cite{BHLT23}, by employing Proposition 11 of \cite{ACGT08} and (the second half of) Theorem 1 in \cite{ApMaTF18}, we have that (up to scale) every K\"ahler class on $\bbf_\hirz$ admits an admissible $(c\,\gz+1,4)$-extremal metric for any choice of $c\in (-1,1)$.

Following \cite{ApMaTF18} (with some adjustment in notation), we have that for a given $c\in (-1,1)$ and $x\in (0,1)$, the
admissible $(c\,\gz+1,4)$-extremal metric $g_{x,c}$ is determined by the function
\begin{equation}\label{wextrpol5mnf}
F_{x,c}(\mathfrak{z}):=(c\gz+1)^3 \left[ \frac{2(1-x)}{(1-c)^3}(\gz+1) + \int_{-1}^\gz Q(t)(\gz-t)\,dt\right],
\end{equation}
where 
$$Q(t) = \frac{2xs}{(ct+1)^3} - \frac{(A_1 t+A_2)(1+xt)}{(ct+1)^5}$$
and $A_1$ and $A_2$ are the unique solutions to the linear system
\begin{equation}\label{wextrpol5mnf2}
\begin{array}{ccl}
\alpha_{1,-5}A_1+\alpha_{0,-5}A_2&=& 2\beta_{0,-3}\\
\\
\alpha_{2,-5}A_1+\alpha_{1,-5}A_2&=& 2\beta_{1,-3}
\end{array}
\end{equation}
with
$$
\begin{array}{ccl}
\alpha_{r,-k} & = & \int_{-1}^1(ct+1)^{-k}t^r(1+xt)\,dt\\
\\
\beta_{r,-k} & = & x s\int_{-1}^1(ct+1)^{-k}t^r\,dt \\
\\
&+ & (-1)^r(1-c)^{-k}(1-x) +(1+c)^{-k}(1+x).
\end{array}
$$
It is straightforward to calculate that
$$
F_{x,c}(\mathfrak{z})=(1-\gz^2)P_{x,c}
$$
where 
\begin{equation}\label{polP}
\begin{array}{ll}
P_{x,c}(\gz) =  \frac{c^2 s x+3 c^2 x^2-c^2-2 c s x^2+3 c x^3-7 c x+s x^3-4 x^2+6}{2 \left(3 c^2 x^2-c^2-4 c x-x^2+3\right)}
+x\, \gz +
&\frac{(c-x) \left(-c s x+3 c x^2-c+s x^2-2 x\right)}{2 \left(3 c^2 x^2-c^2-4 c x-x^2+3\right)}\,\gz^2.
\end{array}
\end{equation}
Note that $3 c^2 x^2-c^2-4 c x-x^2+3=(3x^2-1) c^2-4x c +3-x^2>0$ for all $x\in (0,1)$ and $c\in (-1,1)$.
Note also that for any $c\in (-1,1)$ and $x\in (0,1)$, all of the conditions in \eqref{positivityF} are satisfied here\footnote{If we extend things to higher genus geometrically ruled surfaces (as we will in Appendix \ref{highergenus}), (i) of  \eqref{positivityF}
would not be guaranteed in all cases. See e.g \cite{To-Fr98}, for the classic (extremal) case where $c=0$.}.
Then, $Scal_{c\,\gz+1,4}(g_x)$ is constant ($A_1=0$) iff 
\begin{equation}\label{em}
\alpha_{0,-5}\beta_{1,-3}-\alpha_{1,-5}\beta_{0,-3}=0,
\end{equation}
which is equivalent to
\begin{equation}\label{em2}
\left(c^2 x-2 c+x\right) \left(c^2 s x-2 c x-s x+2\right)=0.
\end{equation}
The solutions to Equation \eqref{em2} just recover the examples of the strongly Hermitian solutions of Einstein-Maxwell equations already produced by LeBrun in \cite{Leb16}.

Similarly, \eqref{cscS1} (with $f=c\gz+1$ and complex dimension $n=2$) is satisfied precisely when
\begin{equation}\label{cscS2}
\alpha_{0,-4}\beta_{1,-3}-\alpha_{1,-4}\beta_{0,-3}=0
\end{equation}
(see equation (10) in \cite{BHLT23}) and \eqref{cscS2} is equivalent to
\begin{equation}\label{cscS3}
(3x^2+sx-1)c^3 -x(6+sx)c^2  +(5-sx+x^2)c+ (sx-2)x=0.
\end{equation}
When $x\in \bbq\cap (0,1)$, this leads to csc Sasaki metrics on $S^3$-bundles over $S^2$. These metrics have been studied in e.g.
\cite{BoPa10, Leg10}.
Note that for a given value of $s=2/\hirz$ and $x\in (0,1)$, \eqref{cscS3} has a unique solution. This can be seen directly (following a similar discussion in \S4.1 of \cite{BTF23}) by making
a change of variable $c=\frac{1-\hat{c}}{1+\hat{c}}$ with $0<\hat{c}<+\infty$  so that \eqref{cscS3} rewrites to
$q(\hat{c})=0$, where $q(\hat{c})$ is the polynomial
$$q(\hat{c})=-(1+x)^2\hat{c}^3-(1+x)(2(1-x)-sx)\hat{c}^2+(1-x)(2+2x-sx)\hat{c}+(1-x)^2.$$
Since $s\leq 2$ and $0<x<1$, it is not hard to see that $q(\hat{c})$ has precisely one positive real root\footnote{This observation is pertaining to a proper subcone of the Sasaki cone and, in particular, does not in any way imply uniqueness of csc Sasaki metrics in the (entire) Sasaki cone. From \cite{Leg10} we know that such uniqueness does not hold in general.}. The uniqueness part uses 
Descarte's rule of signs. A simple example can be obtained by taking $s=2$ and $x=\frac{1}{3}$ in which case Equation \eqref{cscS3} becomes a quadric with the solution $c=\frac{5-2\sqrt5}{5}$.

\subsection{$4$-Weighted Extremal Admissible K\"ahler twins} \label{hirzetwins}

For a given $x\in (0,1)$, we now study cases where we have two distinct values $a,b\in (-1,1)$ so that
$F_{x,a}=F_{x,b}$. This means that the corresponding admissible K\"ahler metric, $g_x:=g_{x,a}=g_{x,b}$ is a $(f,4)$-extremal K\"ahler metric with respect to two different
Killing potentials, $f_1=a\gz+1$ and $f_2=b\gz +1$ which are then a pair of $4$-weighted extremal K\"ahler twins $(g,f_1)$ and $(g,f_2)$.\\

We already know that such cases exist. Indeed, examples can be found among LeBrun's strongly Hermitian solutions of Einstein-Maxwell equations on the first Hirzebruch surface \cite{Leb16} (see the discussion
in \S 3.2 of \cite{KoTF16}): For each $4/5<x<1$, there exist pairs of admissible $4$-weighted extremal twins $(g_x,a\gz+1)$ and $(g_x,b\gz+1)$ such that both $c=a$ and $c=b$ solve \eqref{em2}, that is $Scal_{a\gz+1,4}(g_x)$ and $Scal_{b\gz+1,4}(g_x)$ are both constant. 
Moving forward, we will refer to these as the {\em LeBrun Einstein-Maxwell Twins}.

Another example on the first Hirzebruch surface is the case of the Page metric, $h$, \cite{Pag} which is Hermitian Einstein, and its companion extremal K\"ahler metric $g$, with (up to scale)
$f_1=Scal(g)$ and $f_2=1$ (see \cite{Derd83}) and $h=f_1^{-2}g$. Here, $Scal_{f_1,4}=Scal(h)$ is constant, but $Scal_{f_2,4}=Scal(g)$ is not constant. We will call this pair the {\em Page Twins}. We encourage the reader to read \S 5 of \cite{Leb16} for a more detailed discussion of the Page metric in the first Hirzebruch surface.

What we are about to discover is that there is an {\em abundance} of such $4$-weighted extremal K\"ahler twins on $\bbf_\hirz$.\\ 

For a given $x\in(0,1)$ and $a\in (-1,1)$, consider the admissible $(a\gz+1,4)$-extremal K\"ahler metric with $P_{x,a}$ as in \eqref{polP}. 
A potential twin would be manifested by a value $b\in (-1,1)$, with $b\neq a$, such that
$P_{x,b}=P_{x,a}$, i.e.,
$$
\begin{array}{ccl}
 \frac{b^2 s x+3 b^2 x^2-b^2-2 b s x^2+3 b x^3-7 b x+s x^3-4 x^2+6}{2 \left(3 b^2 x^2-b^2-4 b x-x^2+3\right)} & = & \frac{a^2 s x+3 a^2 x^2-a^2-2 a s x^2+3 a x^3-7 a x+s x^3-4 x^2+6}{2 \left(3 a^2 x^2-a^2-4 a x-x^2+3\right)}\\
 \\
 \frac{(b-x) \left(-b s x+3 b x^2-b+s x^2-2 x\right)}{2 \left(3 b^2 x^2-b^2-4 b x-x^2+3\right)}&=&\frac{(a-x) \left(-a s x+3 a x^2-a+s x^2-2 x\right)}{2 \left(3 a^2 x^2-a^2-4 a x-x^2+3\right)}
 \end{array}
 $$
It is straightforward to calculate that, bearing in mind $x\in (0,1)$ and $a,b\in (-1,1)$, this system is equivalent to
one equation:
$$
(a-b)(x (1 - 2 s x + x^2)+a(1 + s x - 3 x^2 + s x^3)+b(1 + s x - 3 x^2 + s x^3+a x (-1 - 2 s x + 3 x^2)))=0.
$$
Since we are looking for solutions with $b\neq a$, this further
simplifies to 
\begin{equation}\label{twins}
x (1 - 2 s x + x^2)+(1 + s x - 3 x^2 + s x^3)a+(1 + s x - 3 x^2 + s x^3)b-x (1 + 2 s x - 3 x^2)ab=0.
\end{equation}
Solving for $b$ we get
\begin{equation}\label{whatbis}
b=-\frac{x (1 - 2 s x + x^2)+a(1 + s x - 3 x^2 + s x^3)}{(1 + s x - 3 x^2 + s x^3-a x (1 + 2 s x - 3 x^2))}.
\end{equation}
As long as $s=2/\hirz$, $x\in (0,1)$, and $a\in (-1,1)$ are such that \newline
$(1 + s x - 3 x^2 + s x^3-a x (1 + 2 s x - 3 x^2))\neq 0$ and moreover 
$b$ as given in \eqref{whatbis} satisfies
$-1<b<1$, we have a genuine solution
$b\in (-1,1)$, which will give us a pair of $4$-weighted extremal K\"ahler twins, $(g_x,a\gz+1)$ and $(g_x,b\gz+1)$, provided $b\neq a$.
If everything else is okay but $b=a$, it just means that the twins coincide with each other which would correspond to a bifurcation point.

\begin{remark}
By the nature of the equation \eqref{twins}, we see that (at least generically (meaning the corresponding conic section is not degenerate),
for a given
admissible $(a\gz +1,4)$-extremal K\"ahler metric $g_x$ there is at most one other value $b\in (-1,1)\setminus\{a\}$ such that $g_x$ is also $(b\gz +1,4)$-extremal.

Assuming the non-degeneracy of \eqref{twins}, we also note that if $x\in \bbq\cap (0,1)$ (giving us an integer K\"ahler class up to rescale) and if $(a,b)$ is a solution to \eqref{twins}, then $a\in \bbq \iff b \in \bbq$.
\end{remark}

\begin{example}[Generalizing the Page twins]
\label{pagetwins}
Let us assume $\hirz=1$, so $s=2$. That is, $\bbf_1$ is the first Hirzebruch surface.
Then \eqref{whatbis} is equivalent to
$$
b=-\frac{x (1 - 4x + x^2)+a(1 + 2 x - 3 x^2 + 2 x^3)}{(1 + 2 x - 3 x^2 + 2 x^3+a x (-1 - 4 x + 3 x^2))},
$$
and if we further assume $a=0$, we get
$$
b=-\frac{x (1 - 4x + x^2)}{(1 + 2 x - 3 x^2 + 2 x^3)}.
$$
Notice first that the denominator in the above expression is positive for all $x\in (0,1)$. Further, with $b$ as given above
$$b-(-1)=\frac{(x+1) \left(x^2+1\right)}{2 x^3-3 x^2+2 x+1}>0$$
and
$$1-b= \frac{(1-x) \left(1+4x-3 x^2\right)}{2 x^3-3 x^2+2 x+1}  >0$$
and hence
$$-1<-\frac{x (1 - 4x + x^2)}{(1 + 2 x - 3 x^2 + 2 x^3)}<1.$$
Finally notice that $b\neq 0$ as long as $x\in (0,1)\setminus\{2-\sqrt{3}\}$.
Thus in this case we have the following result that generalizes the Page twins. When $x=2-\sqrt{3}$, we get $b=0=a$ and the two twins are identical ($x=2-\sqrt{3}$ is a bifurcation point).
\end{example}

From this example we obtain the following result:

\begin{proposition}\label{page}
For all $x\in (0,1)\setminus\{2-\sqrt{3}\}$,
Calabi's (admissible) extremal K\"ahler metric, $g_x$, viewed as a $(1,4)$-extremal K\"ahler metric, is ``twinning'' with the
$(b\gz+1,4)$-extremal metric $g_x$, where $b\in (-1,0)\cup(0,1)$ is given by
$b=-\frac{x (1 - 4x + x^2)}{(1 + 2 x - 3 x^2 + 2 x^3)}$. In other words, $g_x$ is both extremal and $(b\gz+1,4)$-extremal and
we have a pair of $4$-weighted extremal K\"ahler twins $(g_x,1)$ and $(g_x,b\gz+1)$.
\end{proposition}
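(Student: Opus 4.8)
The plan is to obtain the proposition as a direct reading of the twin analysis that culminates in Example~\ref{pagetwins}, the genuinely new ingredient being the identification of the parameter value $a=0$ with Calabi's extremal metric, followed by the sign verifications that place the resulting $b$ in $(-1,1)\setminus\{0\}$.

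First I would recall from \S\ref{admisEMsoln} that, up to scale, $g_x$ is $(c\gz+1,4)$-extremal exactly when it is the admissible metric determined by $F_{x,c}$ of \eqref{wextrpol5mnf}, equivalently by the quadratic $P_{x,c}$ via $F_{x,c}=(1-\gz^2)P_{x,c}$. Setting $c=0$ gives the constant Killing potential $f=0\cdot\gz+1=1$, so the $(1,4)$-extremal admissible metric is precisely Calabi's classical extremal metric, determined by $F_{x,0}$. Hence proving the proposition amounts to exhibiting a value $b\in(-1,1)$ with $b\neq 0$ and $P_{x,b}=P_{x,0}$: two parameters with the same $F$ define the same admissible metric, which would then be simultaneously $(1,4)$- and $(b\gz+1,4)$-extremal.

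Next I would invoke the reduction, already carried out before the example, of the two rational equations $P_{x,a}=P_{x,b}$ to the single conic \eqref{twins}, and thence to the explicit formula \eqref{whatbis} for $b$. Specializing to the first Hirzebruch surface, so $s=2/\hirz=2$, and to $a=0$ produces $b=-\frac{x(1-4x+x^2)}{1+2x-3x^2+2x^3}$, which is exactly the quantity studied in Example~\ref{pagetwins}. It then remains to check the range and the distinctness. For $-1<b<1$ I would use the factorizations
$$b+1=\frac{(x+1)(x^2+1)}{2x^3-3x^2+2x+1}>0,\qquad 1-b=\frac{(1-x)(1+4x-3x^2)}{2x^3-3x^2+2x+1}>0,$$
noting that the common denominator $2x^3-3x^2+2x+1$ has strictly positive derivative and value $1$ at $x=0$, hence stays positive on $(0,1)$, and that $1+4x-3x^2>0$ there as well. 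For distinctness, $b=0$ forces $x(1-4x+x^2)=0$, whose only root in $(0,1)$ is $x=2-\sqrt3$; thus $b\neq 0$ precisely for $x\in(0,1)\setminus\{2-\sqrt3\}$, the excluded bifurcation point where the two twins collide.

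The only real obstacle is the algebraic collapse of the pair $P_{x,a}=P_{x,b}$ into the single equation \eqref{twins} and the subsequent solving for $b$; all remaining work is elementary sign-checking of the displayed factorizations. Since that collapse is supplied in the run-up to Example~\ref{pagetwins}, the proposition is in effect a repackaging of that example with the $a=0$ case reinterpreted as Calabi's extremal metric, and the proof reduces to recording the three verifications above.
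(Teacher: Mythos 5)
Your proposal is correct and follows essentially the same route as the paper: the paper derives Proposition~\ref{page} directly from Example~\ref{pagetwins}, which likewise specializes \eqref{whatbis} to $s=2$, $a=0$, verifies $-1<b<1$ via the same two factorizations of $b+1$ and $1-b$, and identifies $x=2-\sqrt{3}$ as the only value where $b=0$. Your added sign justifications for the denominator and for $1+4x-3x^2$ on $(0,1)$ are correct and merely make explicit what the paper leaves to the reader.
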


\begin{remark}
Note that with $c=b=-\frac{x (1 - 4x + x^2)}{(1 + 2 x - 3 x^2 + 2 x^3)}$ and $s=2$, \eqref{em} simplifies to
$$-3 + 18 x^2 - 16 x^3 + 5 x^4=0.$$
The solution, $x_P\in (0,1)$, of this equation has an explicit, albeit complicated, expression.
See e.g. \cite{Leb16} for the explicit K\"ahler class in question and hence a way to find the exact value of $x_P$.
Approximately we have $x_P\approx 0.52$. This corresponds to the K\"ahler class
containing the extremal K\"ahler metric conformal (via $(b\gz+1)^{-2}$) to the Page metric which as previously mentioned is Einstein. 
Thus for $x=x_P\approx 0.52$, Proposition \ref{page} gives us the Page twins.
\end{remark}

\begin{remark}
If we assume $x\in \bbq\cap (0,1) $ in Proposition \ref{page}, then after an appropriate rescale we can consider the Boothby-Wang constructed Sasaki manifold over
the extremal K\"ahler metric corresponding to (as rescale) of $g_x$. Then the $4$-weighted extremal K\"ahler twins represent two extremal rays in the Sasaki cone, where the two extremal Sasaki metrics are genuinely in the exact same 
Sasaki cone, that is, there is no deformation within the isotopy classes, or equivalently there is no movement within the corresponding K\"ahler classes. That is, the two metrics are extremal Sasaki twins.
\end{remark}

\subsection{Focus on the first two Hirzebruch surfaces, $\bbf_1$ and $\bbf_2$}\label{S1andS2}
Here we will assume $s=2$ or $s=1$. That is, $\hirz=1$ or $\hirz=2$ and $\bbf_\hirz$ is the first or second Hirzebruch surface. Note that equation \eqref{twins} define a conic section
$$Aa^2+Bab + Cb^2+Da+Eb+F=0$$
in the variables $a,b$, where $A=C=0$, $B=x (3 x^2-2s x - 1)$, $D=E=(1 + s x - 3 x^2 + s x^3)$, and $F=x (1 - 2s x + x^2)$.
Since $B\neq 0$ for $0<x<1$, we see that we get a hyperbola. In matrix form, the equation may be written
$$ \begin{pmatrix} a&b&1\end{pmatrix}\begin{pmatrix}A&B/2&D/2\\ B/2&C&E/2\\D/2&E/2&F \end{pmatrix}
\begin{pmatrix} a\\b\\1 \end{pmatrix}=0$$
and here
$$\begin{pmatrix}A&B/2&D/2\\ B/2&C&E/2\\D/2&E/2&F \end{pmatrix}=\begin{pmatrix}0&B/2&D/2\\ B/2&0&D/2\\D/2&D/2&F \end{pmatrix}.$$
The determinant of this matrix equals $$\frac{B\left(D^2-BF\right)}{4}=\frac{x (3 x^2-2s x - 1)(1-x^2)^2(1+2sx+(s^2-3)x^2)}{4}\underbrace{\neq}_{0<x<1, s=1,2} 0$$
and thus, for $s=2$ or $s=1$, the solution set to equation \eqref{twins} forms a non-degenerate hyperbola $H_x$ for all $x\in (0,1)$. 

Let $U$ denote the complement of the diagonal in the subset \newline
$\{(a,b)\,|\,0<|a|,|b|<1\}$ of the $ab$-plane.

\noindent
{\bf Claim:} $\forall x\in (0,1),\, H_x\cap U \neq \emptyset$.

\noindent
{\bf Proof of Claim:} The left-hand-side of \eqref{twins} equals a smooth function
$g(s,x,a,b)$. Now observe that, since $0<x<1$,
\begin{itemize}
\item $g(2,x,-1,0)=-(1 + x) (1 + x^2)<0$
\item $g(1,x,-1,0)=-(1-x^2)<0$
\item $g(2,x,0,1)=(1 - x) (1 + 4 x - 3 x^2)>0$
\item $g(1,x,0,1)=(1 - x) (1 + 3 x - 2 x^2)>0.$
\end{itemize}
This tells us that for both $s=2$ and $s=1$ and any $x\in (0,1)$, there must exist at least one solution to \eqref{twins} on the (open) line segment between $(-1,0)$ and $(0,1)$ in the $ab$-plane.
 Since this (open) line segment is inside the square $(-1,1)\times(-1,1)$, does not intersect the diagonal and does not intersect the lines $a=0$, $b=0$, we are done. \qed
 
Due to this claim we have the following result:
\begin{proposition}\label{1st2ndexistence}
In every K\"ahler class on the first and second Hirzebruch surface we have at least one genuine pair of $4$-weighted extremal K\"ahler twins $(g_x,a\gz+1)$ and $(g_x,b\gz+1)$ where neither of them are extremal.
\end{proposition}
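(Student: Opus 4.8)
The plan is to obtain the proposition as an immediate consequence of the Claim, once the dictionary between points of $H_x\cap U$ and genuine non-extremal twin pairs is made explicit. Recall that the admissible $(c\gz+1,4)$-extremal metric $g_{x,c}$ is completely determined by the polynomial $P_{x,c}$ of \eqref{polP}. Hence two parameters $a,b\in(-1,1)$ yield the \emph{same} admissible metric $g_x:=g_{x,a}=g_{x,b}$ exactly when $P_{x,a}=P_{x,b}$; and we have already seen that, once $a\neq b$ is assumed, this is equivalent to the single equation \eqref{twins}, i.e. to $(a,b)$ lying on the hyperbola $H_x$. Such a common metric is then simultaneously $(a\gz+1,4)$- and $(b\gz+1,4)$-extremal, so $(g_x,a\gz+1)$ and $(g_x,b\gz+1)$ form a pair of $4$-weighted extremal K\"ahler twins.

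Next I would pin down which extra constraints make such a pair both genuine and non-extremal. The off-diagonal condition $a\neq b$ ensures the two potentials are distinct and, since both take the value $1$ at $\gz=0$, not constant rescales of one another, so the pair is genuine rather than a bifurcation point. The bound $|a|,|b|<1$ guarantees $a\gz+1\ge 1-|a|>0$ and likewise for $b$, so both weights are positive Killing potentials and the admissibility conditions \eqref{positivityF} hold. Finally, the classical Calabi extremal metric is the one with constant weight, i.e. $c=0$; thus a twin $(g_x,c\gz+1)$ is extremal precisely when $c=0$, and neither twin is extremal exactly when $a\neq 0$ and $b\neq 0$. Collecting these, $(a,b)\in H_x\cap U$ if and only if $(g_x,a\gz+1)$ and $(g_x,b\gz+1)$ are a genuine pair of non-extremal $4$-weighted extremal twins, since $U$ is by definition the off-diagonal part of $\{0<|a|,|b|<1\}$.

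With this dictionary in place the proof concludes in one line: the Claim gives $H_x\cap U\neq\emptyset$ for all $x\in(0,1)$ when $s=1,2$, so for every K\"ahler class (i.e. every such $x$) we may choose any $(a,b)\in H_x\cap U$ and read off the desired twins. The main obstacle is therefore located entirely inside the Claim rather than in the proposition itself. The decisive point there is the intermediate-value argument applied to the left-hand side $g(s,x,a,b)$ of \eqref{twins}: one checks that $g$ has opposite signs at the corners $(-1,0)$ and $(0,1)$ for $s=1,2$ and all $x\in(0,1)$, forcing a zero on the open segment joining them. The genuinely delicate step is the \emph{choice} of that segment, engineered to lie inside $(-1,1)^2$ while avoiding the diagonal and both axes $a=0,\ b=0$, so that the root automatically falls in $U$ and not merely on $H_x$; the four sign evaluations themselves reduce to routine factorizations.
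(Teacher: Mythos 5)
Your proposal is correct and follows essentially the same route as the paper: the dictionary ``$(a,b)\in H_x\cap U$ $\Leftrightarrow$ genuine non-extremal twin pair'' is exactly how the paper reads off Proposition~\ref{1st2ndexistence} from its Claim, and your intermediate-value argument on the open segment from $(-1,0)$ to $(0,1)$ (chosen to avoid the diagonal and both axes) is precisely the paper's proof of that Claim, using the same four sign evaluations of $g(s,x,\cdot,\cdot)$. The only added value in your write-up is that you spell out why $a\neq b$ rules out constant rescales (both potentials equal $1$ at $\gz=0$) and why non-extremality amounts to $a,b\neq 0$, points the paper leaves implicit.
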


Of course, since $H_x$ is a non-degenerate hyperbola the claim above actually ensures that we have an uncountable (one parameter) number of pairs of $4$-weighted extremal K\"ahler twins in each K\"ahler class.
To illustrate this, we include the figure below showing $H_{1/6}, H_{2/6}, H_{3/6}, H_{4/6}$, and $H_{5/6}$ for $s=2$. Note that here the curves move toward the top right corner as $x$ increases.

\bigskip

\includegraphics[scale=0.5]{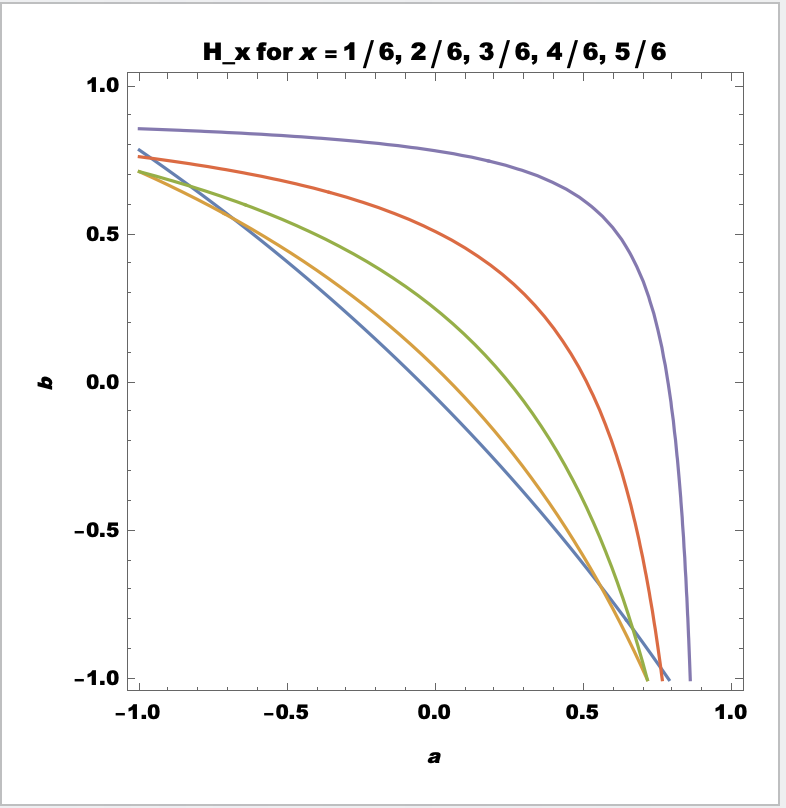}

\subsubsection{The LeBrun Einstein-Maxwell Twins on the First Hirzebruch Surface}
Let us now focus further on the first Hirzebruch surface. Since $s=2$, equation \eqref{twins} for potential $4$-weighted extrema K\"ahler twins $(g_x,a\gz+1)$ and $(g_x,b\gz+1)$
becomes

\begin{equation}\label{twins1stHirzebruch}
x (1 - 4 x + x^2)+(1 + 2 x - 3 x^2 + 2 x^3)a+(1 + 2 x - 3 x^2 + 2 x^3)b-x (1 + 4 x - 3 x^2)ab=0.
\end{equation}

Viewed another way, and as we saw in Example \ref{pagetwins}, for a given
admissible $4$-weighted extremal K\"ahler metric $(g_x,a\gz+1)$ (with $-1<a<1$) in the K\"ahler class given by $x\in (0,1)$, we have a potential twin $(g_x,b\gz+1)$ given by
\begin{equation}\label{potentialtwin}
b=-\frac{x (1 - 4x + x^2)+a(1 + 2 x - 3 x^2 + 2 x^3)}{(1 + 2 x - 3 x^2 + 2 x^3+a x (-1 - 4 x + 3 x^2))},
\end{equation}
as long as $-1<b<1$ (and - for a genuine twin - $b\neq a$).

Let us assume that $a\in (-1,1)$ solves \eqref{em2}, with $s=2$, and thus corresponds to one of the strongly Hermitian solutions of the Einstein-Maxwell equations\footnote{$h=(a\,\gz+1)^{-2}g_x$ is
{\em conformally K\"ahler, Einstein-Maxwell.}} due to LeBrun in \cite{Leb16}.
That is, $Scal_{a\,\gz+1,4}(g_x)$ is constant and 

\begin{equation}\label{em3}
\left(xa^2 -2 a+x\right) \left(  xa^2-x a +(1-x)\right)=0.
\end{equation}

As discovered in \cite{Leb16}, \eqref{em3} (as an equation in $a$) has solutions
$$
\begin{array}{ccl}
a_1&=&\frac{1-\sqrt{1-x^2}}{x}\quad\text{valid for all $0<x<1$}\\
\\
a_{21} &=& \frac{x-\sqrt{x(5 x-4)}}{2 x}\quad\text{valid for $4/5\leq x<1$}\\
\\
a_{22} &=&  \frac{x+\sqrt{x(5 x-4)}}{2 x} \quad\text{valid for $4/5\leq x<1$.}
\end{array}
$$
Note that when $x=4/5$, $a_1=a_{21}=a_{22}$, and for $4/5<x<1$, the three solutions are distinct. 
Thus, when $0<x\leq4/5$ we have precisely one solution and when $4/5<x<1$ we have three distinct solutions.

It is not hard to check that \eqref{twins1stHirzebruch} is solved by $(a,b)=(a_{21},a_{22})$ and this recovers precisely the
LeBrun Einstein-Maxwell Twins. If we denote the shared K\"ahler metric in the K\"ahler class given by $x$, corresponding to $a_{21}$ and $a_{22}$,
by $g_{x,2}$, then for each $x\in (4/5,1)$, the LeBrun Einstein-Maxwell twins is the pair $(g_{x,2},a_{21}\gz+1), (g_{x,2},a_{22}\gz+1)$.
On the other hand, if we denote the K\"ahler metric in the K\"ahler class given by $x$, corresponding to $a_{1}$, by $g_{x,1}$, we have
for all $x\in (0,1)$ a solution to the Einstein-Maxwell equations given by $(g_{x,1},a_1\gz+1)$. This solution also has a twin, $(g_{x,1},b_1\gz+1)$
obtained by letting $b_1=b$ when $a=a_1$ in \eqref{potentialtwin}\footnote{One can check that
$b_1=\frac{a_1 \left(a_1^4+4 a_1^3-6 a_1^2+12 a_1-3\right)}{a_1^4+12 a_1^3-10 a_1^2+4 a_1+1},$
and for $0<x<1$, $0<a_1<1$, we have $-1<b_1<1$.}

However, $(g_{x,1},b_1\gz+1)$ does not correspond to a solution of  \eqref{em3} and
$(g_{x,1},b_1\gz+1)$ is therefore not a solution to the Einstein-Maxwell equations. The figure below shows $a_1,b_1,a_{21}$, and $a_{22}$ as functions of
$x\in (0,1)$. The red and green curves correspond to the LeBrun Einstein-Maxwell Twins ($a_{21}$ and $a_{22}$), the blue curve is $a_1$ and the orange curve is $b_1$.

\includegraphics[scale=0.3]{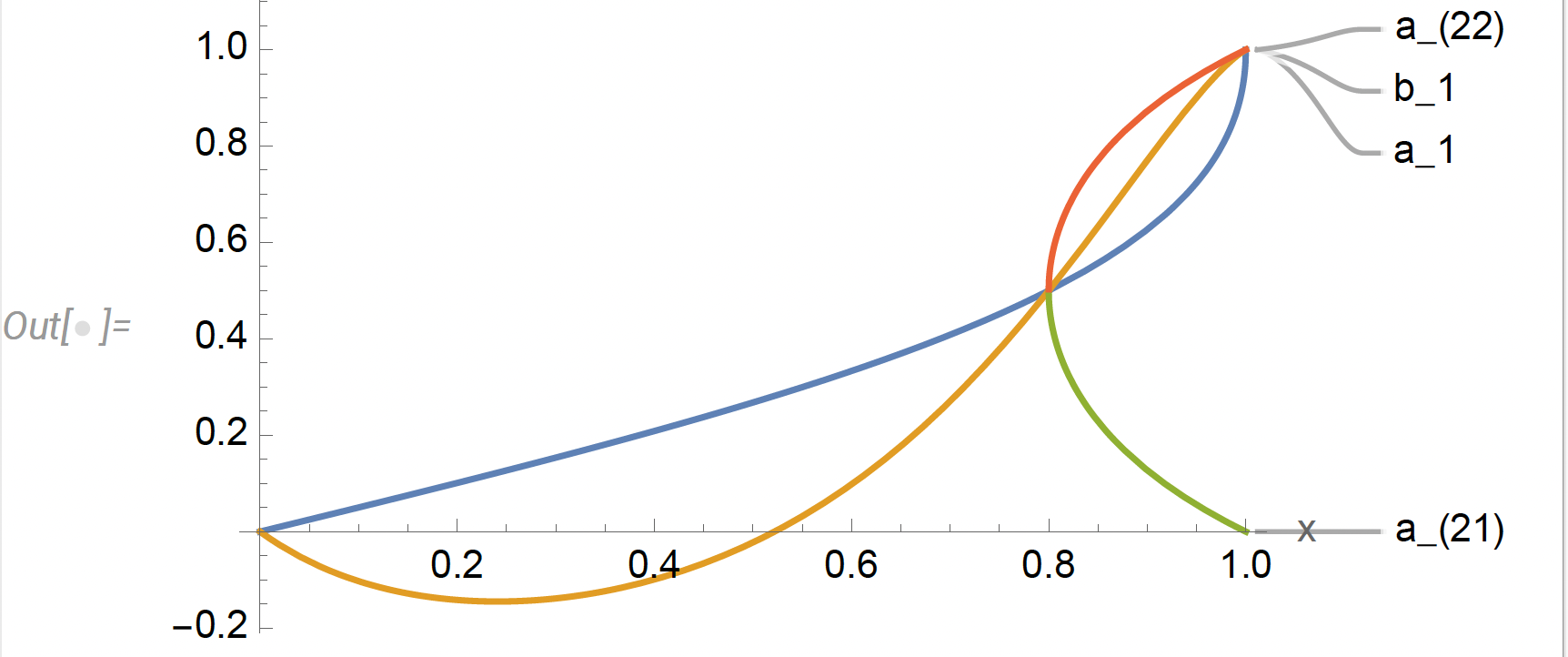}

At the $x$ value where the orange curve crosses the $x$-axis we have, once more,  the Page twins sitting on the orange and blue curve, respectively.

\begin{remark}\label{FODeSouza}
We would be remiss if we did not mention that, in addition to LeBrun's solutions in \cite{Leb16}, certain Hirzebruch surfaces also admit solutions to the strongly Hermitian Einstein-Maxwell equations where the corresponding $4$-weighted extremal K\"ahler metric is non-admissible, i.e., not of Calabi type, and ambitoric in the sense of \cite{ApCaGa16}. This existence result is due to work by Viza De Souza \cite{VizaDS21} building on work by Futaki and Ono \cite{FutOno18,FutOno19} as well as Apostolov and Maschler \cite{ApMa19}. It would be very interesting to explore whether any of these solutions could be part of a pair of $4$-weighted extremal K\"ahler twins.
\end{remark}

\subsubsection{The Sasaki Einstein metric over the first Hirzebruch surface and related twins}
Continuing our focus on the first Hirzebruch class we note that for $x=1/2$, the admissible K\"ahler class agrees (up to scale) with the first Chern class.
Thus the corresponding Boothby-Wang constructed Sasaki structure is Gorenstein and the unique solution to \eqref{cscS1} (with complex dimension $n=2$ and manifested in \eqref{cscS3}), corresponds to a
Sasaki-Einstein metric (on an $\eta$-Einstein ray). It is not hard to check that the solution to \eqref{cscS3} for $s=2$ and $x=1/2$ is $c=\frac{1}{3} \left(4-\sqrt{13}\right)$.
Setting $s=2$, $x=1/2$ and $a=\frac{1}{3} \left(4-\sqrt{13}\right)$ in \eqref{twins}, yields the trivial solution
$b=\frac{1}{3} \left(4-\sqrt{13}\right)=a$. Thus, the Sasaki Einstein metric does not have an extremal Sasaki twin in this sub cone (given by $f=c\gz+1$) of its Sasaki cone. [We shall see later in \S \ref{quadrilaterals} that it has no twin at all.]
On the other hand for the cscS solution with $s=2$ and $x=\frac{1}{3}$ we obtain a weighted extremal Sasaki  twin by taking 
$$a=c=\frac{5-2\sqrt{5}}{5}, \qquad b=\frac{-45 + 19 \sqrt{5}}{25 + 9 \sqrt{5}}.$$

\subsection{The third and higher Hirzebruch surfaces}

Focussing on the Hirzebruch surfaces of degree three or higher, we now
assume that $s=2/\hirz$ with $\hirz=3,4,5,\dots$. This means that $s \leq 2/3$ and 
one can check that then the hyperbola in $a,b$ given by \eqref{twins} will be degenerate for exactly two values of $x$ in $(0,1)$, namely when 
$x=\frac{1}{3} \left(\sqrt{s^2+3}+s\right)$ or $x=-\frac{s+\sqrt{3}}{s^2-3}$ (See the discussion at the beginning of \S \ref{S1andS2}). 
It can also be checked directly that
for $x=\frac{1}{3} \left(\sqrt{s^2+3}+s\right)$ or $x=-\frac{s+\sqrt{3}}{s^2-3}$, \eqref{twins} has no solutions with $-1<a,b<1$.

Note now that since $0<s\leq 2/3$ we have that
$0<s<\frac{1}{3} \left(\sqrt{s^2+3}+s\right)<-\frac{s+\sqrt{3}}{s^2-3}<1.$
Thus, with $s\leq 2/3$, the solution set to equation \eqref{twins} forms a non-degenerate hyperbola $H_x$ for all $x\in (0,s]\subset (0,1)$. 
Let $W$ denote the complement of the diagonal in the subset
$(-1,1)\times(-1,1)$ of the $ab$-plane.

\bigskip

\noindent
{\bf Claim:} $\forall s=2/\hirz \leq 2/3, \forall x\in (0,s],\, H_x\cap W \neq \emptyset$.

\bigskip

\noindent
{\bf Proof of Claim:} 
Assuming $0<x\leq s\leq 2/3$, it is not hard to see that, if we view the left hand side of equation \eqref{twins} as a smooth function $g(s,x,a,b)$, then,
$$
\begin{array}{ccl}
g(s,x,-1,-1)&=&-2(1+x)^2(1+(s-2)x) <0\\
\\
g(s,x,1,0)&=& (1-x)(1+(2-x)x+sx(1-x))>0.
\end{array}
$$
This tells us that for $0<x\leq s=2/\hirz \leq 2/3$, there is at least one solution to \eqref{twins} on the open line segment between $(-1,-1)$ and $(1,0)$ in the $ab$-plane. 
Since this open line segment is inside the square $(-1,1)\times(-1,1)$ and does not intersect the diagonal, we are done. \qed

Due to this claim we have the following result:
\begin{proposition}\label{higherhirzeexistence}
For each K\"ahler class determined by $0<x\leq 2/\hirz$ on the third and higher Hirzebruch surfaces ($\bbf_\hirz$ for $\hirz=3,4,5,\dots$) 
we have at least one genuine pair of $4$-weighted extremal K\"ahler twins $(g_x,a\gz+1)$ and $(g_x,b\gz+1)$.
\end{proposition}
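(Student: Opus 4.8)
The plan is to read off Proposition~\ref{higherhirzeexistence} as an immediate corollary of the Claim just established, which already carries out the only analytic step (an intermediate value argument). First I would fix $\hirz\geq 3$, so that $s=2/\hirz\leq 2/3$, and note that the K\"ahler classes in the stated range are exactly those with $x\in(0,2/\hirz]=(0,s]$, precisely the range to which the Claim applies. Invoking the Claim, for each such $x$ I obtain a point $(a,b)\in H_x\cap W$.

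Next I would unpack the two memberships. Lying on $H_x$ means $g(s,x,a,b)=0$, i.e.\ that \eqref{twins} holds for $(a,b)$; lying in $W$ means $a,b\in(-1,1)$ with $a\neq b$. I would then appeal to the reduction recorded in \S\ref{hirzetwins}: once $a\neq b$, equation \eqref{twins} is equivalent to $P_{x,a}=P_{x,b}$, hence to $F_{x,a}=F_{x,b}$. Consequently the two admissible constructions collapse to a single metric $g_x:=g_{x,a}=g_{x,b}$ which is at once $(a\gz+1,4)$-extremal and $(b\gz+1,4)$-extremal. Because $a,b\in(-1,1)$, the observation in \S\ref{admisEMsoln} that $F_{x,c}$ satisfies the conditions \eqref{positivityF} for every $c\in(-1,1)$ guarantees that $g_x$ is a genuine admissible metric.

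Finally I would verify Definition~\ref{twinsKahler}. For $a\in(-1,1)$ the function $a\gz+1$ is strictly positive on $[-1,1]$ (its minimum is $1-|a|>0$) and is a Killing potential since $\gz$ is, and likewise for $b\gz+1$. Moreover $b\gz+1=\lambda(a\gz+1)$ forces $\lambda=1$ (matching constant terms) and then $b=a$; since $a\neq b$, the two potentials are not constant rescales of one another. Thus $(g_x,a\gz+1)$ and $(g_x,b\gz+1)$ form a genuine pair of $4$-weighted extremal K\"ahler twins, as asserted.

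The point worth isolating---rather than any genuine obstacle---is that the single condition $a\neq b$ does double duty: it is exactly what licenses dividing out the factor $(a-b)$ in passing from the twin system to \eqref{twins}, and it is simultaneously what makes the two potentials genuinely distinct (non-rescaling). All of the hard existence work is already contained in the Claim, so no further estimates are needed here.
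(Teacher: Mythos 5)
Your proof is correct and follows the same route as the paper, which simply deduces the proposition from the Claim that $H_x\cap W\neq\emptyset$ for $0<x\leq s=2/\hirz\leq 2/3$. The extra details you supply (the equivalence of \eqref{twins} with $P_{x,a}=P_{x,b}$ when $a\neq b$, the positivity conditions \eqref{positivityF} holding for all $c\in(-1,1)$, and the non-proportionality of the two Killing potentials) are exactly the facts the paper has already established in \S\ref{admisEMsoln} and \S\ref{hirzetwins} and leaves implicit here.
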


Once again, due to non-degeneracy of the hyperbola, for each K\"ahler class determined by $0<x\leq 2/\hirz$, 
we have an uncountable (one parameter) number of pairs of $4$-weighted extremal K\"ahler twins. The figure below shows $H_x$ for $x=1/\hirz$ and
$s=2/\hirz$ when $\hirz=3, 4, 5, 6, 7, 8$. Here the curves are moving towards the upper right corner as $\hirz$ increases.

\bigskip

\includegraphics[scale=0.4]{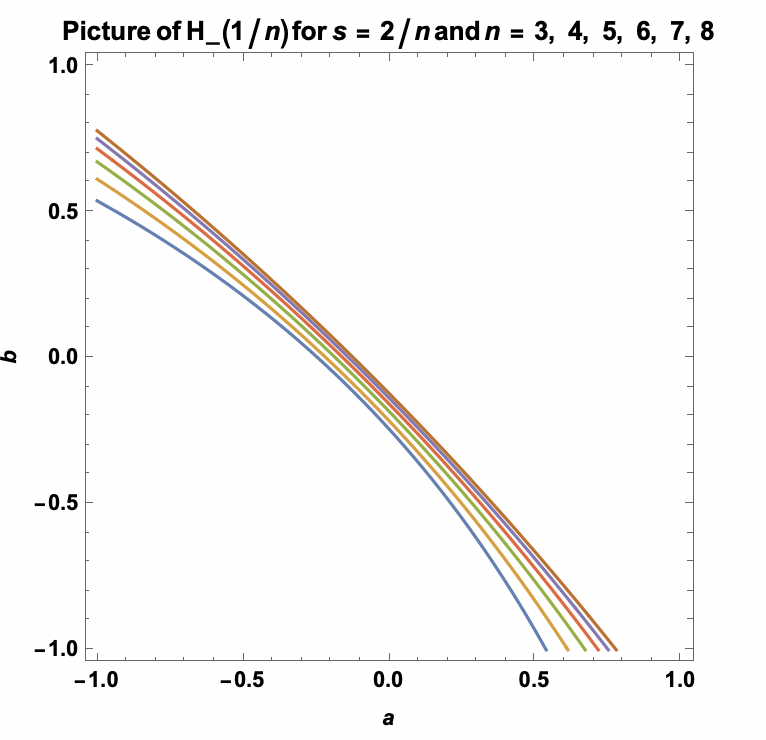}

\bigskip

Finally, we also present some sporadic explicit solutions:
\begin{itemize}
\item Equation \eqref{twins} is solved for $s=2/\hirz, x=1/\hirz, a=1/\hirz,$ and $b=-2/\hirz$ and for $\hirz\geq 3$ this corresponds to a genuine and explicit pair of  $4$-weighted extremal K\"ahler twins, $(g_{\frac{1}{\hirz}},\frac{1}{\hirz}\gz+1)$ and $(g_{\frac{1}{\hirz}},\frac{-2}{\hirz}\gz+1)$,
on the $\hirz$th Hirzebruch surface.
\item Equation \eqref{twins} is solved for $s=2/\hirz, x=1/\hirz, a=0,$ and $b=b_\hirz=-\frac{\hirz \left(\hirz^2-3\right)}{\hirz^4-\hirz^2+2}$. For $\hirz\geq 2$, we can check that $-1<b_\hirz<1$ and thus this solution corresponds to a genuine and explicit pair of $4$-weighted extremal K\"ahler twins, 
$(g_{\frac{1}{\hirz}},1)$ and $(g_{\frac{1}{\hirz}},b_\hirz\gz+1)$,
on the $\hirz$th Hirzebruch surface, where here $g_{\frac{1}{\hirz}}$ is Calabi Extremal. Note that the second Hirzebruch surface is included in this example.

\end{itemize}

\subsection{The trivial Hirzebruch Surface $\bbf_0$}\label{0hirze}
We end \S \ref{KahlerHirze} by considering the $0^{th}$ Hirzebruch surface $\bbf_0:= \bbc\bbp^1\times\bbc\bbp^1$. While product metrics on $\bbf_0$ may be considered as degenerate cases of the admissible set-up in \S \ref{hirze}, we will take a direct approach to such metrics.
Indeed a product K\"ahler structure $(g,\omega)$ on $\bbc\bbp^1\times\bbc\bbp^1$ can be described on a open dense set as follows:
\begin{equation}\label{productmetric}
g= \frac{1}{A(x)} dx^2+A(x) dt^2 + \frac{1}{B(y)} dy^2 + B(y) ds^2, \quad \omega = dx\wedge dt + dy\wedge ds,
\end{equation}
where $\alpha,\beta >0$, $-\alpha<x<\alpha$, $-\beta <y<\beta$, $t,s$ are angular coordinates, and $A(x)$, $B(y)$ are positive smooth functions. 
For $g$ to extend to all of $\bbc\bbp^1\times\bbc\bbp^1$ as a smooth K\"ahler metric we
have the following boundary conditions:
\begin{align}
\begin{split}
A(\pm \alpha)=0 \; \; & \;\; B(\pm \beta)=0\\
A'(\pm \alpha)=\mp 2\; \; & \;\; B'(\pm \beta)=\mp 2
\end{split}
\end{align}
Varying the values of $\alpha,\beta \in \bbr^+$ sweeps out the K\"ahler cone of $\bbc\bbp^1\times\bbc\bbp^1$ with product metrics. 

We know from \cite{Leb15} that with $0<\alpha<\beta<2\alpha$, $f>0$ a certain non-constant affine function of
$y$, $A(x) = (\alpha^2-x^2)/\alpha$, and $B(y)$ is an appropriate $4^{th}$ degree polynomial, the metric of the form \eqref{productmetric}
is a (non-$cscK$) $Scal_{f,4}(g)$-constant K\"ahler metric on
$\bbc\bbp^1\times\bbc\bbp^1$.

More generally and specifically, let $\alpha,\beta >0$ and $c$ be a real number such that $c^2\beta^2<(\alpha^2+\beta^2)^2$ and consider the positive linear affine function $f(y)=\alpha^2+\beta^2+cy$ for $-\beta<y<\beta$. Then
with 
$$A(x) = (\alpha^2-x^2)/\alpha$$ and 
$$B(y)=\frac{(\beta^2-y^2) \left(6 \alpha (\alpha^2 + \beta^2)^2+(\beta-\alpha) c^2 \beta^2-(\alpha + \beta) c^2y^2\right)}{2 \alpha \beta \left(3 \left(\alpha^2+\beta^2\right)^2-\beta^2 c^2\right)},$$
we get a smooth K\"ahler product metric\footnote{It is easy to check that $B(y)>0$ for $-\beta<y<\beta$, $B(\pm \beta)=0$, and that $B\,'(\pm \beta)=\mp2$.}, $g_c$, of the form \eqref{productmetric}, 
on $\bbc\bbp^1\times\bbc\bbp^1$ satisfying that $Scal_{f,4}(g)$ is a linear affine function function of $y$:
$$ 
\begin{array}{ccl}
Scal_{f,4}(g) &= &\frac{6 \left((\alpha+\beta) \left(\alpha^2+\beta^2\right)^4 -6 \alpha \beta^2 (\alpha^2 + \beta^2)^2 c^2  +(\alpha - \beta) \beta^4c^4 \right)}{\alpha \beta \left(3 \left(\alpha^2+\beta^2\right)^2-\beta^2 c^2\right)}\\
\\
&-& \frac{12c \left( (\alpha^2 + \beta^2)(  (2 \alpha - \beta)(\alpha^2 + \beta^2)^2 + \beta^3 c^2)\right)}{\alpha \beta \left(3 \left(\alpha^2+\beta^2\right)^2-\beta^2 c^2\right)}y
\end{array}
$$
This gives a family of $(f,4)$-extremal K\"ahler metrics, containing LeBrun's Einstein-Maxwell solutions  \cite{Leb15} (when $c$ is such that $Scal_{f,4}(g)$ is constant) as a special cases.
Moreover, since $B(y)$ for $c$ and $-c$ are the same, we have that $g_c=g_{-c}$ (let us denote this by $g_{|c|}$).Thus we have $4$-weighted extremal K\"ahler twins $(g_{|c|},\alpha^2+\beta^2+cy)$, $(g_{|c|},\alpha^2+\beta^2-cy)$.
One can also easily check that this is the only possibility of twinning among this family of $4$-weighted extremal K\"ahler metrics.

Note that when $\alpha$ and $\beta$ are rational, the rays corresponding to $f(y)=\alpha^2+\beta^2\pm cy$ in the unreduced Sasaki-Reeb cone of the Boothby-Wang constructed Sasaki manifold with respect to an
appropriate rescale of $g_c$ (and its K\"ahler form) are related under a transformation in the Weyl group $\bbz_2$ acting on the cone.
Furthermore, in that case, the Sasaki metric corresponding to $f(y)=\alpha^2+\beta^2+ cy$ is $cscS$ precisely when $Scal_{f,4}(g)/f$ equals a constant. From the above expression for $Scal_{f,4}(g)$, a calculation shows that this
is equivalent to $c$ satisfying the equation
$$c^2=\frac{(\beta-5\alpha)}{(\beta-\alpha)}\frac{(\alpha^2 + \beta^2)^2}{\beta^2}.$$
For $\beta>5\alpha$ this has viable solutions $c=\pm \sqrt{\frac{(\beta-5\alpha)}{(\beta-\alpha)}}\frac{(\alpha^2 + \beta^2)}{\beta}$, recovering the ``extra'' $cscS$ solutions from \cite{Leg10} on the Wang-Ziller join. From the above discussion we can see that these $cscS$ are in fact extremal Sasaki twins.

\section{Admissible Ruled surfaces over Riemann surfaces of any genus}\label{highergenus}
In this section we show that $4$-weighted extremal K\"ahler twins exist outside of the realm of Hirzebruch surfaces (and other toric K\"ahler manifolds). To do this we simply supply related  examples that can be directly derived from a generalization of the set-up in 
\S \ref{KahlerHirze}. 

Let $\Sigma_{\mathfrak g}$  be a Riemann surface of any genus ${\mathfrak g}$, let $\hirz\in \bbz^+$, and
let $g_{\Sigma_{\mathfrak g}}$ be a K\"ahler Einstein metric on $\Sigma_{\mathfrak g}$ of constant scalar curvature $2s=\frac{4(1-{\mathfrak g})}{\hirz}$, with K\"ahler form
$\omega_{\Sigma_{\mathfrak g}}$. Now let $L_\hirz\rightarrow \Sigma_{\mathfrak g}$ be a holomorphic line bundle over $\Sigma_{\mathfrak g}$ such that $c_1(L_\hirz)=[\frac{\omega_{\Sigma_{\mathfrak g}}}{2 \pi}]$.
Then $c_1(L_\hirz)=\hirz\Omega_{\mathfrak g}$ with $\Omega_{\mathfrak g}$ being the standard primitive, integer
K\"ahler class on $\Sigma_{\mathfrak g}$. Following e.g. \cite{ACGT08} we have an \emph{admissible manifold} $S_\hirz$ 
of the form $\bbp({\mathcal O} \oplus L_\hirz) \rightarrow \Sigma_{\mathfrak g}$ and, similarly to \S \ref{hirze} we can exhaust the K\"ahler cone (up to scale) with
admissible metrics $g_x$ of the form \eqref{metric}, where 
$x\in (0,1)$ still determines the K\"ahler class (up to scale) via \eqref{class1}. Further, following \cite{ApMaTF18}, for each choice of $x\in (0,1)$ and $c\in (-1,1)$ we have a function $F_{x,c}(\gz)$
defined by \eqref{wextrpol5mnf} that satisfies (ii) and (iii) of \eqref{positivityF} and would give us an 
admissible $(c\,\gz+1,4)$-extremal metric $g_{x,c}$ provided that also (i) of \eqref{positivityF}  is satisfied. It is the latter condition that is not guaranteed in general when ${\mathfrak g}>1$.
As long as we are aware of this, we can use everything else that was developed in \S \ref{hirzetwins} to find $4$-weighted extremal K\"ahler twins.

Recall that 
$$F_{x,c}(\mathfrak{z})=(1-\gz^2)P_{x,c}(\gz),$$
where $P_{x,c}(\gz)$ is given by \eqref{polP}.
Thus, (i) of \eqref{positivityF} is equivalent to $P_{x,c}(\gz) >0$ for $-1<\gz <1$.
Now we notice that if $c=x$, then $P_{x,c}(\gz) = (1+x \gz)$, which is clearly positive for $-1<\gz <1$ since $x\in (0,1)$.
Thus any solution $(a,b)$ to \eqref{twins} with $a=x$ such that $-1<b<1$ would produce a pair of $4$-weighted extremal K\"ahler twins.
Setting $a=x$ in \eqref{twins} gives us formally a solution $b\neq a$.
\begin{equation}\label{bhighergenusex}
b=\frac{x (2 - s x)}{3 x^2-s x-1}.
\end{equation}
Thus, for $-1<b<1$ to be true, $s$ and $x$ need to satisfy that $-1<\frac{x (2 - s x)}{3 x^2-s x-1}<1$. Clearly there are many choices for $s=\frac{2(\mathfrak g -1)}{\hirz}$ and $x\in (0,1)$ where this is satisfied and thus
we have many examples of $4$-weighted extremal K\"ahler twins.

To make this explicit and tie it in with extremal Sasaki twins we will find solutions that correspond to smooth $M_{\mathfrak g}\ast_{l_1,l_2}S^3_\bfw$ joins as described in e.g. \cite{BoTo14a,BoTo13} where
$M_{\mathfrak g}$ is the canonical Sasaki manifold over $\Sigma_{\mathfrak g}$. This means that we want to cast $S_\hirz$ with the K\"ahler class determined by $x$ via
\eqref{class1} as the regular quotient of $M_{\mathfrak g}\ast_{l_1,l_2}S^3_\bfw$. This process is discussed in \S 4.4 of \cite{BHLT16}. 
Since we want to pick cases where $M_{\mathfrak g}\ast_{l_1,l_2}S^3_\bfw$ is a $S^3$-bundle over 
$\Sigma_{\mathfrak g}$ we will chose $l_2=1$ in accordance with Proposition 7.6.7 in \cite{BG05}. We also want to arrive at examples for both the twisted and 
untwisted bundle of each possible genus $\mathfrak g$.

Following e.g. \S 4.4 of \cite{BHLT16} we see that the correspondence between the $M_{\mathfrak g}\ast_{l_1,1}S^3_\bfw$  and $S_\hirz$ with K\"ahler class determined by $x$ is as follows
\begin{equation}
\hirz=l_1(w_1-w_2) \quad x=\frac{w_1-w_2}{w_1+w_2}
\end{equation}
and by Theorem 3.5 in \cite{BoTo13} we know that $M_{\mathfrak g}\ast_{l_1,1}S^3_\bfw$ is diffeomorphic
to the trivial (untwisted) bundle  $\Sigma_{\mathfrak g} \times S^3$ if $\hirz$ is even and diffeomorphic to the non-trivial (twisted) $S^3$-bundle over $\Sigma_{\mathfrak g}$ if $\hirz$ is odd.

We look at four cases:
\begin{enumerate}
\item Untwisted:
\begin{enumerate}
\item $w_1=11$, $w_2=9$ and $l_1=1$ giving $x=1/10$, $\hirz=2$, and hence $s=(1-\mathfrak g)$
\item $w_1=11$, $w_2=9$ and $l_1=2$ giving $x=1/10$, $\hirz=4$, and hence $s=\frac{(1-\mathfrak g)}{2}$
\end{enumerate}
\item Twisted
\begin{enumerate}
\item $w_1=51$, $w_2=50$ and $l_1=1$ giving $x=1/101$, $\hirz=1$, and hence $s=2(1-\mathfrak g)$
\item $w_1=51$, $w_2=50$ and $l_1=3$ giving $x=1/101$, $\hirz=3$, and hence $s=\frac{2(1-\mathfrak g)}{3}.$
\end{enumerate}
\end{enumerate}

We now see that for each non-negative integer value of $\mathfrak g$, there is at least one untwisted and one twisted sub-case above where $b$ in \eqref{bhighergenusex} satisfies $-1<b<1$..
\begin{enumerate}
\item Untwisted:
When $x=1/10$ we get
$$b=\frac{s-20}{10 s+97}.$$
\begin{enumerate}
\item For $s=(1-\mathfrak g)$, this becomes
$$b=\frac{\mathfrak g +19}{10 \mathfrak g-107}.$$
\item For $s=\frac{(1-\mathfrak g)}{2}$, this becomes
$$b=\frac{\mathfrak g +39}{2(5 \mathfrak g-102)}.$$
\end{enumerate}
It is a simple exercise to check for all non-negative integer values of $\mathfrak g$ at least one of
$\frac{\mathfrak g +19}{10 \mathfrak g-107}$ and $\frac{\mathfrak g +39}{2(5 \mathfrak g-102)}$ will be inside the interval $(-1,1)$.

\item Twisted:
When $x=1/101$ we get
$$b=\frac{s-202}{101 s+10198}.$$
\begin{enumerate}
\item For $s=2(1-\mathfrak g)$, this becomes
$$b=\frac{\mathfrak g +100}{101 \mathfrak g-5200}.$$
\item For $s=\frac{2(1-\mathfrak g)}{3}$, this becomes
$$b=\frac{\mathfrak g +302}{(101 \mathfrak g-15398)}.$$
\end{enumerate}

Again, it is easy to check that for all non-negative integer values of $\mathfrak g$ at least one of
$\frac{\mathfrak g +100}{101 \mathfrak g-5200}$ and $\frac{\mathfrak g +302}{(101 \mathfrak g-15398)}$ will be inside the interval $(-1,1)$.
\end{enumerate}

We can now conclude with the following proposition:
\begin{proposition}
For any genus  $\mathfrak g$ there is choice of an even (as well as odd) $\hirz\in \bbz^+$ such that the ruled surface
$S_\hirz=\bbp({\mathcal O} \oplus L_\hirz) \rightarrow \Sigma_{\mathfrak g}$ has a pair of non-trivial $4$-weighted extremal K\"ahler twins where
the K\"ahler class of the shared K\"ahler metric can be taken to be a primitive integer cohomology class.

Further, the even $\hirz$, together with an appropriate K\"ahler class, can be chosen such that this gives a pair of extremal Sasaki twins on $\Sigma_{\mathfrak g} \times S^3$ 
and 
the odd $\hirz$, together with an appropriate K\"ahler class, can be chosen such that this gives a pair of extremal Sasaki twins on the non-trivial $S^3$-bundle over $\Sigma_{\mathfrak g}$.
In both cases, the $CR$-structure arises from a smooth join $M_{\mathfrak g}\ast_{l_1,1}S^3_\bfw$.
\end{proposition}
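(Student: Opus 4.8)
The plan is to exploit the single positivity-friendly choice $c=x$. The computation preceding \eqref{bhighergenusex} gives $P_{x,x}(\gz)=1+x\gz$, hence $F_{x,x}(\gz)=(1-\gz^2)(1+x\gz)$; since $x\in(0,1)$ this manifestly satisfies all three conditions of \eqref{positivityF}, so the admissible metric $g_x$ attached to $c=x$ is a genuine $(x\gz+1,4)$-extremal Kähler metric on $S_\hirz$ for \emph{every} genus $\mathfrak g$. This is the crucial point: because a twin pair shares one underlying metric $g_x=g_{x,a}=g_{x,b}$, the positivity condition (i) of \eqref{positivityF} — the only one not automatic when $\mathfrak g>1$ — need be verified only once, and the choice $a=x$ arranges it for free.

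Next I would set $a=x$ in the twin relation \eqref{twins} and solve for the partner value, obtaining $b$ as in \eqref{bhighergenusex}. A one-line check shows that $b=x$ would force $3x^2=3$, i.e.\ $x=1$, which is excluded; so whenever $b$ is admissible the twin is genuinely non-trivial (the two potentials $x\gz+1$ and $b\gz+1$ are not rescales). Thus the entire problem reduces to producing, for each $\mathfrak g$, data giving $-1<b<1$ together with the prescribed even or odd parity of $\hirz$ and a rescaled primitive integer Kähler class.

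I would then pass to the Sasaki picture via the join $M_{\mathfrak g}\ast_{l_1,1}S^3_\bfw$, taking $l_2=1$ as in Proposition~7.6.7 of \cite{BG05} so that the total space is an $S^3$-bundle over $\Sigma_{\mathfrak g}$. The correspondence $\hirz=l_1(w_1-w_2)$, $x=\frac{w_1-w_2}{w_1+w_2}$ realizes $(S_\hirz,[\omega_x])$ as the regular quotient, rationality of $x$ lets $[\omega_x]$ rescale to a primitive integer class, and Theorem~3.5 of \cite{BoTo13} identifies the diffeomorphism type as $\Sigma_{\mathfrak g}\times S^3$ when $\hirz$ is even and the twisted $S^3$-bundle when $\hirz$ is odd. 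Since $n=2$ and $p=4=n+2$, the weighted extremal Kähler twins lift to extremal Sasaki structures with commuting non-colinear Reeb fields, i.e.\ extremal Sasaki twins. I would select the two untwisted data sets ($w_1=11,w_2=9$, $l_1\in\{1,2\}$, $x=1/10$, $\hirz\in\{2,4\}$) and the two twisted sets ($w_1=51,w_2=50$, $l_1\in\{1,3\}$, $x=1/101$, $\hirz\in\{1,3\}$), substitute $s=2(1-\mathfrak g)/\hirz$ into \eqref{bhighergenusex}, and read off the four displayed rational functions of $\mathfrak g$.

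The main obstacle, and really the only genuine verification, is confirming that for every non-negative integer $\mathfrak g$ at least one of the two untwisted $b$-values, and at least one of the two twisted ones, lies in $(-1,1)$. The difficulty is that the denominator $3x^2-sx-1$ vanishes and changes sign as $\mathfrak g$ grows, since $s$ is affine in $\mathfrak g$, so no single data set works for all genera. I would resolve this by analysing the two choices in each family together: both share the same expression in $s$, with the forbidden range $b\notin(-1,1)$ amounting to a bounded interval of $s$-values, but the two affine reparametrizations $s=s(\mathfrak g)$ have different slopes, pushing the resulting forbidden $\mathfrak g$-intervals into \emph{disjoint} ranges (for the untwisted family these are the integer blocks around $\mathfrak g\in[8,14]$ and $\mathfrak g\in[15,27]$, and analogously for the twisted family). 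Hence wherever one $b$ leaves $(-1,1)$ the other stays safely inside, and checking the handful of transitional integers completes the claim, yielding the asserted extremal Sasaki twins on both the trivial and the non-trivial $S^3$-bundle over $\Sigma_{\mathfrak g}$.
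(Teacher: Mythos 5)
Your proposal is correct and follows essentially the same route as the paper: the $c=x$ choice to secure positivity of $F_{x,c}$, the partner value \eqref{bhighergenusex}, the same four join data sets $(w_1,w_2,l_1)$ with $x=1/10$ and $x=1/101$, and the final check that for each parity at least one of the two resulting $b$-values lies in $(-1,1)$ for every $\mathfrak g$. You in fact supply two details the paper leaves implicit — that $b=a$ would force $x=1$, and the precise disjoint forbidden ranges $\{8,\dots,14\}$ and $\{15,\dots,27\}$ behind the paper's ``simple exercise'' — both of which check out.
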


\section{Extremal Twins in the Sasaki Setting} \label{sastwins}
Let $(M,\cald, J)$ be a pseudo-convex CR structure of Sasaki type on a compact smooth manifold $M$ of dimension $2n+1$. \emph{Extremal Sasaki twins} of $(\cald,J)$ are two extremal Sasaki structures both compatible with $(\cald,J)$ and having commuting Sasaki-Reeb vector fields. In contrast to the K\"ahler setting, the twinning phenomenon  in the Sasaki setting gives rise to distinct extremal Sasaki metrics which, however, belong to the same underlying pseudo-convex CR structure, that is, one does not deform within the isotopy class.

Let $(M,\cald,J,\xi_0)$ and $(M,\cald,J,\xi_1)$ be such twins (here $\xi_i$ denotes the Sasaki-Reeb vector field, together with $(\cald,J)$, determines also the contact $1$-form $\eta_i$ (by $\cald=\ker \eta_i$, $\eta_i(\xi_i)=1$ and $\call_{\xi_i}\eta_i=0$) and the Sasaki metric $g_i$. Since the associated Sasaki-Reeb vector fields commute, they induce the action of a torus which is a subgroup of a maximal torus, say $T\subset (\mbox{Con}(\eta_0)\cap \mbox{Con}(\eta_1) )\cap \mbox{CR}(\cald,J)$ with Lie algebra $\gt$ and where $\mbox{Con}(\eta_i)$ is the strict contactomorphism group with respect to $\eta_i$ and
$\mbox{CR}(\cald, J)$ is the $CR$-transformation group.

By \cite{BGS06}, $(\cald,J,\eta_0)$ and $(\cald,J,\eta_1)$ are both $T$-invariant and we denote by
$$\mu : M\ra  \kt^*$$ 
the $\eta_0$-momentum map where, of course, $\gt^*$ is the dual of $\gt$. That is, $\mu(a)= \eta_0(\underline{a})$ where $\underline{a}$ is the vector field induced on $M$ from $a\in \kt$ via the action. Recall that $P:= \mu(M)$ is a compact polytope in the affine space $\Pi_{\xi_0}:=\{ x\in \kt^*\,|\, \langle \xi_0, x\rangle =1\}$ \cite{Ler02a}. From the work of \cite{ApCa18}, based on the Lee and Tanno formula \cite{lee86,tanno89}, we know that $(M,\cald,J,\eta_0)$ and $(M,\cald,J,\eta_1)$ are extremal Sasaki twins if and only if, denoting $f:=\eta_0(\xi_1)$,       
\begin{equation}\label{eqExtSasTwins}
\begin{split}
 &Scal(g_0)\;\; \mbox{ and }\\
 &f Scal(g_1)  = f^2 Scal(g_0) -2(n+1)f\Delta^{g_0} f -(n+1)(n+2) |df|^2_{g_0} 
\end{split}
\end{equation} are pull-backs, by $\mu$, of affine functions on $\Pi_{\xi_0}$. Indeed, $g_1$ is extremal if and only if $Scal(g_1)= \eta_1(Z_1^{ext})$ for the contact, Killing vector field $Z_1^{ext}$, so that $Z_1^{ext} \in \kt$. Now,  $\eta_1 = \eta_0(\xi_1)^{-1}\eta_0 = f^{-1}\eta_0$. Thus  $g_1$ is extremal if and only if $Scal(g_1)= \eta_1(Z_1^{ext})= f^{-1}\eta_0(Z_1^{ext})$, that is iff $f Scal(g_1)= \eta_0(Z_1^{ext}) = \langle\mu, Z_1^{ext}\rangle$.

In particular, noticing that $f$ is also the pull-back of an affine function on $\Pi_{\xi_0}$, if $(M,\cald,J,\eta_0)$ and $(M,\cald,J,\eta_1)$ are extremal Sasaki twins, then the function 
\begin{equation} \label{SasakiExtTwinsPOl} fScal(g_1)  - f^2 Scal(g_0)= -2(n+1)f\Delta^{g_0} f -(n+1)(n+2)|df|^2_{g_0}
\end{equation} 
(seen as a function on $P$) is the pull-back of a polynomial function of degree at most $3$ on $\Pi_{\xi_0}$. \\

{\it In what follows the Killing potential $f = \eta_0(\xi_1)$ is often identified with the affine function on $\kt^*$ whose pull-back by $\mu$ is $f$.} 
\bigskip

In the next two subsections, we will exploit the affine geometry and basic facts on Hamiltonian torus action to study the extremal Sasaki twins whenever one of the twins has constant scalar curvature. To this end, we introduce more notation and recall that the moment polytope $P= \mu(M)$ inherits a Duistermaat-Heckman measure from the pushforward ${\mbox{DH}} := \mu_* \eta_0\wedge (d\eta_0)^n/n!$ and we denote by $x_o=x_o^{\xi_o} \in P$ the barycenter of $P$ with respect to this measure. We get dual decompositions \begin{equation}\label{eqDualDecomp}
    \kt^*= \bbr x_o \oplus \xi_0^0\;\;\; \mbox{ and }\;\; \kt := \bbr \xi_0\oplus \check{\kt}
\end{equation} where $\check{\kt} := \{a\in \kt \,|\, \langle x_o, a\rangle =0 \} = \{a\in \kt \,|\, \int_M \eta_0(\underline{a}) \eta_0 \wedge (d\eta_0)^n =0 \}$. 

\subsection{Sasaki-extremal twins of Sasaki-Einstein metrics}\label{twinsofSE}

 We now assume furthermore that $(M, \cald,J,\eta_0,\xi_0, g_0)$ is a Sasaki-Einstein manifold. Up to an inconsequential
 transverse homothety (see (7.3.10) in \cite{BG05}), we may assume that $Scal(g_0)=2n$. From the Sasaki version of the Matsushima--Lichnerowicz theorem, the Sasaki-Einstein condition implies that Killing potentials are eigenfunctions of the Laplacian. In our setting this means that for each affine function $f\in \mbox{Aff}(\kt^*)$ 
  $$\Delta^{g_0} f =2 (f-f(x_o)).$$ 
 
 The condition that $(M, \cald,J,\eta_1,g_1)$ is a twin is equivalent to the condition that $fScal(g_1)$ is the pull-back of an affine-linear function on $P$ where $f := \eta_0(\xi_1)$. So we write $$fScal(g_1) = \mu^*\ell_1^{ext}$$ and $f(x)= \langle v,x \rangle +\lambda$, where $\lambda=f(x_o)$. Using that $(M, D,J,\eta_0,\xi_0, g_0)$ is a  Sasaki-Einstein manifold and the Lee--Tanno formula \eqref{eqExtSasTwins}, we then have, as functions on $P$
\begin{equation}\label{eqfscaltoricFano0}
\begin{split}
   \ell_1^{ext} &= f^2 Scal(g_0) -2(n+1)( f\Delta^{g_0}(f\circ \mu)-2(n+1)(n+2)  |df|^2_{g_0}\\ 
   &= 2n f^2-4 (n+1) f(f-\lambda) - 2(n+1)(n+2) |df|^2_{g_0}\\
   &= -2(n+2) f^2 +4 \lambda (n+1) f - 2(n+1)(n+2)|df|^2_{g_0}.
\end{split}
\end{equation} Observe that the last term of \eqref{eqfscaltoricFano0} vanishes on the critical points of $\mu$. Since the vertices of $P$ are images by $\mu$ of critical points of $\mu$ we see that equation~\eqref{eqfscaltoricFano0} implies that $$\ell_1^{ext}(p)= -2(n+2) f^2(p) +4 \lambda (n+1) f(p)   \;\;\; \forall p\in \mbox{vertices(P)}.$$
 
Let $k=\dim \Pi_{\xi_o}$, so that $k+1=\dim \kt$ and pick $k+1$ vertices say $p_0,\dots, p_k$ of $P$ forming a corner of $P$, i.e each segment between $p_i$ and $p_0$ is an edge of $P$ for $i=1,\dots k$.   This provides affine coordinates $\alpha \in \bbr^{k+1}$ for the hyperplane $\Pi_{\xi_o}$ so that each $p\in \Pi_{\xi_o}$ corresponds to $p= \sum_{i=0}^k\alpha_ip_i$ with $\sum_{i=0}^k\alpha_i=1$. In these coordinates an affine-linear function $f\in \mbox{Aff}(\Pi_{\xi_o},\bbr)$ corresponds to a $(k+1)$-tuple $f= (w_i)_{i=1,\dots, k}$ so that $$f(p)= \sum_{i=0}^k w_i\alpha_i.$$

\begin{lemma} \label{lem:CombinCdt} Let $(M, \cald,J,\eta_0,\xi_0, g_0)$ be a Sasaki-Einstein manifold with moment polytope $P \subset H$. In terms of the affine coordinates above, if an affine-linear function $f =(w_i)_{i=1,\dots, k} \in \mbox{Aff}(\Pi_{\xi_o},\bbr_{>0})$ corresponds to an extremal Sasaki twin of $(M, \cald,J,\eta_0,\xi_0, g_0)$ then for every vertex $p=(\alpha_i)_{i=0,\dots,k}$ of $P$ we have  
\begin{equation}\label{eqfscaltoricFano04}
 \sum_{i=0}^k \alpha_iw^2_i  = \left(\sum_{i=0}^k \alpha_iw_i \right)^2. 
\end{equation}      
\end{lemma}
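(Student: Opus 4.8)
The plan is to compare two descriptions of the affine-linear function $\ell_1^{ext}$ that must coincide at the vertices of $P$. The discussion preceding the lemma already records that, at every vertex $p$ of $P$,
\[
\ell_1^{ext}(p) = -2(n+2)f^2(p) + 4\lambda(n+1)f(p),
\]
the cubic gradient term $|df|^2_{g_0}$ having dropped out because the vertices are images under $\mu$ of critical points. This is the only analytic input I need; everything else is affine bookkeeping in the corner coordinates.

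First I would evaluate the displayed identity at the corner vertices $p_0,\dots,p_k$, where $f(p_i)=w_i$, obtaining $\ell_1^{ext}(p_i) = -2(n+2)w_i^2 + 4\lambda(n+1)w_i$. Since $\ell_1^{ext}$ is affine-linear on $\Pi_{\xi_o}$ and $p_0,\dots,p_k$ form a corner, hence a full-dimensional simplex whose barycentric coordinates $(\alpha_i)$ coordinatize $\Pi_{\xi_o}$, affine interpolation gives, for every $p=\sum_{i=0}^k\alpha_i p_i$,
\[
\ell_1^{ext}(p) = \sum_{i=0}^k \alpha_i\,\ell_1^{ext}(p_i) = -2(n+2)\sum_{i=0}^k \alpha_i w_i^2 + 4\lambda(n+1)\sum_{i=0}^k \alpha_i w_i .
\]
Affine-linearity of $f$ identifies the last sum with $f(p)=\sum_{i=0}^k \alpha_i w_i$.

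Finally I would take $p$ to be an arbitrary vertex of $P$, written in these coordinates as $(\alpha_i)_{i=0,\dots,k}$, and subtract the two expressions for $\ell_1^{ext}(p)$, namely the interpolation formula and the vertex formula from the first display. The terms $4\lambda(n+1)f(p)$ cancel identically, leaving
\[
-2(n+2)\sum_{i=0}^k \alpha_i w_i^2 = -2(n+2)\,f(p)^2 = -2(n+2)\Big(\sum_{i=0}^k \alpha_i w_i\Big)^2 ,
\]
and dividing by $-2(n+2)$ yields \eqref{eqfscaltoricFano04}.

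The step I expect to carry the real content — and the one to state carefully — is the juxtaposition of validities: the interpolation identity for $\ell_1^{ext}$ holds at every point of $\Pi_{\xi_o}$, whereas the quadratic relation holds only at vertices, which is exactly where $|df|^2_{g_0}$ vanishes. The lemma is precisely the assertion that a single affine-linear $\ell_1^{ext}$ cannot meet both demands unless, at each vertex of $P$, the quadratic $f^2$ agrees with the affine interpolation of the corner values $w_i^2$. No further geometry is needed; in particular one never uses the interior structure of $P$, only its vertices together with the chosen corner.
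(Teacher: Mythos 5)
Your proposal is correct and follows essentially the same argument as the paper: evaluate $\ell_1^{ext}$ at the corner vertices $p_0,\dots,p_k$, use affine-linearity to interpolate to an arbitrary vertex $p=(\alpha_i)$, and compare with the direct vertex formula $\ell_1^{ext}(p)=-2(n+2)f^2(p)+4\lambda(n+1)f(p)$ (valid there because $|df|^2_{g_0}$ vanishes at critical points of $\mu$), whereupon the linear terms cancel. No substantive difference from the paper's proof.
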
 
Note that for $p \in \{ p_0,\dots, p_k \}$, \eqref{eqfscaltoricFano04} is trivially satisfied. Moreover the equation $\sum_{i=0}^k \alpha_i=1$ implies that \eqref{eqfscaltoricFano04} is equivalent to 
\begin{equation}\label{eqfscaltoricFano05}
 \sum_{i=1}^k \alpha_i(w_i -w_0)^2 = \left(\sum_{i=1}^k \alpha_i(w_i-w_0) \right)^2. 
\end{equation}
\begin{proof} In these coordinates \eqref{eqfscaltoricFano0} reads 
\begin{equation}\label{eqfscaltoricFano0verticesBase}
   \ell_2^{ext} (p_i)=  -2(n+2) f^2(p_i) +4 \lambda (n+1) f(p_i) =  -2(n+2)w^2_i  +4 \lambda(n+1)w_i 
\end{equation} for $i=0,\dots, k$. For any other vertex $p\in P$ with affine coordinates $(\alpha_{i})_{i=0,\dots, k}$ we have 
\begin{equation}\label{eqfscaltoricFano02}
\ell_2^{ext} (p) = \sum_{i=0}^k \alpha_i ( -2(n+2)w^2_i  +4 \lambda(n+1)w_i ) 
\end{equation}
and, on the other hand, 
\begin{equation}\label{eqfscaltoricFano03}
\begin{split}
   \ell_2^{ext}(p)  &= -2(n+2) f^2(p) +4 \lambda (n+1) f(p) \\
   &= -2(n+2) (\sum_{i=0}^k\alpha_iw_i )^2 +4\lambda (n+1) \sum_{i=0}^k \alpha_iw_i. 
\end{split}
\end{equation} The last two equations coincide and the last terms cancel each other. Therefore, if $f =(w_i)_{i=0,\dots,k} \in \mbox{Aff}(P,\bbr_{>0})$ corresponds to a twin, then for each vertex $p=(\alpha_i)_{i=0,\dots,k}$ of $P$ equation \eqref{eqfscaltoricFano04} holds.
\end{proof}

The next two examples show that the condition obtained above is non-trivial in the toric case that is when the torus has maximal dimension (i.e $k =n = (\dim M-1)/2$). 

\begin{example}We check the condition \eqref{eqfscaltoricFano04} on the standard Sasaki--Einstein structure on the circle bundle over $\bbc\bbp^1\times \bbc\bbp^1$. The vertices of $P$ are $p_0=(-1,-1)$, $p_1=(-1,1)$, $p_2=(1,-1)$ and $p=(1,1)$, so that $\alpha_0 =-1$, $\alpha_1=1$ and $\alpha_2=1$. The solutions (twins) we seek correspond to solutions $(w_i)_{i=0,\dots, 2}\in \bbr_{>0}^3$ satisfying 
$$-w_0^2 +w_1^2 + w_2^2 = \left(-w_0 +w_1 +w_2\right)^2.$$ 
So $w_0 = w_1 = w_2=1$ is the solution corresponding to the original Sasaki--Einstein metric (i.e $f\equiv 1$) and one can check easily that 
$$-w_0^2 +w_1^2 + w_2^2 = \left(-w_0 +w_1 +w_2\right)^2 \;\Longleftrightarrow \; 0=(w_1-w_0)(w_2-w_0).$$ This implies that, up to a dilatation, extremal Sasaki twins of the Sasaki--Einstein metric on ${\mathbb S}^{3} \times {\mathbb S}^{3}$ can only correspond to $w_0 =  w_1 =1$ or $w_0 = w_2=1$. In fact, we will see later in Corollary \ref{SEonS3xS3hasnotwin} that this Sasaki--Einstein metric has no extremal Sasaki twins at all.
\end{example}

\begin{example}\label{exBl3cp2} We check the condition \eqref{eqfscaltoricFano04} on the standard Sasaki--Einstein structure on the circle bundle over $Bl_3(\bbc\bbp^2)$, the blow-up of $\bbc\bbp^2$ at the $3$ generic fixed points. The Delzant polytope of the anticanonical bundle of $Bl_3(\bbc\bbp^2)$ is the hexagon with vertices $p_0=(-1,0)$, $p_1=(0,-1)$, $p_2=(-1,1)$, $p_3=(0,1)$, $p_4=(1,0)$, $p_5=(1,-1)$. Taking $p_0,p_1,p_2$ as the base points of the barycentric coordinates we get that the coordinates of three other vertices are $p_3= (-2,1,2)$, $p_4=(-3,2,2)$ and $p_5=(-2,2,1)$.  The condition \eqref{eqfscaltoricFano04} on $p_3$ gives that the function $f=(w_0,w_1,w_2)$ must satisfy \begin{equation}\label{eqVerticesCdtP3}
    -2(w_2-w_0)^2 = 4(w_2-w_0)(w_1-w_0)
\end{equation} while on $p_4$ the condition \eqref{eqfscaltoricFano04} is \begin{equation}\label{eqVerticesCdtP4}
    -((w_1-w_0)^2 +(w_2-w_0)^2)= 4(w_1-w_0)(w_2-w_0).
\end{equation}  Inserting \eqref{eqVerticesCdtP3} into \eqref{eqVerticesCdtP4} gives that $$(w_1-w_0)^2 = (w_2-w_0)^2$$ which in turns gives $(w_1-w_0) = \pm (w_2-w_0)$. Putting this back in \eqref{eqVerticesCdtP3} implies that either     
    $$3(w_1-w_0)^2 = 9 (w_1-w_0)^2 \qquad \mbox{ or } \qquad 3(w_1-w_0)^2 = (w_1-w_0)^2$$ which are both impossible unless $w_0= w_1$ and thus $w_0= w_1=w_2$. In conclusion, the Sasaki--Einstein structure on the circle bundle over $Bl_3(\bbc\bbp^2)$ has no twin.    
\end{example}

\subsection{cscS twins}\label{ssCscS}
If both $Scal(g_0)$ and $Scal(g_1)$ are constant, say $C_0:=Scal(g_0)$ and $C_1:= Scal(g_1)$, then dividing \eqref{SasakiExtTwinsPOl} by $f$, gives \begin{equation} \label{SasakiCsCTwinsPOl} C_1  - f C_0 = -2(n+1)\Delta^{g_0} f -(n+1)(n+2)f^{-1}|df|^2_{g_0}.\end{equation} 

\begin{lemma} Let $(M,\cald,J)$ is a pseudoconvex compact connected manifold on which acts a compact torus $T$ and denote $\kt^+$ the Sasaki cone. \begin{itemize}
    \item[1)] If $(M,\cald,J,\xi_0)$ and $(M,\cald,J,\xi_1)$ are non-colinear cscS twins, then there exists no non-zero $\lambda \in \bbr$ such that $\lambda\xi_0+\xi_1\in \kt^+$ and $(M,\cald,J,\lambda\xi_0+\xi_1)$ is cscS. 
    \item[2)] Moreover, if $(M,\cald,J,\xi_1),\dots, (M,\cald,J,\xi_k)$ are cscS twins, then there is no cscS on $(M,\cald,J)$ in the cone spanned by $\xi_1,\dots, \xi_k$ in $\kt^+$ unless the $\xi_1,\dots,\xi_k$ are all colinear.  
    \end{itemize}
\end{lemma}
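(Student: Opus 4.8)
The plan is to treat each part by a short \emph{pointwise} computation, with no integration: I combine the relevant instances of the cscS identity \eqref{SasakiCsCTwinsPOl} so that the Laplacian terms disappear, and then read off rigidity by evaluating the resulting squared-gradient expression at a critical point of the moment map, where it vanishes. Throughout I use that a Killing potential $\eta(\zeta)$ of an element $\zeta\in\kt^+$ is everywhere positive.

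For part 1) I set $f:=\eta_0(\xi_1)$ and note that, since $\eta_0(\xi_0)=1$, the $\eta_0$-Killing potential of $\xi_\lambda:=\lambda\xi_0+\xi_1$ is $f_\lambda=\lambda+f$, so that $\Delta^{g_0}f_\lambda=\Delta^{g_0}f$ and $|df_\lambda|^2_{g_0}=|df|^2_{g_0}$. Writing \eqref{SasakiCsCTwinsPOl} for the cscS pair $(\xi_0,\xi_1)$ and for the pair $(\xi_0,\xi_\lambda)$ (legitimate, as $\xi_0$, $\xi_1$ and the hypothetical $\xi_\lambda$ are all cscS), the terms $-2(n+1)\Delta^{g_0}f$ coincide and cancel upon subtraction, leaving
\[
C_\lambda-C_1-\lambda C_0=\frac{(n+1)(n+2)\,\lambda\,|df|^2_{g_0}}{(\lambda+f)\,f},
\]
where $C_0,C_1,C_\lambda$ are the three constant scalar curvatures. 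The left-hand side is constant, so (using $\lambda\neq 0$) the function $|df|^2_{g_0}/\big((\lambda+f)f\big)$ equals a constant $\kappa$. Evaluating at a point where $f$ attains its maximum, where $df=0$ while $f>0$ and $\lambda+f>0$ because $\xi_1,\xi_\lambda\in\kt^+$, forces $\kappa=0$; hence $|df|^2_{g_0}\equiv 0$, so $f$ is constant and $\xi_1\in\bbr\xi_0$, contradicting non-colinearity.

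For part 2) I take the putative cscS element $\xi^*=\sum_{i=1}^k t_i\xi_i$ (with $t_i\ge 0$) as reference, so $Scal(g^*)=C^*$ is constant, each $f_i:=\eta^*(\xi_i)>0$, and $\sum_i t_if_i=\eta^*(\xi^*)=1$. Applying \eqref{SasakiCsCTwinsPOl} to each cscS pair $(\xi^*,\xi_i)$ and forming $\sum_i t_i(\cdots)$, the identity $\sum_i t_if_i=1$ makes $\sum_i t_i\Delta^{g^*}f_i=\Delta^{g^*}(1)=0$ and collapses the potential terms, giving
\[
\sum_{i=1}^k t_iC_i-C^*=-(n+1)(n+2)\sum_{i=1}^k t_i\,f_i^{-1}|df_i|^2_{g^*}.
\]
Thus $\sum_i t_if_i^{-1}|df_i|^2_{g^*}$ is a nonnegative constant. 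Evaluating at a point $p$ lying over a vertex of the moment polytope of the sub-torus generated by $\xi_1,\dots,\xi_k$, where every Killing potential is critical (so $df_i(p)=0$ for all $i$, exactly as used before Lemma~\ref{lem:CombinCdt}), forces that constant to be $0$; since the summands are nonnegative, $t_i|df_i|^2_{g^*}\equiv 0$ for each $i$. For every $i$ with $t_i>0$ this yields $f_i$ constant, i.e. $\xi_i\in\bbr\xi^*$. In particular, if $\xi^*$ is interior to the cone (all $t_i>0$) then all $\xi_i$ are colinear with $\xi^*$, hence mutually colinear, which is the stated exceptional case.

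The contact-form bookkeeping and the cancellation of the Laplacian terms are immediate, so the only step demanding care — and the main obstacle — is the existence, in part 2), of a single point at which \emph{all} the potentials $f_i$ are simultaneously critical. I would supply this by passing to the moment map $\mu'$ of the sub-torus generated by $\xi_1,\dots,\xi_k$: its image is a compact convex polytope, and over any vertex every component $\langle\mu',a\rangle$, in particular each $f_i=\langle\mu',\xi_i\rangle$, has vanishing differential — the transverse-symplectic fixed-point phenomenon already invoked in the Sasaki--Einstein discussion. Everything else is pointwise algebra, with no integration or estimate required.
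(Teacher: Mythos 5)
Your proof is correct and follows essentially the same route as the paper's: the same quotient form \eqref{SasakiCsCTwinsPOl} of the twin equation, the same weighted-sum cancellation of the Laplacian terms using $\sum_i t_i f_i = 1$, and the same conclusion that the resulting constant vanishes because the gradient terms vanish at points lying over vertices of the moment polytope (the paper's ``maximally fixed point set''). Your only departures are cosmetic: you treat the two claims in the opposite order and, in part 1), evaluate at a maximum of $f$ rather than at a torus-critical point.
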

\begin{proof} We first prove the second claim. Assume that the combination $\xi= \sum_{i=1}^k\lambda_i \xi_i$, where $(\lambda_1,\dots,\lambda_k)\in \bbr^k_{>0}$, defines a cscS structure on $(\cald,J)$ and denote by $\eta$ the contact $1$-form and by $g$ the Sasaki metric associated to $(\cald,J,\xi)$. The hypothesis is that each $\xi_i$ defines a cscS structure on $(\cald,J)$ and denote $f_i:= \eta(\xi_i)$ the associated Killing potential and $C_i \in \bbr$ the constant transversal scalar curvature of $(\cald,J,\xi_i)$. Since $\eta(\xi)=1$ we have $$1= \sum_{i=1}^k\lambda_i f_i$$ and, moreover, each function $f_i$ satisfies the equation
\begin{equation} \label{SasakiCsCTwinsPOlicone} C_i  -Cf_i =  -2(n+1)\Delta^{g} f_i -(n+1)(n+2)(f_i)^{-1}|df_i|^2_{g}\end{equation} for $i=1,\dots, k$. Taking the weighted sum over $\lambda_i$, we have 
\begin{equation} \label{SasakiCsCTwinsPOlicone2} \sum_{i=1}^k\lambda_iC_i -C  = +0 -(n+1)(n+2)\sum_{i=1}^k\frac{\lambda_i}{f_i}|df_i|^2_{g}\end{equation} for $i=1,\dots, k$. Since the left hand side of \eqref{SasakiCsCTwinsPOlicone2} is constant, the right hand side is constant as well. But the right hand side vanishes on the maximally fixed point set of the torus action (which is non-empty since $M$ is compact) and then $$\sum_{i=1}^k\frac{\lambda_i}{f_i}|df_i|^2_{g}\equiv 0.$$ This implies that $df_i\equiv 0$, that is $f_i$ is constant, for $i=1,\dots, k$, since $(\lambda_1,\dots,\lambda_k)\in \bbr^k_{>0}$. This proves the second claim. 

For the first claim, we get two equations 
$$C_1 -Cf =  -2(n+1)\Delta^{g} f-(n+1)(n+2)(f)^{-1}|df|^2_{g}$$ 
and
\begin{equation} \label{SasakiCsCTwinsPOlambda} C_\lambda  -C(f+\lambda) =  -2(n+1)\Delta^{g} f-(n+1)(n+2)(f+\lambda)^{-1}|df|^2_{g}.\end{equation} So $C_\lambda -C_1 -C\lambda = (n+1)(n+2)\frac{\lambda}{f(f+\lambda)}|df|^2_{g}$ which holds if and only if $\lambda=0$ or $df\equiv 0$. \end{proof}

\begin{remark} Another way to understand the first claim of the last Lemma goes as follows: given a cscS $(M,\cald,J,\xi_0)$ and $\xi_1 \in\kt^+$ not colinear to $\xi_0$, there is at most one $\lambda\in \bbr$ such that $(M,\cald,J,\lambda\xi_0+\xi_1)$ is likely to be cscS. This candidate is the unique critical point of the Einstein-Hilbert functional ${\bf EH}: \kt^+ \ra \bbr$ along the path $\lambda \mapsto \lambda\xi_0+\xi_1$ thanks to \cite[Lemma 3.3]{BHLT15}. \end{remark}

\begin{corollary} Let $(M,\cald,J)$ is a pseudoconvex compact connected manifold on which acts a compact torus $T$ and denote $\kt^+$ the Sasaki cone. Denote by $\mathrm{cscS}(\kt^+) \subset \kt^+$ the set of rays of cscS structure (i.e for each $\xi \in \mathrm{cscS}(\kt^+)$, $(M,\cald,J,\xi)$ is cscS). Then, 
\begin{itemize}
    \item[1)] $\mathrm{cscS}(\kt^+)/\bbr_{>0} \subset \kt^+/\bbr_{>0}$ meets every line in $\kt^+/\bbr_{>0}$ in at most two points;    
    \item[2)] $\mathrm{cscS}(\kt^+)$ lies in the boundary of its convex hull. 
\end{itemize}
\end{corollary}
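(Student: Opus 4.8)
The plan is to prove both parts by passing to a cross-section of the Sasaki cone, where rays become points and the two parts of the preceding Lemma translate into statements of elementary convex geometry. I would fix a linear functional $\ell$ on $\kt$ that is positive on $\kt^+\setminus\{0\}$ (such $\ell$ exists since $\kt^+$ is a pointed convex cone) and set $\Pi=\{a\in\kt\,|\,\ell(a)=1\}$. Then $\kt^+/\bbr_{>0}$ is identified with the convex set $\hat{\kt}^+:=\kt^+\cap\Pi$: rays of $\kt^+$ correspond to points of $\hat{\kt}^+$, lines in $\kt^+/\bbr_{>0}$ to affine lines of $\Pi$ (met with $\hat{\kt}^+$), and $\mathrm{cscS}(\kt^+)/\bbr_{>0}$ to a subset $\Sigma\subset\hat{\kt}^+$. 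The dictionary I would rely on is that two \emph{distinct} points of $\Sigma$ represent non-colinear Reeb fields, and as elements of the abelian Lie algebra $\kt$ they automatically commute; thus any finite family of distinct points of $\Sigma$ is a family of cscS twins in the sense of the Lemma. I may assume $\dim\kt\geq2$, since otherwise $\kt^+$ is a single ray and both claims are vacuous.

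For 1) I would argue by contradiction: suppose some affine line in $\Pi$ contained three distinct points $q_1,q_2,q_3\in\Sigma$ with $q_2$ between $q_1$ and $q_3$, so that $q_2=(1-\theta)q_1+\theta q_3$ for some $\theta\in(0,1)$, an identity of vectors in $\kt$. Setting $\xi_0:=q_1$, $\xi_1:=q_3$ and $\lambda:=(1-\theta)/\theta>0$, this identity rearranges to $\lambda\xi_0+\xi_1=\theta^{-1}q_2$, which spans the same ray as $q_2$ and is therefore in $\kt^+$ and cscS. As $\xi_0,\xi_1$ are non-colinear cscS twins, this contradicts part 1) of the Lemma, so $\Sigma$ meets every line in at most two points.

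For 2) I would take the topological boundary inside $\Pi$. If $\mathrm{conv}(\Sigma)$ is not full-dimensional in $\Pi$ it has empty interior, and then $\Sigma\subset\mathrm{conv}(\Sigma)\subset\partial\,\mathrm{conv}(\Sigma)$ with nothing to prove; this also absorbs the cases $|\Sigma|\leq1$. Otherwise I would assume for contradiction that some $q_0\in\Sigma$ lies in the interior of $\mathrm{conv}(\Sigma)$. Using the standard fact that every interior point of a convex hull lies in the interior of a top-dimensional simplex with vertices in the set, I would write $q_0=\sum_{i=0}^m t_iq_i$ with $m=\dim\Pi\geq1$, all $t_i>0$, $\sum_i t_i=1$, and $q_0,\dots,q_m\in\Sigma$ affinely independent. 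Affine independence makes the $q_i$ pairwise distinct, hence non-colinear cscS twins, and the identity — read in $\kt$ — displays the cscS ray $q_0$ as a strictly positive combination of them, contradicting part 2) of the Lemma. Thus no point of $\Sigma$ is interior to $\mathrm{conv}(\Sigma)$, which is the second claim.

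The entire substantive content is supplied by the preceding Lemma; the only delicate point, and the step I expect to cost the most care, is the convex-geometric bookkeeping in 2). There one must express ``interior of the convex hull'' as a combination with \emph{all} weights strictly positive whose generating rays are genuinely non-colinear, so that the exceptional colinear alternative in the Lemma is ruled out, and one must separately dispatch the degenerate case where $\mathrm{conv}(\Sigma)$ fails to be full-dimensional. With the cross-section dictionary in place, both assertions then follow immediately.
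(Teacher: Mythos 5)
Your argument is correct and takes essentially the same route as the paper: the paper states this corollary with no written proof, treating it as immediate from the preceding Lemma, and your cross-section dictionary plus the two applications of the Lemma (betweenness for part 1, interior of a top-dimensional simplex for part 2) is precisely the deduction being left implicit. The only blemish is notational: in part 2) the interior point $q_0$ cannot literally be one of the affinely independent simplex vertices $q_0,\dots,q_m$ (a vertex is never interior to its simplex), so the vertices should carry different names, but this does not affect the validity of the argument.
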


By \cite{BHLT21}, the set  $\mathrm{cscS}(\kt^+)/\bbr_{>0} \subset \kt^+/\bbr_{>0}$ is algebraic, so the last observation gives that its degree is at most two. 

\begin{corollary} \label{corQuadraticVar} The set $\mathrm{cscS}(\kt^+)/\bbr_{>0}$ is either empty or lies in a quadratic subvariety of $\kt^+/\bbr_{>0}$. 
\end{corollary}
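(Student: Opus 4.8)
The plan is to read Corollary~\ref{corQuadraticVar} as a purely (real) algebraic statement and deduce it from the facts assembled immediately above it. Write $V := \mathrm{cscS}(\kt^+)/\bbr_{>0}$, viewed inside the real projective space $\bbp(\kt)$. If $V=\emptyset$ there is nothing to prove, so assume $V\neq\emptyset$. By \cite{BHLT21} the set $V$ is (real) algebraic, and by part 1) of the preceding Corollary every projective line meets $V$ in at most two points; as recorded there, this forces the degree of $V$ to be at most two. Note that part 1) also shows $V$ contains no line, so $V$ is a \emph{proper} algebraic subset and the degenerate case $V=\kt^+/\bbr_{>0}$ cannot occur. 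The goal is then to show that any such set sits inside a degree $\le 2$ hypersurface. My route is: complexify, invoke the classification of low-degree varieties to produce a complex quadric through $V$, and then descend that quadric to the reals.

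First I would pass to the complex Zariski closure $W:=\overline{V}\subset\bbp(\kt_{\bbc})$, a complex projective set of degree $\le 2$ which, since $V$ is real, is defined over $\bbr$. I would then argue componentwise (there are at most two components, since the total degree is $\le 2$). A component of degree $1$ is a linear subspace $L\subsetneq\bbp(\kt_{\bbc})$, cut out by a linear form $\ell$ and hence by the quadratic form $\ell^2$. For a component of degree $2$, let $k$ be its dimension and $\bbp^M$ its linear span; nondegeneracy in $\bbp^M$ together with the minimal-degree bound $\deg\ge M-k+1$ gives $M-k\le 1$, and the possibility $M=k$ (a linear space, degree $1$) is excluded, so $M-k=1$ and the component is a genuine quadric hypersurface in $\bbp^M$, extended to a quadric on $\bbp(\kt_{\bbc})$. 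Taking the product $\ell_1\ell_2$ in the two-linear-component case, or the single quadric in the irreducible degree-$2$ case, produces a nonzero quadratic form $q$ vanishing on all of $W$.

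It then remains to make the quadric real. The space of complex quadratic forms vanishing on $W$ is a linear subspace $\mathcal Q_{\bbc}\subset\mathrm{Sym}^2(\kt_{\bbc}^*)$; because $W$ is defined over $\bbr$ this subspace is conjugation-stable, hence the complexification $\mathcal Q_{\bbr}\otimes\bbc$ of the real space $\mathcal Q_{\bbr}$ of real quadratic forms vanishing on $V$. The previous step gives $\mathcal Q_{\bbc}\neq 0$, so $\mathcal Q_{\bbr}\neq 0$, and there is a nonzero real quadratic form $q_{\bbr}$ with $V\subset\{q_{\bbr}=0\}$. Intersecting the real quadric $\{q_{\bbr}=0\}$ with the open cone $\kt^+/\bbr_{>0}$ exhibits $V$ inside a quadratic subvariety, as asserted.

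The main obstacle I anticipate is the bridge between the real and complex pictures: part 1) of the preceding Corollary only controls intersections with \emph{real} lines and counts \emph{real} points, whereas the classification of low-degree varieties is governed by the degree of the complex Zariski closure. Reconciling these (so that ``at most two real points on every real line'' legitimately yields complex degree $\le 2$) is exactly what is packaged into the algebraicity statement of \cite{BHLT21}, which I take as given. The only remaining delicate point is the descent of the last paragraph, ensuring the containing quadric has real coefficients; this is handled by the elementary fact that a nonzero complex solution space defined over $\bbr$ has a nonzero real point. The componentwise bookkeeping (degenerate linear spans, reducible configurations) is routine once the minimal-degree inequality is in hand.
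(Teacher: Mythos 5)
Your proposal follows the paper's own (one\nobreakdash-sentence) argument: the paper likewise obtains the corollary by combining the algebraicity of $\mathrm{cscS}(\kt^+)/\bbr_{>0}$ from \cite{BHLT21} with part 1) of the preceding corollary to conclude that the degree is at most two, hence containment in a quadratic subvariety. The extra bookkeeping you supply --- decomposing the complex Zariski closure, invoking the minimal-degree inequality, and descending the containing quadric to $\bbr$ --- is correct and merely fills in details the paper leaves implicit, including the real-to-complex degree bridge that you, like the paper, delegate to \cite{BHLT21}.
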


\subsection{Toric extremal Sasaki twins}
We now assume that $(M,\cald,J,\eta_0)$ and $(M,\cald,J,\eta_1)$ are toric meaning that the maximal torus $$T\subset (\mbox{Con}(\eta_0)\cap \mbox{Con}(\eta_1) )\cap \mbox{CR}(\cald,J)$$ has dimension $n+1$ where $\dim M= 2n+1$.

In that case, the Delzant-Lerman-Tolman correspondence, adapted to the contact setting \cite{BG00b, Ler02a}, states that the contact toric strict manifold $(M,\cald,\eta_0)$ is completely determined by the moment polytope $$P:=\mu(M)$$ and a {\it labelling}, that is a set of affine-linear functions $\ell=\{\ell_1,\dots, \ell_d\} \subset \mbox{Aff}(\Pi_{\xi_0}, \bbr)$ which itself corresponds to the weights of the action on $M$. Recall that $F_i=\ell_i^{-1}(0)\cap P$ are the facets of the polytope $P$ and that 
$$P= \{x\in \Pi_{\xi_0}\,|\, \ell(x) \geq 0\}$$ 
where we denote $u_i := d\ell_i$ the (inward) normal vector to the facet $F_i$.

Any $T$--invariant tensor or operator on $(M,\cald,J,\eta_0)$ can be suitably translated in terms of the data $(P,\ell, \Lambda)$ where $\Lambda$ is the lattice of weights of $T$ in $\kt^*$, this is the essence of the (Sasaki version) of the Abreu–Guillemin theory (see eg \cite{Apostolov19} or \cite{Abr09,Leg10} for the Sasaki version). Therefore, the CR-structure, equivalently the Sasaki metric $g_0$, corresponds to a {\it symplectic potential} $ \varphi  \in C^\infty(\mathring{P})\cap C^0(P)$ fulfilling the following conditions 
\begin{itemize}
    \item $\varphi$ is strictly convex on the relative interior of the polytope $\mathring{P}$, the inverse of its Hessian $${\bf H} := (H_{i,j})_{i,j=1, \dots n} = \left(\frac{\partial^2 \varphi }{\partial x_i\partial x_j} \right)^{-1}$$ extends smoothly on $P$ and satisfies 
    \item ${\bf H} (u_k,\cdot) = 0$ on $F_k$;
    \item $ d {\bf H} (u_k,u_k) = 2u_k$ on $F_k$.
\end{itemize} Any $T$--invariant function $h$ on $M$ is the pull-back of a function on $P$ and to lighten the notation we will directly denote $h(x)$ the value taken by $h$ on $\mu^{-1}(x)$ where $x =(x_1,\dots, x_n)$ denotes affine coordinates on $\Pi_{\xi_0}$.   

The transverse scalar curvature of $(M,\cald,J,\eta_0)$ is $T$--invariant and is 
\begin{equation}\label{eqScaltoric}
    Scal(g_0) = -\frac{\partial^2 H_{ij}}{\partial x_i\partial x_j} 
\end{equation} and, $(M,\cald,J,\eta_0)$ is extremal if and only if \eqref{eqScaltoric} is affine-linear. In that case, this affine-linear function, say $\ell^{ext}$ is completely determined by the data $(P,\ell)$. The basic Laplacian on $T$-invariant functions has the following form 
\begin{equation}\label{eqLaptoric}
    \Delta^{g_0}_B = - \frac{\partial }{\partial x_i} \left( H_{ij}\frac{\partial }{\partial x_j}\right) 
\end{equation} while the square norm of the $g_0$-gradient of a $T$-invariant function $h$ is 
\begin{equation}\label{eqNormgradtoric}
    |dh|^2_{g_0} =  \sum_{i,j=1}^n H_{ij} \frac{\partial h }{\partial x_i} \frac{\partial h}{\partial x_j}. 
\end{equation} Now the fact that $f$ is $g_0$-Killing and positive corresponds to the fact that $f \in\mbox{Aff}(\Pi_{\xi_0}, \bbr_{>0})$ and we write it as   
 $$f(x) = \langle v, x \rangle +\lambda = \sum_{i=1}^nv_i x_i +\lambda.$$      

Applying formula \eqref{eqExtSasTwins}, like in \cite{ApCa18}, and the formulas above, we have that $(M,\cald,J,\eta_0)$ and $(M,\cald,J,\eta_1)$ are both extremal Sasaki if and only if $Scal(g_0)=\ell^{ext}(x)$ and the following expression  
\begin{equation}
\begin{split}
    &f(x)^2 \ell^{ext}(x)  -2(n+1)f(x)\Delta^{g_0} f -(n+1)(n+2) |df|^2_{g_0}=\\
    &\quad\quad =f(x)^2 \ell^{ext}(x) + 2(n+1)f(x) \sum_{i,j=1}^n  H_{ij,i}v_j   - \sum_{i,j=1}^n (n+1)(n+2) H_{ij}v_iv_j   
\end{split}
\end{equation} is affine-linear. In particular, in that case, 
\begin{equation}\label{eqExttoric0} 2(n+1)f(x) \sum_{i,j=1}^n  H_{ij,i}v_j   - \sum_{i,j=1}^n (n+1)(n+2) H_{ij}v_iv_j\end{equation} is a polynomial of degree $3$ in $x$. Note that following \eqref{SasakiExtTwinsPOl}, we also know that \eqref{eqExttoric0} should be divisible by $f$.

\subsection{The toric odd-dimensional sphere}\label{ssSIMPLEX}
Here we consider the case of the odd-dimensional sphere $(M={\mathbb S}^{2n+1}, \cald, J,\eta_0)$ with its standard CR flat, toric, Sasaki-Einstein structure and whose K\"ahler quotient is ${\mathbb C}{\mathbb P}^n$ with the Fubini-Study structure. From Bryant \cite{Bry01} and Apostolov--Calderbank--Gauduchon \cite{ApCaGa06}, we know that the extremal K\"ahler metrics on the weighted projectives spaces are all Bochner flat and K\"ahler quotients of the CR flat Sasaki sphere. It implies, in our language, that the Sasaki cone of the CR flat sphere is exhausted by (twinning) extremal Sasaki metrics.  \\

In this subsection, we retrieve this result using the toric set-up and basic calculus. In that case the polytope $P$ is a $n$-simplex and we can assume that $\ell_1(x) = 1+x_1$, $\dots$, $\ell_n(x) = 1+x_n$ and $\ell_0(x) = 1-x_1-\dots-x_n$. The symplectic potential is known, from \cite{Abr01} to be $$\varphi(x) = \frac{1}{2}\sum_{k=0}^n \ell_k(x) \log \ell_k(x)$$ and, thus, its Hessian is 
$${\bf G} = \frac{1}{2\ell_0} \begin{pmatrix}
    \frac{\ell_0}{\ell_1} +1 & 1 &1 &\dots &1\\
    1&\frac{\ell_0}{\ell_1} +1 & 1& \dots &1\\
    1&1 & \dots & \dots &1\\
    && \vdots&&\\
     1&&&1& \frac{\ell_0}{\ell_n} +1 \\
\end{pmatrix}$$ with inverse 

$${\bf H} = \begin{pmatrix}
    2\ell_1 -\frac{2\ell_1^2}{n+1} & -\frac{2\ell_1\ell_2}{n+1} &  -\frac{2\ell_1\ell_3}{n+1} &\dots & -\frac{2\ell_1\ell_n}{n+1} \\
     -\frac{2\ell_1\ell_2}{n+1} &2\ell_2 -\frac{2\ell_2^2}{n+1} &  -\frac{2\ell_3\ell_2}{n+1}& \dots & -\frac{2\ell_n\ell_2}{n+1}\\
    \vdots&& \vdots&&\vdots\\
    - \frac{2\ell_1\ell_n}{n+1} &\dots && -\frac{2\ell_{n-1}\ell_n}{n+1}& 2\ell_n -\frac{2\ell_n^2}{n+1}\\
\end{pmatrix}.$$ 

We can check the well-known fact that, for K\"ahler--Einstein structures (with Einstein constant\footnote{For the Sasaki-Einstein case, 
the transverse scalar curvature is $4n(n+1)$ and hence the Einstein constant 
of the transverse K\"ahler structure is $2(n+1)$.
However, since everything works fine up to scale (homothety deformation of the Sasaki structure) we use (as in \S \ref{twinsofSE}) the Einstein constant $1$.} $1$) the momentum map coordinates are eigenfunctions of the Laplacian and we find, for all $j=1,\dots, n$,
\begin{equation}\label{eqLaptoricCPn}
    \Delta^{g_0}_B x_j = - \sum_{i=1}^n H_{ij,i} = 2 (x_j-p_i)
\end{equation} where $p_i \in\bbr$ is a constant, the $i$-th coordinate of the barycenter of the moment polytope, so that $Scal(g_0) = 2n$.  

Replacing the values of $H_{ij}= 2 \delta_{ij}\ell_i -2\frac{\ell_i\ell_j}{n+1}$ and \eqref{eqLaptoricCPn} in \eqref{eqExttoric0} we get 
$$-4 (n+1)f(x) \sum_{j=1}^n  (x_j-p_j) v_j   - 2(n+1)(n+2) \sum_{i,j=1}^n \left(2 \delta_{ij}\ell_i -2\frac{\ell_i\ell_j}{n+1}\right))v_iv_j,$$ which is a polynomial of degree $2$ in $x$ as soon as $f \in \mbox{Aff}(\bbr^n,\bbr)$. When $f \in \mbox{Aff}(\bbr^n,\bbr_{>0})$, it defines a CR-twist $(M={\mathbb S}^{2n+1}, \cald, J,\eta_1)$ which is extremal if and only if    
\begin{equation}\label{eqfscaltoric}
\begin{split}
    fScal(g_0) = 2nf^2&(x) -4(n+1) f(x) \sum_{j=1}^n  (x_j-p_j) v_j  \\
    &- 2(n+1)(n+2) \sum_{i,j=1}^n \left(2 \delta_{ij}\ell_i(x) -2\frac{\ell_i(x)\ell_j(x)}{n+1}\right))v_iv_j
\end{split}
\end{equation} is affine-linear in $x$. Denoting $m = \sum_{i=1}^n v_i$ the right hand side of \eqref{eqfscaltoric} is
\begin{equation}\label{eqfscaltoric2}
\begin{split}
    fScal(g_1) &= 2nf^2(x) -4(n+1) f(x)( f(x) -\lambda)\\
    &\qquad  +2(n+2) (f^2(x) + 2(m-\lambda) f(x) +(m-\lambda)^2) \\
    & \qquad\qquad\qquad- 2(n+1)(n+2) \sum_{i=1}^n v_i^2(1+x_i).
\end{split}
\end{equation} Note that in the last equation, the coefficient of $f^2$ sum to zero and therefore \eqref{eqfscaltoric2} is linear in $x$. This gives a new proof of the following result which is an immediate consequence of \cite{Bry01,ApCaGa06}.

\begin{proposition} The Sasaki cone of the CR flat sphere is exhausted by extremal Sasaki metrics (that are all twins).    
\end{proposition}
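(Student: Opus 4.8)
The plan is to apply the general toric extremality criterion directly: by \eqref{eqExtSasTwins}, the CR-twist $\eta_f := f^{-1}\eta_0$ is the contact form of an extremal Sasaki structure exactly when $f\,Scal(g_f) = f^2 Scal(g_0) - 2(n+1) f\Delta^{g_0} f - (n+1)(n+2)|df|^2_{g_0}$ is the pullback of an affine-linear function on $\Pi_{\xi_0}$. Every ray of the Sasaki cone corresponds to a positive $f \in \mbox{Aff}(\Pi_{\xi_0}, \bbr_{>0})$, so to exhaust the cone by extremal metrics it suffices to establish this affine-linearity for \emph{every} such $f = \langle v, x\rangle + \lambda$.

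First I would substitute the explicit simplex data into the three summands: the inverse Hessian $H_{ij} = 2\delta_{ij}\ell_i - \tfrac{2}{n+1}\ell_i\ell_j$, together with the eigenfunction relation \eqref{eqLaptoricCPn}, $\Delta^{g_0} x_j = 2(x_j - p_j)$ with barycenter $p_j = 0$, which simultaneously certifies $Scal(g_0) = 2n$, i.e. the Sasaki-Einstein normalisation. Each summand is then a polynomial of degree at most two in $x$. The only manipulation that is not immediate is the norm term: one writes $|df|^2_{g_0} = \sum_{i,j} H_{ij} v_i v_j = 2\sum_i (1+x_i) v_i^2 - \tfrac{2}{n+1}\big(\sum_i \ell_i v_i\big)^2$ and observes that $\sum_i \ell_i v_i = f + (m - \lambda)$, where $m := \sum_i v_i$, so that the quadratic part of $|df|^2_{g_0}$ is the single perfect square $-\tfrac{2}{n+1}(f + (m-\lambda))^2$. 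Assembling the three pieces yields the form \eqref{eqfscaltoric2}.

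The decisive step is then a one-line coefficient count. The quadratic (in $x$) contributions are all carried by $f^2$, with coefficients $+2n$ from $f^2 Scal(g_0)$, $-4(n+1)$ from $-2(n+1)f\Delta^{g_0}f = -4(n+1)f(f-\lambda)$, and $+2(n+2)$ from the perfect square in $-(n+1)(n+2)|df|^2_{g_0}$. Since $2n - 4(n+1) + 2(n+2) = 0$, all degree-two terms cancel and $f\,Scal(g_f)$ reduces to an affine-linear function of $x$. Hence $\eta_f$ is extremal for every positive affine $f$, the Sasaki cone is exhausted by extremal Sasaki metrics, and—because any two non-colinear Reeb fields $\xi_{f_1},\xi_{f_2}$ of this cone commute—each such pair is an extremal Sasaki twin in the sense of Definition \ref{twinsSasaki}.

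The main obstacle is purely organisational: one must exploit the product structure $\ell_i\ell_j$ in $H_{ij}$ to isolate exactly one perfect-square $f^2$-term inside $|df|^2_{g_0}$, since it is precisely that term which conspires with the contributions from $f^2 Scal(g_0)$ and $f\Delta^{g_0} f$ to force the cancellation. No genuinely analytic difficulty arises, as the eigenfunction property of the momentum coordinates, the value $Scal(g_0)=2n$, and the vanishing barycenter are standard features of the Fubini-Study quotient already recorded above.
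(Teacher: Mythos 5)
Your proposal is correct and follows essentially the same route as the paper's own argument in \S\ref{ssSIMPLEX}: substitute the simplex data $H_{ij}=2\delta_{ij}\ell_i-\tfrac{2}{n+1}\ell_i\ell_j$ and the eigenfunction relation for the momentum coordinates into \eqref{eqExtSasTwins}, isolate the perfect square $\bigl(\sum_i\ell_iv_i\bigr)^2=(f+(m-\lambda))^2$ in $|df|^2_{g_0}$, and observe the cancellation $2n-4(n+1)+2(n+2)=0$ of the $f^2$-coefficients, which is exactly the paper's equation \eqref{eqfscaltoric2}.
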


\section{Twins of Toric extremal K\"ahler metrics on quadrilaterals }\label{quadrilaterals}
In this section we study $4$-weighted extremal K\"ahler twins by looking for pairs of the form $(g,1)$ and $(g,f)$. In other words, our viewpoint will be to start with an actual extremal K\"ahler metric $g$ and then look for a non-constant Killing potential $f>0$ such that $(g,1)$ and $(g,f)$ are
$4$-weighted extremal K\"ahler twins.

We will assume that $g$ is extremal K\"ahler and toric, i.e., part of a full toric Delzant structure $(\omega,J,g,T,\mu)$ with a quadrilateral Delzant polytope. From \cite{Leg09} we know that if $g$ is csc then $g$ is either Calabi toric, orthotoric, or a product metric. For this reason, and given our particular interest in csc structures, we will focus this section on those three subcases of toric K\"ahler structures on quadrilaterals. To study extremal toric K\"ahler structures on quadrilaterals in general, one would need to consider all ambitoric structures \cite{ApCaGa15}. We will leave that endeavor to the future or interested readers.

\subsection{Calabi Toric}

Any trapezoid is equivalent (by an affine transform) to a {\it Calabi trapezoid} where

\begin{definition} \label{defnCalabiPolyt}
A \emph{Calabi trapezoid} is a polytope in $\bbr^2$ which is the image of a rectangle $[\alpha_1,\alpha_2]\times[\beta_1,\beta_2]\subset\bbr^2$, with $\alpha_1>0$ and $\beta_1\geq0$, by the map $\sigma : (x,y) \mapsto (x,xy)$. \end{definition}

Let $\Delta$ be a Calabi trapezoid with parameters $\alpha_1,\alpha_2,\beta_1,\beta_2$, with $\alpha_1>0$ and $\beta_1\geq 0$. The normals of $\Delta$ can be written as:
\begin{align} \label{labelCalabi}u_{\alpha_1}=C_{\alpha_1}\begin{pmatrix}
\alpha_1\\
0\end{pmatrix}, \; u_{\alpha_2}=C_{\alpha_2}\begin{pmatrix}
\alpha_2\\
0\end{pmatrix}, \;u_{\beta_1}=C_{\beta_1}\begin{pmatrix}
\beta_1\\
-1\end{pmatrix},\; u_{\beta_2}=C_{\beta_2}\begin{pmatrix}
\beta_2\\
-1\end{pmatrix}\end{align}
with $C_{\alpha_1}$, $C_{\beta_2}>0$ and $C_{\alpha_2}$, $C_{\beta_1}<0$. Thus, any labeled Calabi trapezoid determines and is determined by an $8$--tuple $(\alpha_1,\alpha_2,\beta_1,\beta_2,C_{\alpha_1}, C_{\alpha_2}, C_{\beta_1},C_{\beta_2})$ which we shall refer to as \emph{Calabi parameters}. We shall also refer to the subset $(\alpha_1,\alpha_2,\beta_1,\beta_2)$ as \emph{Calabi coordinates}.

\begin{definition} \label{defnCALABItoric} $(\omega,J,g, T,\mu)$ is a \emph{Calabi toric structure} with \emph{Calabi coordinates} $(x,y) \in [\alpha_1,\alpha_2]\times [\beta_1,\beta_2]$ if $(\mu_1=x,\mu_2=xy,s,t)$ are action-angle coordinates and there exist functions, $A\in C^{\infty}([\alpha_1,\alpha_2])$ and $B\in C^{\infty}([\beta_1,\beta_2])$, such that $A(x)$ and $B(y)$ are positive on the interior and on the interior we have
\begin{align}
g:= g_{A,B}:= x\frac{dx^2}{A(x)} + & x\frac{dy^2}{B(y)}  + \frac{A(x)}{x}(dt + yds)^2 + xB(y)ds^2
\label{CALABItoricmetric}
\end{align} and \begin{align}\label{eq:CondCompactCalabi}
\begin{split}
A(\alpha_i)=0, &\;\; B(\beta_i)=0 \\
A'(\alpha_i) =\frac{2}{C_{\alpha_i}}, &\;\; B'(\beta_i) =-\frac{2}{C_{\beta_i}}.
\end{split}
\end{align}
\end{definition} For a Calabi toric structure $(\omega,J,g_{A,B}, T,\mu)$, the inverse of the Hessian of the symplectic potential is
\begin{align} \bfH_{A,B} = \frac{1}{x}\begin{pmatrix} A(x) & yA(x)\\
yA(x) & x^2B(y) + y^2A(x)
\end{pmatrix},\label{bfHABcalabi}
\end{align} the Laplacian of the momentum map components is 
$$\Delta^{g_{A,B}}\mu_1 =\Delta^{g_{A,B}}x = -\frac{A'(x)}{x}$$ and $$\Delta^{g_{A,B}}\mu_2=\Delta^{g_{A,B}}xy= -\frac{yA'(x)+ xB'(y)}{x},$$ and the scalar curvature of $g_{A,B}$ is \begin{equation}\label{curvatureCALABI}
Scal({g_{A,B}})=-\frac{A''(x)+ B''(y)}{x}.
\end{equation}

We recall the following 
\begin{proposition} \cite{ApCaGa03} \label{calextremal} A Calabi toric structure $(\omega,J, g_{A,B}, T,\mu)$ is extremal (in the Calabi sense, i.e, $Scal({g_{A,B}})$ is affine linear in $(x,xy)$) if and only if $B$ is a polynomial of order $2$ in $y$, $A$ is a polynomial of order at most $4$ in $x$ and $B''(y)=-2A_2$ where    
    \begin{equation}\label{polynACalabi}
A(x) = A_0x^4 +A_1x^3+A_2x^2+A_3x+A_4. 
\end{equation} In this case, 
\begin{equation}\label{curvatureCALABIext}
Scal({g_{A,B}})=-12A_0x - 6A_1
\end{equation} and $(\omega,J, g_{A,B}, T,\mu)$ is K\"ahler-Einstein if and only $A_0=A_3=0$. 
\end{proposition}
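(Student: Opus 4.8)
The plan is to handle the extremal characterization and the Kähler--Einstein refinement separately, both by reducing everything to a separation-of-variables argument in the Calabi coordinates $(x,y)$.

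For the extremal part I would start from the scalar curvature formula \eqref{curvatureCALABI}, namely $Scal(g_{A,B}) = -(A''(x)+B''(y))/x$. By definition extremality means this is affine-linear in the momentum coordinates $(\mu_1,\mu_2)=(x,xy)$, i.e. $Scal = c_0 + c_1 x + c_2\,xy$ for constants $c_i$. Multiplying by $x$ gives the identity $A''(x)+B''(y) = -c_0 x - c_1 x^2 - c_2 x^2 y$ on the rectangle $[\alpha_1,\alpha_2]\times[\beta_1,\beta_2]$. The left-hand side is a sum of a function of $x$ and a function of $y$, so differentiating in $y$ yields $B'''(y) = -c_2 x^2$; since the two sides depend on disjoint variables, both must be constant, forcing $c_2 = 0$ and $B'''\equiv 0$. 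Hence $B$ is a polynomial of degree at most $2$, and feeding this back, $A''(x) = -c_1 x^2 - c_0 x - B''$ is a polynomial of degree at most $2$, so $A$ has the form \eqref{polynACalabi}. Writing $A''(x) = 12A_0 x^2 + 6A_1 x + 2A_2$ and matching the constant term of $A''+B''$ gives $B'' = -2A_2$, while $Scal = -(A''+B'')/x = -12A_0 x - 6A_1$, which is \eqref{curvatureCALABIext}. The converse is immediate substitution: if $B$ is quadratic, $A$ is quartic and $B''=-2A_2$, then the constant terms cancel and $Scal = -12A_0x-6A_1$ is affine in $x$, hence in $(x,xy)$.

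For the Kähler--Einstein part the key is to compute the Ricci form $\rho$ in these coordinates. Using \eqref{bfHABcalabi} one checks that the determinant factorizes, $\det\bfH_{A,B} = A(x)B(y)$, so that $\log\det\bfH_{A,B} = \log A(x)+\log B(y)$. For a toric Kähler metric the Ricci form is the torus-invariant closed $2$-form $\rho = -i\partial\bar\partial\log\det\bfH_{A,B}$, which can be written as $\rho=\sum_i d\nu_i\wedge d\theta_i$ with moment-map components $\nu_i = \tfrac12\,\Delta^{g_{A,B}}\mu_i$ up to additive constants; this is the standard identification of the Ricci potential of a Hamiltonian Killing field, and one can confirm the normalization by checking that its trace $2\sum_i\partial\nu_i/\partial\mu_i$ reproduces \eqref{curvatureCALABI}. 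Since $\omega = \sum_i d\mu_i\wedge d\theta_i$ and the $\theta_i$ are coordinates on the open orbit, the Einstein equation $\rho=\lambda\omega$ is equivalent to $\nu_i = \lambda\mu_i + \text{const}$ for $i=1,2$. Plugging in the Laplacians already recorded, $\Delta^{g_{A,B}}\mu_1 = -A'(x)/x$ and $\Delta^{g_{A,B}}\mu_2 = -(yA'(x)+xB'(y))/x$, the equation $-A'(x)/(2x) = \lambda x + \text{const}$ becomes $A'(x) = -2\lambda x^2 + (\text{const})\,x$, i.e. $A'$ has neither a cubic nor a constant term; comparing with $A'(x)=4A_0x^3+3A_1x^2+2A_2x+A_3$ forces exactly $A_0 = 0$ and $A_3 = 0$ (and fixes $\lambda=-\tfrac32 A_1$). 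The second equation then reduces, after substituting $A_0=A_3=0$, to $B'(y) = -2A_2\,y+\text{const}$, which is automatic from the extremality relation $B''=-2A_2$. This gives both directions, so $\rho=\lambda\omega$ holds iff $A_0=A_3=0$.

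I expect the only real obstacle to be pinning down the Ricci-form computation with the correct sign and normalization --- specifically justifying $\nu_i = \tfrac12\,\Delta^{g_{A,B}}\mu_i + \text{const}$ as the $\rho$-moment map rather than merely recovering its trace. Once that identity is secured, either by contracting the general toric expression $\rho = -i\partial\bar\partial\log\det\bfH_{A,B}$ against the torus generators or by the trace consistency check against \eqref{curvatureCALABI}, the remaining steps are the short separation-of-variables and polynomial-matching computations sketched above.
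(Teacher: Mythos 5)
Your argument is correct, and it is worth noting at the outset that the paper itself offers no proof of this proposition: it is quoted verbatim from Apostolov--Calderbank--Gauduchon \cite{ApCaGa03}, so there is nothing in-paper to compare against. Your extremal half is exactly the standard derivation: multiplying $Scal(g_{A,B})=-(A''(x)+B''(y))/x$ by $x$, separating variables to kill the $xy$-coefficient and force $B'''\equiv 0$, and then matching the constant term of $A''+B''$ to get $B''=-2A_2$ and \eqref{curvatureCALABIext} is airtight (the nondegeneracy of $[\alpha_1,\alpha_2]$ with $\alpha_1>0$ is what forces $c_2=0$, as you implicitly use). The K\"ahler--Einstein half is also correct, and you have accurately identified the one point that genuinely needs securing: the identity $\rho=\sum_i d\nu_i\wedge d\theta_i$ with $\nu_i=\tfrac12\Delta^{g_{A,B}}\mu_i$ up to constants. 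Be aware that the trace consistency check against \eqref{curvatureCALABI} only confirms the normalizing constant and cannot by itself rule out a trace-free discrepancy; the identity must come from the direct computation $\rho=-i\partial\bar\partial\log\det\bfH_{A,B}$ together with the pointwise identity $\sum_l H_{kl}\,\partial_{\mu_l}\log\det\bfH=\sum_l\partial_{\mu_l}H_{kl}$ (which follows from the total symmetry of the third derivatives of the symplectic potential). Once that is in place, your factorization $\det\bfH_{A,B}=A(x)B(y)$ and the two moment-map equations $-A'(x)/2x=\lambda x+\mathrm{const}$ and $-(yA'(x)+xB'(y))/2x=\lambda xy+\mathrm{const}$ do exactly what you claim: the first forces $A_0=A_3=0$ and $\lambda=-\tfrac32 A_1$, and the second is then automatic from $B''=-2A_2$. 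This gives a clean, self-contained proof of a statement the paper only cites.
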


\begin{remark} Note that, via the boundary conditions above, each choice of Calabi parameters $(\alpha_1,\alpha_2,\beta_1,\beta_2,C_{\alpha_1}, C_{\alpha_2}, C_{\beta_1},C_{\beta_2})$ with $C_{\beta_1}=-C_{\beta_2}$ uniquely determines $A(x)$ and $B(y)$, polynomials of degree $4$ and $2$ respectively, such that $B''(y)=-2A_2$. Moreover, in that case, $A$ and $B$ are both positive on the interior of $[\alpha_1,\alpha_2]$ and $[\beta_1,\beta_2]$, respectively \cite[Theorem 1.2]{Leg09}.    
\end{remark}

\begin{proposition} If an extremal Calabi toric structure $(\omega,J, g_{A,B}, T,\mu)$ admits a non-constant Killing potential $f>0$ such that
$(g_{A,B},1)$ and $(g_{A,B},f)$ are $4$-weighted extremal K\"ahler twins, then $f$ is of the form $f(x,xy)=\lambda+c_1x$ and $A_3\lambda =2c_1A_4$. In particular, any extremal Calabi toric structure admits at most one extremal Sasaki twin (up to a dilatation) and any K\"ahler--Einstein Calabi toric structure admits no non-trivial twin. 
\end{proposition}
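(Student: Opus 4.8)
The plan is to pin the twin condition down as a single Laurent-polynomial identity on the momentum polytope and then read off constraints coefficient by coefficient, the only genuinely geometric input entering at the very end through the positivity of the fibre polynomial $B$. Write a positive Killing potential as the general affine function $f = \lambda + c_1\mu_1 + c_2\mu_2 = \lambda + c_1 x + c_2 xy$ on $P$. Since $(\omega,J,g_{A,B},T,\mu)$ is already extremal, $Scal(g_{A,B})=-12A_0x-6A_1$ is affine by \eqref{curvatureCALABIext}, so by \eqref{SasakiExtTwinsPOl}--\eqref{eqExttoric0} (with $n=2$, whence $p=n+2=4$) the pair $(g_{A,B},1),(g_{A,B},f)$ is a $4$-weighted extremal twin precisely when
\begin{equation*}
\Phi := f^2\,Scal(g_{A,B}) - 6\,f\,\Delta^{g_{A,B}}f - 12\,|df|^2_{g_{A,B}}
\end{equation*}
is affine-linear in $(\mu_1,\mu_2)=(x,xy)$. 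First I would substitute the explicit Calabi data: the inverse Hessian \eqref{bfHABcalabi}, the Laplacians $\Delta^{g_{A,B}}\mu_1=-A'(x)/x$ and $\Delta^{g_{A,B}}\mu_2=-(yA'(x)+xB'(y))/x$, and, from Proposition \ref{calextremal}, $B''(y)=-2A_2$.

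Next I would organise $\Phi$ as a Laurent polynomial in $x$ with $y$-dependent coefficients. Writing $w:=c_1+c_2 y$ (so $f=\lambda+xw$ and $|df|^2_{g_{A,B}}=x^{-1}(A(x)w^2+x^2B(y)c_2^2)$), a direct expansion using \eqref{polynACalabi} should reveal the pleasant fact that the $x^2$ and $x^3$ coefficients cancel identically, leaving
\begin{equation*}
\Phi = \frac{C_{-1}(y)}{x} + C_0(y) + C_1(y)\,x .
\end{equation*}
Affineness in $(x,xy)$ is then equivalent to $C_{-1}\equiv 0$, $C_0$ constant in $y$, and $C_1$ affine in $y$. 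A short computation gives $C_{-1}(y)=6A_3\lambda w-12A_4w^2$ and shows the $y^2$-coefficient of $C_1$ vanishes automatically; the genuine constraints are therefore $C_{-1}\equiv 0$ together with the vanishing of the $y$-dependence of $C_0$, which force in particular $A_3c_2^2=0$ and $A_4c_2^2=0$.

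The crux is to show $c_2=0$, and here the geometry, not merely the algebra, must be used. Suppose $c_2\neq 0$. The constraints above give $A_3=A_4=0$, so $A(x)=x^2(A_0x^2+A_1x+A_2)$. Since $A(\alpha_i)=0$ with $\alpha_i>0$, the quadratic factor $Q(x):=A_0x^2+A_1x+A_2$ vanishes at $\alpha_1,\alpha_2$ and is positive on $(\alpha_1,\alpha_2)$ (because $A>0$ and $x^2>0$ there), hence opens downward; as $0<\alpha_1$ lies outside the root interval this forces $A_2=Q(0)<0$. On the other hand, extremality gives $B''(y)=-2A_2$, and $B>0$ on $(\beta_1,\beta_2)$ with $B(\beta_i)=0$ forces $B$ to be strictly concave, i.e.\ $A_2>0$ --- a contradiction. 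Hence $c_2=0$, $f=\lambda+c_1x$, and the surviving identity $C_{-1}\equiv 0$ reads $c_1(A_3\lambda-2A_4c_1)=0$; as $f$ is non-constant, $c_1\neq 0$, so $A_3\lambda=2c_1A_4$.

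For the two final assertions I would argue as follows. Since a genuine Calabi trapezoid has distinct positive roots $\alpha_1\neq\alpha_2$ of $A$, the case $A_3=A_4=0$ is excluded exactly as above, so either $A_4\neq 0$ or ($A_4=0$, $A_3\neq 0$); in each case $A_3\lambda=2c_1A_4$ fixes the projective ratio $[\lambda:c_1]$, so $f$ --- hence the associated Reeb field $\xi_f$ --- is determined up to overall scale, i.e.\ at most one extremal Sasaki twin up to a dilatation. In the K\"ahler--Einstein case Proposition \ref{calextremal} gives $A_0=A_3=0$, whence $A(x)=A_1x^3+A_2x^2+A_4$ must carry the two distinct positive roots $\alpha_1,\alpha_2$; this is impossible if $A_4=0$ (the factor $x^2$ would leave only one positive root), so $A_4\neq 0$, and $A_3\lambda=2c_1A_4$ collapses to $2c_1A_4=0$, forcing $c_1=0$ and $f$ constant --- no non-trivial twin. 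I expect the main obstacle to be purely bookkeeping: organising the expansion of $\Phi$ so that the cancellation of the $x^2,x^3$ terms is transparent and $C_{-1},C_0,C_1$ are correctly isolated; the decisive conceptual step, by contrast, is recognising that eliminating $c_2\neq 0$ rests on the sign clash $A_2>0$ versus $A_2<0$ coming respectively from the positivity of $B$ and of $A$.
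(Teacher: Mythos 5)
Your proposal is correct and follows essentially the same route as the paper: both reduce the twin condition to the affine-linearity of $f^2Scal(g_{A,B})-6f\Delta^{g_{A,B}}f-12|df|^2_{g_{A,B}}$, extract the obstruction $c_2^2(6A_3+12A_4/x)$ (you via the Laurent coefficients $C_{-1},C_0$, the paper via the $y^2$-coefficient), rule out $A_3=A_4=0$ by the identical sign clash between $A_2<0$ forced by positivity of $A=x^2Q$ and $A_2>0$ forced by the boundary conditions on $B$ through $B''=-2A_2$, and then read off $6c_1(A_3\lambda-2c_1A_4)=0$ from the $1/x$ term. The uniqueness and K\"ahler--Einstein conclusions are drawn exactly as in the paper.
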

\begin{proof}
We compute the Laplacians assuming $A$ has the form~\eqref{polynACalabi} and $B(y)=-A_2y^2 +B_1y+B_0$. We get
 \begin{align}\label{eq:lap_Calabi}
\begin{split}
\Delta^{g_{A,B}}x &= -\frac{A'(x)}{x}= -\left(4A_0x^2 +3A_1x +2A_2+ \frac{A_3}{x}\right)\\
\Delta^{g_{A,B}}xy &= - \left(4A_0yx^2 +3A_1yx +2A_2y+ \frac{A_3y}{x}\right)  +2A_2y -B_1\\
&= - \left(4A_0yx^2 +3A_1yx + \frac{A_3y}{x}+B_1\right),
\end{split}
\end{align} and writing $f(x,xy)=\lambda + c_1x+c_2xy$ we have 

\begin{align}\label{eq:normGrad_Calabi}
\begin{split}
|df|^2_{g_{A,B}} &=c_1^2  \frac{A(x)}{x}+c_2^2 \left(xB(y) + y^2\frac{A(x)}{x}\right)+ 2c_1c_2y\frac{A(x)}{x}\\
&=y^2\left(c_2^2\left(-A_2x +\frac{A(x)}{x}\right)\right) +(2c_1c_2y + c_1^2)\frac{A(x)}{x}+c_2^2x(B_1y+B_0).   
\end{split}
\end{align} Combining \eqref{curvatureCALABIext}, \eqref{eq:lap_Calabi} and \eqref{eq:normGrad_Calabi} we get that the coefficient of $y^2$ in $f^2Scal(g_{A,B}) -6f\Delta^{g_{A,B}}f -12|df|^2_{g_{A,B}}$ is 

\begin{align}\label{eq:y^2termWeightedScal}
\begin{split}
  & c_2^2x^2(-12A_0x - 6A_1)+6c_2^2( x\left(4A_0x^2 +3A_1x + \frac{A_3}{x}\right) )  - 12c_2^2\left(-A_2x +\frac{A(x)}{x}\right) \\
  &= c_2^2\left( -12A_0x^3 - 6A_1x^2+ 24 A_0x^3 +18A_1x^2 + 6A_3   - 12\left(-A_2x +\frac{A(x)}{x}\right)\right) \\
  &= c_2^2\left( +12A_0x^3  +12A_2x+ 6A_3   - 12\left(-A_2x +\frac{A(x)}{x}\right)\right)\\
  &= c_2^2\left( +12A_0x^3  +12A_2x+ 6A_3   - 12\left( A_0x^3 +A_1x^2 +A_3 +\frac{A_4}{x}\right)\right)\\
  &= c_2^2\left( +12A_0x^3 +12A_1x^2 + 6A_3   - 12A_0x^3- 12A_1x^2 - 12A_3- 12\frac{A_4}{x}\right)\\
  &= c_2^2\left( -6A_3 - 12\frac{A_4}{x}\right).
\end{split}
\end{align} We conclude that {\it if $Scal_{f,4}(g_{A,B})$ is affine linear in $(x,xy)$ then }
$$c_2= 0 \qquad \mbox{ or } \qquad A_3=A_4=0.$$  However, if the latter holds then $$A(x)= x^2(A_0x^2+A_1x+A_2)$$ which is impossible due to the boundary conditions \eqref{eq:CondCompactCalabi}: First, $A$ must have two distinct positive roots; $0<\alpha_1<\alpha_2$ and hence $A_1=-A_0(\alpha_1+\alpha_2)$ and $A_2=A_0\alpha_1\alpha_2$. 
Assuming this, 
$A'(\alpha_1)=-A_0\alpha_1^2(\alpha_2-\alpha_1)$ and $A'(\alpha_2)=A_0\alpha_2^2(\alpha_2-\alpha_1)$ and, since we need $A'(\alpha_1)>0$ and $A'(\alpha_2)<0$, we conclude that $A_0$ must be negative. Since $A_2=A_0\alpha_1\alpha_2$, this in turn forces $A_2$ to be negative and hence $B''(y)=-2A_2 >0$. However, this is not possible due to the boundary conditions on $B$. 

Therefore, if $Scal_{f,4}(g_{A,B})$ is affine linear in $(x,xy)$, then $c_2= 0$. \\ 

Now we assume that $f(x,xy)=\lambda + c_1x$, still assuming that $A(x)$ has the form~\eqref{polynACalabi}, and study the expression for the weighted scalar curvature 
\begin{align}\label{eq:Scalequipoised}
\begin{split}Scal_{f,4}(g_{A,B}) = f^2Scal(g_{A,B}) -6f\Delta^{g_{A,B}}f -12|df|^2_{g_{A,B}}\\
=(\lambda + c_1x)^2 (Scal(g_{A,B}))-6c_1(\lambda + c_1x)\Delta^{g_{A,B}}x -12c_1^2\frac{A(x)}{x}
\end{split}
\end{align} which is rational in $x$ and does not depend on $y$. We find that the coefficients of $x^2$ and of $x^3$ in \eqref{eq:Scalequipoised} are both $0$ while the coefficient of $1/x$ is \begin{equation}\label{eqfWeightCalabicdt}
    6c_1(A_3\lambda -2c_1A_4).
\end{equation} 
Note that the twins are non-colinear if and only if $c_1\neq 0$. This proves the first claim of the proposition. Moreover, we cannot have $A_3=A_4=0$ since this contradicts the boundary condition \eqref{eq:CondCompactCalabi} which implies uniqueness of the twin.

 Finally, if $(\omega,J, g_{A,B}, T,\mu)$ is K\"ahler--Einstein then $A_0=A_3=0$ and by \eqref{eqfWeightCalabicdt} we also have $A_4=0$. This would imply that $A(x)= A_1x^3 + A_2x^2$ which would again contradict the boundary condition \eqref{eq:CondCompactCalabi}.
\end{proof}

\begin{remark}
As we already know from Proposition \ref{page}, the set of extremal Calabi toric metrics, $g$, admitting a non-constant Killing potential $f>0$ such that $(g,1)$ and $(g,f)$ are $4$-weighted extremal K\"ahler twins
is certainly not empty. Below we add yet another example that is somewhat special.
\end{remark}

\begin{example}\label{exemplecscStwins}{\bf CscS twins}
Assume that the Calabi parameters are given as follows:
$$
\begin{array}{cl}&(\alpha_1,\alpha_2,\beta_1,\beta_2,C_{\alpha_1}, C_{\alpha_2}, C_{\beta_1},C_{\beta_2})\\
\\
=&(\alpha_1,\alpha_2,\beta_1,\beta_2,C, -\frac{\alpha_1^2 }{\alpha_2^2}C, -\frac{\alpha_1^2 (\alpha_2-\alpha_1)}{\left(\alpha_1^2+3 \alpha_1 \alpha_2+\alpha_2^2\right) (\beta_2-\beta_1)}C,\frac{\alpha_1^2  (\alpha_2-\alpha_1)}{\left(\alpha_1^2+3 \alpha_1 \alpha_2+\alpha_2^2\right) (\beta_2-\beta_1)}C).
\end{array}
$$
Then $A$ and $B$ from Proposition \ref{calextremal} are given explicitly by
$$A(x)=\frac{2 (x-\alpha_1) (\alpha_2-x) (\alpha_1 x+\alpha_2 x-\alpha_1 \alpha_2)}{\alpha_1^2 (\alpha_2-\alpha_1)C}$$
and 
$$B(y)=\frac{2 \left(\alpha_1^2+3 \alpha_1 \alpha_2+\alpha_2^2\right) (y-\beta_1) (\beta_2-y)}{\alpha_1^2  (\alpha_2-\alpha_1)C}.$$
Since $A$ is a cubic, we have that $Scal(g_{A,B})$ is constant. Indeed,
$$Scal(g_{A,B})=\frac{12 (\alpha_1+\alpha_2)}{\alpha_1^2  (\alpha_2-\alpha_1)C}.$$
If we assume $f(x,xy)=\lambda + c_1x$ with
$\lambda=-1$ and $c_1=\frac{\alpha_1+\alpha_2}{\alpha_1\alpha_2}$, we have that $f$ is positive for $\alpha_1\leq x \leq \alpha_2$, $g$ is $(f,4)$-extremal, and moreover
$$Scal_{f,4}(g_{A,B})=\frac{12 (\alpha_1+\alpha_2) (-\alpha_1 \alpha_2+(\alpha_1+\alpha_2) x)}{\alpha_1^3 \alpha_2  (\alpha_2-\alpha_1)C}= Scal(g_{A,B}) f.$$
As long as the Calabi parameters can occur as a choice of transverse K\"ahler structure for a Sasaki structure this gives a pair of extremal Sasaki twins which are both cscS. 

We have a lot of such examples as we now show. We want to find Calabi parameters satisfying the above and associated to a smooth Sasaki manifold. From \cite[Lemma 4.1]{Leg09}, it is not restrictive to assume that $\alpha_1=1$, $\alpha:=\alpha_2>1$, $\beta_1=0$ and $\beta:=\beta_2>0$. In that case the linear-affine function defining the Calabi polytope are $$\ell_1(u,v)= -C +Cu,\; \; \ell_2(u,v) = CF(\alpha)v, $$ $$\ell_3(u,v) = C-\frac{C}{\alpha}u, \;\; \ell_4(u,v) =CF(\alpha)u  - CF(\alpha)v  $$  where $F(\alpha):=\frac{(\alpha-1)}{\left(1+3 \alpha +\alpha^2\right)}$. By the Delzant--Lerman construction, if the labelled polytope is a transversal polytope of a smooth Sasaki manifold then the symplectic cone over it is, up to an equivariant symplectomorphism, the circle quotient of $\{ z\in \bbc^4 \backslash 0 | \sum_i k_i |z_i|^2 =0 \}$ where $k_i\in \bbz$ are such that $\sum_{i=1}^4 k_i\ell_i =0$, $gcd(k_1,k_2,k_3,k_4)=1$ and the circle action $(\gamma, z) \mapsto  (\gamma^{k_i}z_i)_{i=1,\dots, 4}$ is free. We need to find the $k_i's$. In our situation $\sum_{i=1}^4 k_i\ell_i =0$ implies that $$ -k_1C+k_3 C=0, \;\; k_2CF(\alpha) -k_4CF(\alpha)=0 $$ and $$k_1C -k_3\frac{C}{\alpha} + k_4 CF(\alpha) =0.$$ In particular $k_1=k_3$ and $k_2=k_4$ while $$k_3(1-\alpha) =k_4 \alpha F(\alpha).$$ This last equation (since $\alpha-1>0$) is equivalent to $$-k_3/k_4 = \frac{\alpha}{1+3\alpha +\alpha^2}.$$ 
Now it is easy to see that, as long as $\alpha \in \bbq\cap (1,+\infty)$, we
can choose co-prime solutions $k_3,k_4\in \bbz$. 

\end{example}

\subsection{Orthotoric}

Any generic (no parallel edges) quadrilateral is equivalent to an
{\it orthotoric quadrilateral} where
\begin{definition} \label{defnOrthoPolyt}
A {\it orthotoric quadrilateral} is a quadrilateral in $\bbr^2$ which is
the image of a rectangle $[\alpha_1,\alpha_2]\times[\beta_1,\beta_2]
\subset\bbr^2$, with $\beta_{2} < \alpha_{1}$,
by the map $\sigma : (x,y) \mapsto (x+y,xy)$. \end{definition}

Let $\Delta$ be an orthotoric quadrilateral with parameters
$\alpha_1,\alpha_2,\beta_1,\beta_2$, with $\beta_{2} < \alpha_{1}$.
The normals of $\Delta$ can be written as:
\begin{align} \label{labelortho}u_{\alpha_1}=C_{\alpha_1}\begin{pmatrix}
\alpha_{1}\\
-1\end{pmatrix}, \; u_{\alpha_2}=C_{\alpha_2}\begin{pmatrix}
\alpha_{2}\\
-1\end{pmatrix}, \;u_{\beta_1}=C_{\beta_1}\begin{pmatrix}
\beta_1\\
-1\end{pmatrix},\; u_{\beta_2}=C_{\beta_2}\begin{pmatrix}
\beta_2\\
-1\end{pmatrix}\end{align}
with $C_{\alpha_1}$, $C_{\beta_2}>0$ and $C_{\alpha_2}$, $C_{\beta_1}<0$.
Thus, any labeled orthotoric quadrilateral determines and is determined by a
$8$--tuple
$$(\alpha_1,\alpha_2,\beta_1,\beta_2,C_{\alpha_1}, C_{\alpha_2},
C_{\beta_1},C_{\beta_2})$$ which we shall refer to as {\em orthotoric parameters}.

\begin{definition} \label{defnorthotoric}
$(\omega,J,g, T,\mu)$ is an {\em orthotoric} structure with {\em orthotoric} coordinates $(x,y)\in [\alpha_1,\alpha_2]\times[\beta_1,\beta_2]$, with $\beta_2<\alpha_1$, if \newline
$(\mu_1=x+y,\mu_2=xy,s,t)$ 
are action-angle coordinates
and there exist functions, 
$A\in C^{\infty}([\alpha_1,\alpha_2])$ 
and
$B\in C^{\infty}([\beta_1,\beta_2])$, 
such that $A(x)$ and $B(y)$ are
positive in the interior and in the interior we have
\begin{align}
g= g_{A,B} := \frac{(x-y)}{A(x)}dx^2 + & \frac{(x-y)}{B(y)}dy^2
+ \frac{A(x)}{(x-y)}(dt + yds)^2 + \frac{B(y)}{(x-y)}(dt + xds)^2
\label{orthotoricmetric}
\end{align} and 
\begin{align}\label{eq:CondCompactorthotoric}
\begin{split}
A(\alpha_i)=0, &\;\; B(\beta_i)=0 \\
A'(\alpha_i) =\frac{2}{C_{\alpha_i}}, &\;\; B'(\beta_i) =-\frac{2}{C_{\beta_i}}.
\end{split}
\end{align}
\end{definition}

For an orthotoric structure $(\omega,J,g_{A,B}, T,\mu)$, the inverse of the Hessian of the symplectic potential is
\begin{align} \bfH_{A,B} = \frac{1}{x-y}\begin{pmatrix} A(x)+B(y) &
yA(x)+xB(y)\\
yA(x) +xB(y)&  y^2A(x) +x^2B(y)
\end{pmatrix},\label{bfHABorthotoric}
\end{align} 
the Laplacian of the momentum map components are 
$$\Delta^{g_{A,B}}\mu_1 =\Delta^{g_{A,B}}(x+y) = -\frac{A'(x)+B'(y)}{x-y}$$ and $$\Delta^{g_{A,B}}\mu_2=\Delta^{g_{A,B}}xy= -\frac{yA'(x)+ xB'(y)}{x-y},$$ and the scalar curvature of $g_{A,B}$ is \begin{equation}\label{curvatureOrtho}
Scal({g_{A,B}})=-\frac{A''(x)+ B''(y)}{x-y}.
\end{equation}

We recall the following 
\begin{proposition} \cite{ApCaGa03} \label{Orthoextremal} An orthotoric structure $(\omega,J, g_{A,B}, T,\mu)$ is extremal (in the Calabi sense, i.e, $Scal({g_{A,B}})$ is affine linear in $(x+y,xy)$) if and only if $A$ and $B$ are both polynomials of order at most $4$ and are related as follows:
\begin{equation}\label{polynAOrtho}
A(x) = A_0x^4 +A_1x^3+A_2x^2+A_3x+A_4, 
\end{equation} 
\begin{equation}\label{polynBOrtho}
B(y) = -A_0y^4 -A_1y^3-A_2y^2+B_3y+B_4. 
\end{equation} 
 In this case, 
\begin{equation}\label{curvatureOrthoext}
Scal({g_{A,B}})=-12A_0(x+y) - 6A_1
\end{equation} and $(\omega,J, g_{A,B}, T,\mu)$ is csc if and only if $A_0=0$. Finally,$(\omega,J, g_{A,B}, T,\mu)$ is K\"ahler-Einstein if and only $A_0=0$ and $B_3=-A_3$. 
\end{proposition}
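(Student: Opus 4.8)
The plan is to stay within the Abreu--Guillemin toric formalism and read everything off the scalar curvature formula \eqref{curvatureOrtho} and the inverse Hessian \eqref{bfHABorthotoric}, treating the extremal, csc, and K\"ahler--Einstein assertions in turn. For extremality, the Calabi condition is that $Scal(g_{A,B})=-\frac{A''(x)+B''(y)}{x-y}$ be affine-linear in the momentum coordinates $(\mu_1,\mu_2)=(x+y,xy)$, i.e. equal to $c_0+c_1(x+y)+c_2xy$ for constants $c_0,c_1,c_2$. Clearing the denominator gives
\[
-(A''(x)+B''(y)) = (x-y)\big(c_0+c_1(x+y)+c_2xy\big).
\]
Since the left-hand side is a sum of a function of $x$ and a function of $y$, applying $\partial^2/\partial x\,\partial y$ kills it, while on the right-hand side it yields $2c_2(x-y)$; hence $c_2=0$. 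The right-hand side is then $c_0(x-y)+c_1(x^2-y^2)$, and separating variables gives $A''(x)=-(c_1x^2+c_0x+k)$, $B''(y)=c_1y^2+c_0y+k$ for a separation constant $k$. Integrating twice yields precisely the forms \eqref{polynAOrtho}--\eqref{polynBOrtho} (with $A_0=-c_1/12$, $A_1=-c_0/6$, $A_2=-k/2$ and $A_3,A_4,B_3,B_4$ the integration constants), and in particular forces $A,B$ to be polynomials. Conversely, for such $A,B$ one has $A''(x)+B''(y)=(x-y)(12A_0(x+y)+6A_1)$, giving \eqref{curvatureOrthoext}; this proves the equivalence and the scalar curvature formula at once. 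The csc case is then immediate: $Scal=-12A_0(x+y)-6A_1$ is constant iff the coefficient of $\mu_1=x+y$ vanishes, i.e. iff $A_0=0$.

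For the K\"ahler--Einstein statement the key computational input is the identity $\det\bfH_{A,B}=A(x)B(y)$, obtained by expanding \eqref{bfHABorthotoric}: the cross terms cancel and the numerator collapses to $A(x)B(y)(x-y)^2$. Because the Ricci form of a toric K\"ahler metric is, up to the usual normalization, $\rho=-\tfrac12\,d\,d^c\log\det\bfH_{A,B}$, the separated form $\log\det\bfH_{A,B}=\log A(x)+\log B(y)$ makes $\rho$ explicitly computable: reading the complex structure off the pair $(g_{A,B},\omega)$ as $J^*dx=-\tfrac{A(x)}{x-y}\theta_1$, $J^*dy=-\tfrac{B(y)}{x-y}\theta_2$ with $\theta_1=dt+y\,ds$ and $\theta_2=dt+x\,ds$, one gets $d^c\log\det\bfH_{A,B}=-\tfrac{1}{x-y}\big(A'(x)\theta_1+B'(y)\theta_2\big)$, and $\rho=-\tfrac12\,d$ of this expression. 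The Einstein equation $\rho=\lambda\omega$, with $\omega=dx\wedge\theta_1+dy\wedge\theta_2$, then becomes a finite system of polynomial identities in $x,y$; its trace part forces $Scal$ to be constant, hence $A_0=0$, while the trace-free part reduces to the single relation $A_3+B_3=0$, i.e. $B_3=-A_3$.

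I expect this last Ricci computation to be the main obstacle, since it requires carefully organizing $d\big((A'(x)\theta_1+B'(y)\theta_2)/(x-y)\big)$ and matching it against $\lambda\omega$ coefficient by coefficient. A clarifying observation that makes the final bookkeeping transparent is that for any extremal orthotoric pair the sum $A(t)+B(t)=(A_3+B_3)t+(A_4+B_4)$ is automatically affine-linear, the quartic, cubic and quadratic terms cancelling identically; thus, once $A_0=0$ is imposed, the Einstein condition is exactly that $A+B$ be constant, which is precisely the relation $B_3=-A_3$.
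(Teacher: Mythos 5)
The paper offers no proof of this proposition --- it is quoted from \cite{ApCaGa03} --- so there is nothing internal to compare against; what you have written is a self-contained derivation, and it is correct. The extremality half is exactly the standard separation-of-variables argument: writing $Scal(g_{A,B})=c_0+c_1(x+y)+c_2xy$, clearing the denominator in \eqref{curvatureOrtho}, and applying $\partial^2/\partial x\partial y$ does force $c_2=0$ (since $x-y>0$ on the moment domain), and integrating the resulting separated ODEs gives precisely \eqref{polynAOrtho}--\eqref{polynBOrtho}; the converse and \eqref{curvatureOrthoext} follow by direct substitution, and the csc claim is immediate. For the K\"ahler--Einstein part, your key identity $\det\bfH_{A,B}=A(x)B(y)$ checks out (the cross terms do collapse to $A(x)B(y)(x-y)^2$ in the numerator), and the reduction you sketch is right: taking $\sigma=x\theta_1+y\theta_2-xy\,ds$ as a primitive of $\omega$, the condition $\rho=\lambda\omega$ is the closedness of $-\tfrac12 d^c\log\det\bfH_{A,B}-\lambda\sigma$, whose $dt$-component forces $A'(x)+B'(y)=2(x-y)(\lambda(x+y)+\mathrm{const})$, i.e.\ $A$ cubic ($A_0=0$) and $A'(t)+B'(t)\equiv 0$, i.e.\ $B_3=-A_3$; one should note (as your outline implicitly requires) that the $ds$-component of the closedness condition is then automatically satisfied, since $yA'(x)+xB'(y)=(x-y)(2\lambda xy-\mathrm{const})$ under these relations. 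Your closing observation that, for an extremal orthotoric pair, $A(t)+B(t)$ is automatically affine and the Einstein condition (once $A_0=0$) is exactly that $A+B$ be constant is a clean way to package the answer; the only caveat is that the sign of the Einstein constant relative to $A_1$ depends on the orientation/$d^c$ conventions you fix, but this does not affect the stated characterization.
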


\begin{remark}
From \S 3.2 of \cite{Leg09}, we know that not every choice of orthotoric parameters $(\alpha_1,\alpha_2,\beta_1,\beta_2,C_{\alpha_1}, C_{\alpha_2},
C_{\beta_1},C_{\beta_2})$ allows an extremal structure as above. 
\end{remark}

\begin{proposition} If an extremal orthotoric structure $(\omega,J, g_{A,B}, T,\mu)$ admits a non-constant Killing potential $f>0$ such that
$(g_{A,B},1)$ and $(g_{A,B},f)$ are $4$-weighted extremal K\"ahler twins, then $f$ is of the form $f(x+y,xy)=\lambda+c_1(x+y)$ where $\lambda(A_3+B_3)-2c_1(A_4+B_4)=0$
is a non-trivial condition. In particular, any extremal orthotoric structure admits at most one extremal Sasaki twin (up to a dilatation) and any K\"ahler--Einstein orthotoric structure admits no non-trivial twin. 
\end{proposition}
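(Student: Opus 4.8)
The plan is to mirror the proof of the Calabi toric case. First I would substitute the extremal polynomial forms \eqref{polynAOrtho}--\eqref{polynBOrtho}, together with the Laplacians and the inverse Hessian \eqref{bfHABorthotoric}, into the weighted scalar curvature $Scal_{f,4}(g_{A,B}) = f^2 Scal(g_{A,B}) - 6f\Delta^{g_{A,B}}f - 12|df|^2_{g_{A,B}}$, where $f(x+y,xy) = \lambda + c_1(x+y) + c_2 xy$ is the most general affine Killing potential. A short computation with \eqref{bfHABorthotoric} gives the convenient factorisation
\begin{equation*}
|df|^2_{g_{A,B}} = \frac{A(x)(c_1+c_2 y)^2 + B(y)(c_1 + c_2 x)^2}{x-y},
\end{equation*}
and likewise $\Delta^{g_{A,B}}f = -\frac{c_1(A'(x)+B'(y)) + c_2(yA'(x)+xB'(y))}{x-y}$. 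Writing $N$ for the numerator of $-6f\Delta^{g_{A,B}}f - 12|df|^2_{g_{A,B}}$ over the common denominator $(x-y)$, the twin condition---that $Scal_{f,4}(g_{A,B})$ be affine in $(x+y,xy)$---becomes the polynomial identity $(x-y)f^2 Scal(g_{A,B}) + N = (x-y)\ell$ for some affine $\ell$. Since $x=y$ lies outside the domain $[\alpha_1,\alpha_2]\times[\beta_1,\beta_2]$ this is a legitimate identity of polynomials, and it forces $(x-y)\mid N$.

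The crucial simplification comes from the extremal relation between $A$ and $B$ in Proposition \ref{Orthoextremal}: the quartic, cubic and quadratic parts of $A$ and $B$ are opposite, so that $A(t)+B(t) = (A_3+B_3)t + (A_4+B_4)$ and $A'(t)+B'(t) = A_3+B_3$. Setting $\sigma := A_3+B_3$ and $\tau := A_4+B_4$ and evaluating the divisibility condition $N(x,x)=0$, I expect $N(x,x)$ to factor as $(c_1+c_2 x)$ times a quadratic in $x$ whose vanishing yields the three scalar conditions $\sigma c_2 = 0$, $\tau c_2 = 0$ and $\sigma\lambda = 2\tau c_1$. Hence either $c_2 = 0$, giving $f = \lambda + c_1(x+y)$ together with $\lambda(A_3+B_3) = 2c_1(A_4+B_4)$, or else $c_2 \neq 0$ and $\sigma = \tau = 0$, i.e. $B = -A$ as polynomials.

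The main obstacle---and the one step that uses the geometry rather than algebra---is ruling out $B=-A$. If $B=-A$ then $A$ vanishes at the four distinct points $\beta_1 < \beta_2 < \alpha_1 < \alpha_2$; as $\deg A \le 4$ this forces $A_0 \neq 0$ and $A(t) = A_0(t-\beta_1)(t-\beta_2)(t-\alpha_1)(t-\alpha_2)$, the lower-degree cases being immediate since a polynomial of degree $<4$ cannot have four roots. A sign count then shows that the sign of $A$ is the same on $(\alpha_1,\alpha_2)$ and on $(\beta_1,\beta_2)$, so positivity of $A$ on $(\alpha_1,\alpha_2)$ and of $B=-A$ on $(\beta_1,\beta_2)$ cannot hold simultaneously, contradicting \eqref{eq:CondCompactorthotoric} and the required positivity. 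This leaves $c_2=0$ and establishes the stated form of $f$. Moreover $(A_3+B_3, A_4+B_4)\neq (0,0)$ (again because $B=-A$ is impossible), so $\lambda(A_3+B_3) - 2c_1(A_4+B_4)=0$ is a genuine, non-vacuous linear relation between $\lambda$ and $c_1$.

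Finally the two ``in particular'' assertions follow. The relation $\lambda(A_3+B_3) = 2c_1(A_4+B_4)$ is homogeneous of degree one in $(\lambda,c_1)$, so its solutions form a single line through the origin; since rescaling $(\lambda,c_1)$ corresponds to a transverse homothety of the associated Sasaki structure, there is at most one extremal Sasaki twin up to dilatation. For the K\"ahler--Einstein case, Proposition \ref{Orthoextremal} gives $A_0 = 0$ and $B_3 = -A_3$, whence $A_3+B_3 = 0$; a non-constant twin needs $c_1\neq 0$, so the relation forces $A_4+B_4=0$ as well, i.e. $B=-A$, which has just been excluded. Hence a K\"ahler--Einstein orthotoric structure admits no non-trivial twin.
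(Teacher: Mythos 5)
Your proposal is correct and follows essentially the same route as the paper: substitute the extremal polynomial forms into $Scal_{f,4}(g_{A,B})$, reduce the twin condition to the vanishing of the numerator along the diagonal $x=y$, read off the scalar constraints, and exclude $B=-A$ via the boundary/positivity conditions \eqref{eq:CondCompactorthotoric}. Your factorization of the diagonal restriction as $6(c_1+c_2x)\left(-c_2(A_3+B_3)x^2-2c_2(A_4+B_4)x+\lambda(A_3+B_3)-2c_1(A_4+B_4)\right)$ checks out and packages the argument slightly more cleanly than the paper's coefficient-by-coefficient extraction; in particular it yields both $(A_3+B_3)c_2=0$ and $(A_4+B_4)c_2=0$ before invoking the impossibility of $B=-A$, which is a tighter justification of the step where the paper asserts that $A_3+B_3=0$ "is not possible."
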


\begin{proof}
We compute the Laplacians assuming $A$ has the form~\eqref{polynAOrtho} and $B$ has the form~\eqref{polynBOrtho}. We get
\begin{align}\label{eq:lap_Ortho}
\begin{split}
\Delta^{g_{A,B}}(x+y) &= -\frac{A'(x)+B'(y)}{x-y} \\
&= -\frac{4A_0(x^3-y^3)+3A_1(x^2-y^2)+2A_2(x-y)+A_3+B_3}{x-y}\\
\Delta^{g_{A,B}}xy &= -\frac{yA'(x)+ xB'(y)}{x-y} \\
&=-\frac{4A_0xy(x^2-y^2)+3A_1xy(x-y)+A_3y+B_3x}{x-y}\\
\end{split}
\end{align} 
and writing $f(x,xy)=\lambda + c_1(x+y)+c_2xy$ we have 

\begin{align}\label{eq:normGrad_Ortho}
\begin{split}
|df|^2_{g_{A,B}} &=\frac{c_1^2\left(A(x)+B(y)\right)+c_2^2 \left(y^2A(x)+x^2B(y)\right)+ 2c_1c_2\left(yA(x)+xB(y)\right)}{x-y}\\
& =\frac{c_1^2\left(A_0(x^4-y^4)+A_1(x^3-y^3)+A_2(x^2-y^2)+A_3x++B_3 y +A_4+B_4\right)}{x-y}\\
&+\frac{c_2^2 \left(A_0x^2y^2(x^2-y^2)+A_1x^2y^2(x-y)+xy(A_3y+B_3x)+ A_4y^2+B_4x^2\right)}{x-y}\\
&+ \frac{2c_1c_2\left(A_0xy(x^3-y^3)+A_1xy(x^2-y^2)+A_2xy(x-y) +xy(A_3+B_3)+A_4y+B_4x\right)}{x-y}.
\end{split}
\end{align}
Combining \eqref{curvatureOrthoext}, \eqref{eq:lap_Ortho} and \eqref{eq:normGrad_Ortho} we get that $Scal_{f,4}(g_{A,B})=f^2Scal(g_{A,B}) -6f\Delta^{g_{A,B}}f -12|df|^2_{g_{A,B}}$ equals $g(x,y)/(x-y)$ where 
\begin{align}
    \begin{split}
      g(x,y) &= -(\lambda+c_1(x+y)+c_2x y)^2(12A_0(x+y) +6A_1)(x-y)\\
      &+6c_1(\lambda+c_1(x+y)+c_2x y)(4A_0(x^3-y^3)+3A_1(x^2-y^2)+2A_2(x-y)+A_3+B_3)\\
      &+6c_2(\lambda+c_1(x+y)+c_2x y)(4A_0xy(x^2-y^2)+3A_1xy(x-y)+A_3y+B_3x))\\
      &-12c_1^2\left(A_0(x^4-y^4)+A_1(x^3-y^3)+A_2(x^2-y^2)+A_3x+B_3 y +A_4+B_4\right)\\
      &-12c_2^2 \left(A_0x^2y^2(x^2-y^2)+A_1x^2y^2(x-y)+xy(A_3y+B_3x)+ A_4y^2+B_4x^2\right)\\
      &-24c_1c_2\left(A_0xy(x^3-y^3)+A_1xy(x^2-y^2)+A_2xy(x-y) +xy(A_3+B_3)+A_4y+B_4x\right).
    \end{split}
\end{align}
Now, in order for $Scal_{f,4}(g_{A,B})=g(x,y)/(x-y)$ to be an affine linear function in $(x+y,xy)$ we need in particular that $(x-y)$ is a factor of $g(x,y)$. In other words,
$g(x,x)$ has to vanish for all $x$. Now we calculate that
\begin{align}\label{gThatHasToVanishForAllx}
    \begin{split}
      g(x,x) &= 6c_1(\lambda+2c_1x+c_2x^2)(A_3+B_3)\\
      &+6c_2(\lambda+2c_1x+c_2x^2)(A_3+B_3)x\\
      &-12c_1^2\left((A_3+B_3)x +A_4+B_4\right)\\
      &-12c_2^2 \left((A_3+B_3)x^3+ (A_4+B_4)x^2\right)\\
      &-24c_1c_2\left((A_3+B_3)x^2+(A_4+B_4)x\right).
    \end{split}
\end{align}
If we assume that $g_{A,B}$ is K\"ahler-Einstein (so in particular $B_3=-A_3$) this simplifies further to $g(x,x)=-12(A_4+B_4)\left(c_1^2+2c_1c_2x+c_2^2 x^2\right)=-12(A_4+B_4)(c_1+c_2 x)^2$. Since $A_4+B_4=0$ would result in 
$A(t)=-B(t)$ as $4$-degree polynomials, a moment's thought gives that this is just not possible to reconcile with \eqref{eq:CondCompactorthotoric} (since $A(t)$ would have roots at $\beta_1, \beta_2, \alpha_1, \alpha_2$ and also have an impossible fifth root between $\beta_2$ and $\alpha_1$). Therefore, in the K\"ahler-Einstein case, we must have $c_1=c_2=0$ and hence $f$ is just a constant.

Moving forward, we now notice that the coefficient of $x^3$ equals $-6c_2^2(A_3+B_3)$. Since we just saw that $A_3+B_3=0$ is not possible, we then see that we need $c_2=0$ and hence we must have $f=\lambda+c_1(x+y)$. Then
\eqref{gThatHasToVanishForAllx} simplyfies to
\begin{align}\label{gThatHasToVanishForAllx2}
    \begin{split}
      g(x,x) &= 6c_1(\lambda+2c_1x)(A_3+B_3)\\
      &-12c_1^2\left((A_3+B_3)x +A_4+B_4\right)\\
      &= 6c_1(\lambda(A_3+B_3)-2c_1(A_4+B_4)).
    \end{split}
\end{align}
In the non-trivial case ($c_1\neq 0$) we then get the necessary equation
$\lambda(A_3+B_3)-2c_1(A_4+B_4)=0$. Since the above discussion showed that
we cannot have $(A_3+B_3)=(A_4+B_4)=0$, the equation $\lambda(A_3+B_3)-2c_1(A_4+B_4)=0$ is never trivial, and thus, up to an overall re-scale, there is at most one choice of non-trivial $f$ making $Scal_{f,4}(g_{A,B})$ an affine function in $(x+y,xy)$. 
\end{proof}

\begin{remark}
It is not hard (albeit a bit tedious) to check that if $\lambda(A_3+B_3)-2c_1(A_4+B_4))=0$
is satisfied then $g(x,y)/(x-y)$ is indeed an affine linear function in
$(x+y,xy)$ as desired. So assuming we have chosen our orthotoric parameters $(\alpha_1,\alpha_2,\beta_1,\beta_2,C_{\alpha_1}, C_{\alpha_2},
C_{\beta_1},C_{\beta_2})$ in a way that we have an orthotoric extremal structure
$g_{A,B}$ and assuming that $f=\lambda+c_1(x+y)$ with $\lambda(A_3+B_3)-2c_1(A_4+B_4)=0$ is such that it is positive
for $\alpha_1\leq x\leq \alpha_2$, $\beta_1\leq y \leq \beta_2$, then we have an actual example.
\end{remark}

\subsection{Product Toric}
Any parallelogram is equivalent (by an affine transformation) to a rectangle
$[\alpha_1,\alpha_2]\times[\beta_1,\beta_2]\subset\bbr^2$, with
$\alpha_2>\alpha_1$ and $\beta_2>\beta_1$.

Let $\Delta$ be the rectangle $[\alpha_1,\alpha_2]\times[\beta_1,\beta_2]\subset\bbr^2$. The normals of $\Delta$ can be written as:
\begin{align} \label{labelParallelogram}u_{\alpha_1}=C_{\alpha_1}\begin{pmatrix}
1\\
0\end{pmatrix}, \; u_{\alpha_2}=C_{\alpha_2}\begin{pmatrix}
1\\
0\end{pmatrix}, \;u_{\beta_1}=C_{\beta_1}\begin{pmatrix}
0\\
-1\end{pmatrix},\; u_{\beta_2}=C_{\beta_2}\begin{pmatrix}
0\\
-1\end{pmatrix}\end{align}
with $C_{\alpha_1}$, $C_{\beta_2}>0$ and $C_{\alpha_2}$, $C_{\beta_1}<0$.
Thus, any labeled rectangle determines and is determined by an
$8$--tuple
$(\alpha_1,\alpha_2,\beta_1,\beta_2,C_{\alpha_1}, C_{\alpha_2},
C_{\beta_1},C_{\beta_2})$ which we shall refer to as {\em product toric parameters}.

\begin{definition} \label{defnparallel}
The quintuple $(\omega,J,g, T,\mu)$ is a {\em product toric} structure with {\em product toric} coordinates $(x,y)\in [\alpha_1,\alpha_2]\times[\beta_1,\beta_2]$ if \newline
$(\mu_1=x,\mu_2=y,s,t)$ 
are action-angle coordinates
and there exist functions, 
$A\in C^{\infty}([\alpha_1,\alpha_2])$ 
and
$B\in C^{\infty}([\beta_1,\beta_2])$, 
such that $A(x)$ and $B(y)$ are
positive in the interior and in the interior
\begin{align}
g= g_{A,B} := \frac{dx^2}{A(x)} + & \frac{dy^2}{B(y)}  + A(x)dt^2 + B(y)ds^2 \label{PRODtoricmetric}
\end{align} and 
\begin{align}\label{eq:CondCompactPRODtoric}
\begin{split}
A(\alpha_i)=0, &\;\; B(\beta_i)=0 \\
A'(\alpha_i) =\frac{2}{C_{\alpha_i}}, &\;\; B'(\beta_i) =-\frac{2}{C_{\beta_i}}.
\end{split}
\end{align}
\end{definition}

For a product toric structure $(\omega,J,g, T,\mu)$, it is straightforward to see that
the Laplacian of the momentum map components are 
$$\Delta^{g_{A,B}}\mu_1 =\Delta^{g_{A,B}}x = -A'(x)$$ and $$\Delta^{g_{A,B}}\mu_2=\Delta^{g_{A,B}}y= -B'(y),$$ and the scalar curvature of $g_{A,B}$ is 
\begin{equation}\label{curvaturePROD}
Scal({g_{A,B}})=-(A''(x)+ B''(y)).
\end{equation}

We can now state the following simple companion to Propositions \ref{calextremal} and \ref{Orthoextremal}:
\begin{proposition} \cite{ApCaGa03} \label{PRODextremal} A product toric structure $(\omega,J, g_{A,B}, T,\mu)$ is extremal (in the Calabi sense, i.e, $Scal({g_{A,B}})$ is affine linear in $(x,y)$) if and only if $A$ and $B$ are both polynomials of order at most $3$:
\begin{equation}\label{polynAPROD}
A(x) = A_0x^3+A_1x^2+A_2x+A_3. 
\end{equation} 
\begin{equation}\label{polynBPROD}
B(y) = B_0y^3+B_1y^2+B_2y+B_3. 
\end{equation} 
 In that case, 
\begin{equation}\label{curvaturePRODext}
Scal({g_{A,B}})=-6(A_0x+B_0y) - 2(A_1+B_1)
\end{equation} and $(\omega,J, g_{A,B}, T,\mu)$ has constant scalar curvature if and only if $A_0=B_0=0$. 
\end{proposition}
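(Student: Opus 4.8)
The plan is to derive the equivalence directly from the scalar curvature formula \eqref{curvaturePROD}, exploiting that its two summands $A''(x)$ and $B''(y)$ depend on disjoint sets of the momentum coordinates. Recall that in this toric setting ``extremal'' means precisely that $Scal(g_{A,B})$ is affine linear in $(x,y)$, i.e.\ of the form $c_0 + c_1 x + c_2 y$ for constants $c_0,c_1,c_2$.

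First I would set $\phi(x) := -A''(x)$ and $\psi(y) := -B''(y)$, so that \eqref{curvaturePROD} reads $Scal(g_{A,B}) = \phi(x) + \psi(y)$. For the forward implication, suppose this sum equals $c_0 + c_1 x + c_2 y$. Freezing $y$ at a fixed value $y_0$ forces $\phi(x) = c_1 x + (c_0 + c_2 y_0 - \psi(y_0))$, so $\phi$ is affine in $x$; by symmetry $\psi$ is affine in $y$. Unwinding the definitions, $A''(x)$ is affine in $x$ and $B''(y)$ is affine in $y$, which is exactly the statement that $A$ and $B$ are polynomials of degree at most $3$, i.e.\ of the form \eqref{polynAPROD} and \eqref{polynBPROD}. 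For the converse I would simply differentiate: if $A$ and $B$ have the stated form, then $A''(x) = 6A_0 x + 2A_1$ and $B''(y) = 6B_0 y + 2B_1$, and substituting into \eqref{curvaturePROD} yields the displayed formula \eqref{curvaturePRODext}. The constant scalar curvature criterion then follows at once, since $Scal$ is constant precisely when the coefficients of $x$ and $y$ in \eqref{curvaturePRODext} vanish, that is, when $A_0 = B_0 = 0$.

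There is essentially no obstacle here: the proposition is a direct consequence of the separation of variables already present in the curvature formula, and the only point worth isolating is the elementary observation that a sum $\phi(x)+\psi(y)$ which is affine in $(x,y)$ must have each summand individually affine—a one-line argument via freezing one variable. Since the result is recorded from \cite{ApCaGa03}, the entire substance lies in having derived \eqref{curvaturePROD} for the product toric metric \eqref{PRODtoricmetric}; granting that, the characterization and its \emph{csc} specialization are routine.
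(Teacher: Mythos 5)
Your proposal is correct. The paper records this proposition from \cite{ApCaGa03} without supplying a proof, and your separation-of-variables argument --- freezing one variable to conclude that each summand of $-(A''(x)+B''(y))$ in \eqref{curvaturePROD} is individually affine, hence $A$ and $B$ are polynomials of degree at most $3$, with the converse and the csc criterion following by direct substitution --- is the standard and complete way to establish it, granting the curvature formula \eqref{curvaturePROD} which the paper itself takes as given.
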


\begin{proposition}\label{extprodprop}
If an extremal product toric structure $(\omega,J, g_{A,B}, T,\mu)$ admits a non-constant Killing potential $f>0$ such that
$(g_{A,B},1)$ and $(g_{A,B},f)$ are $4$-weighted extremal K\"ahler twins, then $f$ is of the form $f(x,y)=\lambda+c_1 x + c_2 y$ where $A_0c_2=B_0c_1$ and
\begin{itemize}
    \item if $c_1\neq 0$ (and hence $A_0\neq 0$),
then $3A_0 \lambda =c_1(A_1+B_1)$
    \item if $c_2\neq 0$, (and hence $B_0\neq 0$)
 then $3B_0\lambda  =c_2(A_1+B_1)$.    
\end{itemize}
In particular, any extremal product toric structure admits at most one extremal Sasaki twin (up to a dilatation) and any csc product toric structure admits no non-trivial twin. 
\end{proposition}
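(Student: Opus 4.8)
The plan is to mirror the strategy used above for the Calabi and orthotoric cases: write $Scal_{f,4}(g_{A,B})$ as an explicit polynomial in the momentum coordinates $(x,y)$ and impose affine-linearity by killing every coefficient of a monomial of total degree $\geq 2$. Since $f$ is a positive Killing potential it is affine in the momenta, so I would set $f(x,y)=\lambda+c_1x+c_2y$. The product structure makes all the relevant operators diagonal: from the inverse Hessian $\bfH_{A,B}=\mathrm{diag}(A(x),B(y))$ one reads off $|df|^2_{g_{A,B}}=c_1^2A(x)+c_2^2B(y)$ and, using Proposition~\ref{PRODextremal}, $\Delta^{g_{A,B}}f=-c_1A'(x)-c_2B'(y)$ with $Scal(g_{A,B})$ given by \eqref{curvaturePRODext}. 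Substituting these into $Scal_{f,4}(g_{A,B})=f^2Scal(g_{A,B})-6f\Delta^{g_{A,B}}f-12|df|^2_{g_{A,B}}$ produces a genuine polynomial in $(x,y)$ (no denominators appear here, unlike in the Calabi and orthotoric cases), which makes this the cleanest of the three computations.

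First I would collect the degree-three part. One finds that the $x^3$ and $y^3$ coefficients cancel identically (the three summands contribute $-6A_0c_1^2$, $+18A_0c_1^2$, $-12A_0c_1^2$, which sum to zero, and symmetrically for $y^3$), while the $x^2y$ and $xy^2$ coefficients are proportional to $c_1(A_0c_2-B_0c_1)$ and $c_2(A_0c_2-B_0c_1)$ respectively. Since $f$ is non-constant, $(c_1,c_2)\neq(0,0)$, so both vanish iff $A_0c_2=B_0c_1$, the first asserted relation. Next I would collect the degree-two part: the $x^2$-coefficient equals $2c_1\bigl(3\lambda A_0-(A_1+B_1)c_1\bigr)$ and the $y^2$-coefficient equals $2c_2\bigl(3\lambda B_0-(A_1+B_1)c_2\bigr)$, giving exactly the two conditional relations of the statement, while a direct substitution shows that the $xy$-coefficient, namely $-12\lambda(B_0c_1+A_0c_2)+8(A_1+B_1)c_1c_2$, vanishes automatically once the relations already found are imposed (using non-constancy of $f$). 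This establishes the displayed form of $f$ together with all its constraints.

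Finally I would deduce the ``in particular'' statements. For uniqueness up to dilatation, observe that the three relations are linear and homogeneous in $(\lambda,c_1,c_2)$: in the non-csc case $(A_0,B_0)\neq(0,0)$ they cut out a one-dimensional solution space (if, say, $A_0\neq0$ then $A_0c_2=B_0c_1$ fixes $c_2$ in terms of $c_1$ and $3\lambda A_0=(A_1+B_1)c_1$ fixes $\lambda$, so $f$ is determined up to scale), which is exactly the assertion that there is at most one twin up to a dilatation. For the csc case $A_0=B_0=0$ the relation $3\lambda A_0=(A_1+B_1)c_1$ degenerates to $(A_1+B_1)c_1=0$ and likewise $(A_1+B_1)c_2=0$, so a non-constant $f$ would force $A_1+B_1=0$; but the boundary conditions \eqref{eq:CondCompactPRODtoric} require $A$ and $B$ to be downward-opening quadratics (each vanishes at both endpoints, is positive between them, with $A'(\alpha_1)>0>A'(\alpha_2)$), whence $A_1<0$ and $B_1<0$, so $A_1+B_1<0\neq0$, a contradiction; hence no non-trivial twin exists. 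The only real obstacle is the disciplined bookkeeping of the degree-two and degree-three coefficients; everything downstream is elementary linear algebra and the sign analysis of the compactness conditions.
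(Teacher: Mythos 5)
Your proposal is correct and follows essentially the same route as the paper's proof: expand $Scal_{f,4}(g_{A,B})$ as a polynomial in $(x,y)$, kill the coefficients of $x^2y$, $xy^2$, $x^2$, $y^2$, $xy$ to obtain exactly the stated relations (your coefficient computations agree with the paper's), and handle the csc case by noting that $A_1+B_1=0$ is incompatible with the boundary conditions forcing both quadratics to be concave down. The only cosmetic differences are that you check the cancellation of the $x^3,y^3$ terms explicitly and observe that the $xy$-equation is implied by the others, whereas the paper simply records the full system and reduces it.
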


\begin{proof}
We compute the Laplacians assuming $A$ has the form~\eqref{polynAPROD} and $B$ has the form~\eqref{polynBPROD}. We get
\begin{align}\label{eq:lap_PROD}
\begin{split}
\Delta^{g_{A,B}}x &= -A'(x)= -3A_0 x^2-2A_1 x-A_2\\
\Delta^{g_{A,B}}y &= -B'(x)= -3B_0 y^2-2B_1 y-B_2\\
\end{split}
\end{align} 
and writing $f(x,xy)=\lambda + c_1 x+c_2y$ we have 
\begin{align}\label{eq:normGrad_PROD}
\begin{split}
|df|^2_{g_{A,B}} &= c_1^2 A(x)+c_2^2 B(x)\\
&= c_1^2\left(A_0x^3+A_1x^2+A_2x+A_3\right)+c_2^2\left(B_0y^3+B_1y^2+B_2y+B_3\right).
\end{split}
\end{align}

Combining \eqref{curvaturePRODext}, \eqref{eq:lap_PROD} and \eqref{eq:normGrad_PROD} we get that $Scal_{f,4}(g_{A,B})=f^2Scal(g_{A,B}) -6f\Delta^{g_{A,B}}f -12|df|^2_{g_{A,B}}$ equals 
$g(x,y)$ where
\begin{align}
    \begin{split}
      g(x,y) &= -(\lambda+c_1x+c_2 y)^2(-6(A_0x+B_0y) - 2(A_1+B_1))\\
      &+6c_1(\lambda+c_1x+c_2 y)(3A_0 x^2+2A_1 x+A_2)\\
      &+6c_2(\lambda+c_1x+c_2 y)(3B_0 y^2+2B_1 y+B_2)\\
      &-12c_1^2\left(A_0x^3+A_1x^2+A_2x+A_3\right)\\
      &-12c_2^2 \left(B_0y^3+B_1y^2+B_2y+B_3\right)\\
      &= -2 (\lambda^2(A_1+B_1) - 3 \lambda(A_2 c_1 + B_2  c_2) + 6 (A_3 c_1^2 + 
   B_3 c_2^2))\\
   &-2 (3 A_0 \lambda^2 - 4 A_1 \lambda c_1 + 2 B_1 \lambda c_1 + 3 A_2 c_1^2 - 3 B_2 c_1 c_2) x\\
   &-2 (3 B_0 \lambda^2 - 4 B_1 \lambda c_2 + 2 A_1 \lambda c_2 + 3 B_2 c_2^2 - 3 A_2 c_1 c_2) y\\
   &+2 c_1 (3 A_0 \lambda - c_1(A_1+B_1)) x^2\\
   &+2 c_2 (3 B_0 \lambda - c_2(A_1+B_1)) y^2\\
   &-4 (3 \lambda( B_0  c_1 +  A_0  c_2) - 2 (A_1+B_1) c_1 c_2) x y \\
   &+6 c_1 (A_0 c_2-B_0 c_1) x^2 y\\
   &+6 c_2 (B_0 c_1-A_0 c_2) x y^2 .
    \end{split}
\end{align}
This is affine linear in $(x,y)$ if and only if the coefficients of
$x^2, y^2, xy, x^2y$ and $xy^2$ all vanish.
If we are in the csc case ($A_0=B_0=0$) then, looking at the coefficients of
$x^2$ and $y^2$, we need
$$c_1^2(A_1+B_1)=0\quad\text{and}\quad c_2^2(A_1+B_1)=0.$$
In the non-trivial case where at least one of $c_1,c_2$ does not vanish, this gives $A_1=-B_1$ and since $A_0=B_0$ this would mean that one of $A(x)$ and $B(y)$ is a concave up parabola or both $A(x)$ and $B(y)$ are linear. Neither is compatible with \eqref{eq:CondCompactPRODtoric}. This proves the last part of the proposition.

Moving forward we assume we are in the non-trivial case, i.e., at least one of $c_1$ and $c_2$ is non-zero. In that case, $g(x,y)$ is affine linear in $(x,y)$, if and only if
\begin{align}\label{solvingPROD}
     \begin{split}
 A_0 c_2-B_0 c_1&=0\\
3 \lambda( B_0  c_1 +  A_0  c_2) - 2 (A_1+B_1) c_1 c_2&=0\\
c_1 (3 A_0 \lambda - c_1(A_1+B_1))&=0\\
c_2 (3 B_0 \lambda - c_2(A_1+B_1))&=0    
     \end{split}
\end{align}

Assume that we choose $c_1\neq 0$. Then the first equation in
\eqref{solvingPROD} together with the fact that not both $A_0$ and $B_0$ can be zero, tells us that $A_0\neq 0$. Further, the system \eqref{solvingPROD} is
equivalent to the non-trivial system
\begin{align}\label{solvingPROD2}
     \begin{split}
 A_0c_2&=B_0c_1\\
3A_0 \lambda &=c_1(A_1+B_1).\\    
     \end{split}
\end{align}

Likewise, if $c_2\neq 0$, then $B_0\neq 0$ and 
the system \eqref{solvingPROD} is
equivalent to
\begin{align}\label{solvingPROD3}
     \begin{split}
  A_0c_2&=B_0c_1\\
3B_0\lambda  &=c_2(A_1+B_1).\\    
     \end{split}
\end{align}
This proves the first part of the proposition.
\end{proof}

The last part of Proposition \ref{extprodprop} gives

\begin{corollary}\label{SEonS3xS3hasnotwin}
    The regular csc Sasaki structure arising from the Boothby-Wang construction over a product csc K\"ahler metric on $\bbc\bbp^1\times \bbc\bbp^1$ has no
    extremal Sasaki twin. 
\end{corollary}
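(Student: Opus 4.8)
The plan is to read the statement through the Boothby--Wang dictionary and then quote the final sentence of Proposition~\ref{extprodprop}. First I would observe that a product csc K\"ahler metric on $\bbc\bbp^1\times\bbc\bbp^1$ is exactly a csc \emph{product toric} structure $(\omega,J,g_{A,B},T_{\mathrm{std}},\mu)$ in the sense of Definition~\ref{defnparallel}: the base is a complex surface, so $n=2$, it carries the maximal rank-$2$ torus with rectangular Delzant polytope, and the csc condition forces $A_0=B_0=0$ by Proposition~\ref{PRODextremal}. After an inconsequential rescaling of the K\"ahler class to a primitive integer class (which preserves csc), the Boothby--Wang construction produces a regular toric Sasaki structure $(\cald,J,\xi_0)$ of dimension $5$; since $g_{A,B}$ is cscK, $(\cald,J,\xi_0)$ is cscS and hence extremal.

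Next I would convert a hypothetical extremal Sasaki twin into the weighted K\"ahler language. Suppose $(\cald,J,\xi_1)$ is such a twin, so $\xi_1$ commutes with $\xi_0$ and is non-colinear to it. Since $\xi_0$ and $\xi_1$ commute, they lie in a common maximal torus $T\subset\mathrm{CR}(\cald,J)$, which for a toric CR structure has rank $n+1=3$. As $T$ fixes $\cald$, $J$ and $\xi_0$, it also fixes $\eta_0$ (determined by $\cald=\ker\eta_0$, $\eta_0(\xi_0)=1$) and hence the Sasaki metric $g_0$; thus $(\cald,J,\xi_0)$ is a full toric structure for $T$, and since any maximal torus of the isometry group of $g_0$ is conjugate to $T_{\mathrm{std}}$ the transverse metric is product toric for $T$ with rectangular moment polytope. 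Writing $f:=\eta_0(\xi_1)$, this is a non-constant positive affine function on $P$, i.e.\ a positive Killing potential that is not a constant rescale of $1$. Because $p=n+2=4$, the correspondence recalled in \S\ref{intro} and \S\ref{sasakibackground} identifies the extremality of $(\cald,J,\xi_1)$ with the assertion that $g_0$ is $(f,4)$-extremal, that is, with $(g_0,1)$ and $(g_0,f)$ being a pair of $4$-weighted extremal K\"ahler twins.

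Finally I would invoke the last part of Proposition~\ref{extprodprop}: a csc product toric structure admits no non-trivial twin, so no such non-constant $f$ exists. This contradicts the construction of $f$ above and shows that $(\cald,J,\xi_0)$ has no extremal Sasaki twin. I expect the only delicate point to be the Lie-theoretic reduction of the second paragraph, namely arguing that the two commuting Reeb fields sit inside a single rank-$3$ torus for which the original csc structure is still \emph{product} toric with a rectangular polytope; everything else is either a direct translation through the Boothby--Wang correspondence or a citation of Proposition~\ref{extprodprop}.
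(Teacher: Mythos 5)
Your proposal is correct and takes essentially the same route as the paper: the paper derives the corollary in one line as an immediate consequence of the last part of Proposition~\ref{extprodprop}, leaving implicit the Boothby--Wang translation and the conjugation of the torus generated by the two commuting Reeb fields into the standard maximal torus, which you spell out explicitly. The ``delicate point'' you flag is handled correctly and is exactly the step the paper suppresses.
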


\def\cprime{$'$} \def\cprime{$'$} \def\cprime{$'$} \def\cprime{$'$}
  \def\cprime{$'$} \def\cprime{$'$} \def\cprime{$'$} \def\cdprime{$''$}
  \def\cprime{$'$} \def\cprime{$'$} \def\cprime{$'$} \def\cprime{$'$}
\providecommand{\bysame}{\leavevmode\hbox to3em{\hrulefill}\thinspace}
\providecommand{\MR}{\relax\ifhmode\unskip\space\fi MR }
\providecommand{\MRhref}[2]{%
  \href{http://www.ams.org/mathscinet-getitem?mr=#1}{#2}
}
\providecommand{\href}[2]{#2}

\end{document}